\begin{document}
\newcommand {\emptycomment}[1]{} %to remove paragraphs

\newcommand{\tabincell}[2]{\begin{tabular}{@{}#1@{}}#2\end{tabular}}

\newcommand{\nc}{\newcommand}
\newcommand{\delete}[1]{}

%%%%%%Use the next black to suppress labels
%\delete{
\nc{\mlabel}[1]{\label{#1}}  % Use this to suppress names
\nc{\mcite}[1]{\cite{#1}}  % Use this to suppress names
\nc{\mref}[1]{\ref{#1}}  % Use this to suppress names
\nc{\meqref}[1]{~\eqref{#1}} % Use this to suppress names
\nc{\mbibitem}[1]{\bibitem{#1}} % Use this to show number
%}

%%%%%%%%%%%Use the next block to show labels
\delete{
\nc{\mlabel}[1]{\label{#1} {{\tiny\tt (#1)}}\ }  % Use this line to show names
\nc{\mcite}[1]{\cite{#1} {{\tiny\tt (#1)}}\ }  % Use this lines to show names
\nc{\mref}[1]{\ref{#1}{{\tiny\tt (#1)}}\ }  % Use this lines to show names
\nc{\meqref}[1]{~\eqref{#1}{{\tiny\tt (#1)}}\ } % Use this lines to show names
\nc{\mbibitem}[1]{\bibitem[\bf #1]{#1}} % Use this to show name
}

%%%%%%%%%%%%%%%%%%%%%%%% Statements
\newtheorem{thm}{Theorem}[section]
\newtheorem{lem}[thm]{Lemma}
\newtheorem{cor}[thm]{Corollary}
\newtheorem{pro}[thm]{Proposition}
\newtheorem{conj}[thm]{Conjecture}
\theoremstyle{definition}
\newtheorem{defi}[thm]{Definition}
\newtheorem{ex}[thm]{Example}
\newtheorem{rmk}[thm]{Remark}
\newtheorem{pdef}[thm]{Proposition-Definition}
\newtheorem{condition}[thm]{Condition}

\renewcommand{\labelenumi}{{\rm(\alph{enumi})}}
\renewcommand{\theenumi}{\alph{enumi}}
\renewcommand{\labelenumii}{{\rm(\roman{enumii})}}
\renewcommand{\theenumii}{\roman{enumii}}

\nc{\tred}[1]{\textcolor{red}{#1}}
\nc{\tblue}[1]{\textcolor{blue}{#1}}
\nc{\tgreen}[1]{\textcolor{green}{#1}}
\nc{\tpurple}[1]{\textcolor{purple}{#1}}
\nc{\btred}[1]{\textcolor{red}{\bf #1}}
\nc{\btblue}[1]{\textcolor{blue}{\bf #1}}
\nc{\btgreen}[1]{\textcolor{green}{\bf #1}}
\nc{\btpurple}[1]{\textcolor{purple}{\bf #1}}

\nc{\vsa}{\vspace{-.1cm}}
\nc{\vsb}{\vspace{-.2cm}}
\nc{\vsc}{\vspace{-.3cm}}
\nc{\vsd}{\vspace{-.4cm}}
\nc{\vse}{\vspace{-.5cm}}

%new commands

\newcommand{\End}{\text{End}}

\nc{\calb}{\mathcal{B}}
\nc{\call}{\mathcal{L}}
\nc{\calo}{\mathcal{O}}
\nc{\calp}{\mathcal{P}}
\nc{\frakg}{\mathfrak{g}}
\nc{\frakh}{\mathfrak{h}}
\nc{\ad}{\mathrm{ad}}

\nc{\ccred}[1]{\tred{\textcircled{#1}}}

\nc{\ddop}{dual a-$\mathcal{O}$-operator\xspace}

\nc{\dop}{a-$\mathcal{O}$-operator\xspace} \nc{\sdpp}{special
apre-perm algebra\xspace} \nc{\sdpps}{special apre-perm
algebras\xspace} \nc{\sdppb}{special apre-perm bialgebra\xspace}

\nc{\sdppbs}{special apre-perm bialgebras\xspace}
\nc{\dpp}{apre-perm algebra\xspace} \nc{\dpps}{apre-perm
algebras\xspace} \nc{\da}{APP algebra\xspace} \nc{\das}{APP
algebra\xspace} \nc{\sa}{SAPP algebra\xspace} \nc{\sas}{SAPP
algebras\xspace}

\nc{\sdppsubs}{special apre-perm subalgebras\xspace}

%\nc{\move}[1]{\footnote{#1}}

\newcommand{\cm}[1]{\textcolor{red}{\underline{CM:}#1 }}
\newcommand{\gl}[1]{\textcolor{blue}{\underline{GL:}#1 }}
\nc{\li}[1]{\textcolor{purple}{#1}}
\nc{\lir}[1]{\textcolor{purple}{Li: #1}}

%%%%%%%%%%%%%%%%%%%%%%%%%%%%%%%%%%%%%%%%%%%%%%%%%%%%%%%%%%%%%%%%%%
\title[Induced structures of averaging infinitesimal bialgebras]{Induced structures of averaging commutative and cocommutative infinitesimal bialgebras via a new splitting of perm
algebras}

    \author{Chengming Bai}
    \address{Chern Institute of Mathematics \& LPMC, Nankai University, Tianjin 300071, China}
    \email{baicm@nankai.edu.cn}

    \author{Li Guo}
    \address{Department of Mathematics and Computer Science, Rutgers University, Newark, NJ 07102, USA}
    \email{liguo@rutgers.edu}

    \author{Guilai Liu}
    \address{Chern Institute of Mathematics \& LPMC, Nankai University, Tianjin 300071, China}
    \email{liugl@mail.nankai.edu.cn}

    \author{Quan Zhao}
    \address{Chern Institute of Mathematics \& LPMC, Nankai University, Tianjin 300071, China}
    \email{zhaoquan@mail.nankai.edu.cn}

\date{\today}%

\begin{abstract}
It is well-known that an averaging operator on a commutative associative algebra gives rise to a perm algebra. This paper lifts this process to the level of bialgebras. For this purpose, we first give an infinitesimal bialgebra structure for averaging commutative associative algebras and characterize it by double constructions of averaging Frobenius commutative algebras. To find the bialgebra counterpart of perm algebras that is induced by such averaging bialgebras,
we need a new two-part splitting of the multiplication in a perm algebra, which differs from the usual splitting of the perm algebra (into the pre-perm algebra) by the characterized representation.
This gives rise to the notion of an averaging-pre-perm algebra, or simply an apre-perm algebra.
Furthermore, the notion of special apre-perm algebras which are apre-perm algebras with the second
multiplications being commutative is introduced as the underlying
algebra structure of perm algebras with
nondegenerate symmetric left-invariant bilinear forms. The latter are
also the induced structures of symmetric Frobenius commutative
algebras with averaging operators. Consequently, a double
construction of averaging Frobenius commutative algebra gives rise
to a Manin triple of special apre-perm algebras. In terms of bialgebra structures, this means that an
averaging commutative and cocommutative infinitesimal bialgebra
gives rise to a special apre-perm bialgebra.
\end{abstract}

\subjclass[2020]{
    17A36,  %Automorphisms, derivations, other operators (nonassociative rings and algebras)
    17A40,  %Ternary compositions
    %18M70,  %Algebraic operads, cooperads, and Koszul duality
    %%%%17B:Lie algebras and Lie superalgebras
    17B10, %Representations, algebraic theory
    % 17B40, %Automorphisms,derivations,other operators for Lie algebras and super algebras
    %    17B60, %Lie (super)algebras associated with other structures (associative, Jordan, ect.)
    %    17B62, %Lie bialgebras; Lie coalgebras
    %17B63,  %Poisson algebras
    17D25,  %Lie-admissible algebras
    %37J39,   %Relations of finite-dimensional Hamiltonian and Lagrangian systems with topology, geometry and differential geometry (symplectic geometry, Poisson geometry, etc.
    %53D17  %Poisson manifolds; Poisson groupoids and algebroids
    18M70.  %Algebraic operads, cooperads, and Koszul duality
}

\keywords{Averaging operator, perm algebra, bialgebra, $\calo$-operator, embedding tensor}

\maketitle

\vspace{-1.3cm}

\tableofcontents

\vspace{-.5cm}

\allowdisplaybreaks

\section{Introduction}
We enhance the well-known connection that an averaging commutative associative algebra induces a perm algebra to the context of bialgebras, with the induced bialgebra given by a new splitting of perm algebras.
\vsb

\subsection{Averaging operators and perm algebras}\

Let $\mathcal{P}$ be a binary operad with one binary operation and $(A,\cdot_{A})$ be a
$\mathcal{P}$-algebra. An {\bf averaging operator} on
$(A,\cdot_{A})$ is a linear map $P:A\rightarrow A$ such that
\vsb
\begin{equation}\mlabel{eq:Ao}
    P(x)\cdot_{A}P(y)=P\big(P(x)\cdot_{A}y\big)=P\big(x\cdot_{A} P(y)\big),\;\forall x,y\in A.
\vsa
\end{equation}
Then we call $(A,\cdot_{A},P)$ an {\bf averaging
$\mathcal{P}$-algebra}. Averaging operators for associative algebras originated from the 1895 work~\mcite{Re} of O. Reynolds on turbulence theory in fluid mechanics and those of Kolmogoroff and of Kamp\'e de
F\'eriet in probability~\mcite{KF}. In the last century, the operators were studied extensively by many authors, including G. Birkhoff, Miller  and Rota \cite{Bi,Mil,R2}.
On Lie algebras, an averaging operator arose from an
embedding tensor whose study can be traced back to gauged supergravity theory  \mcite{NS}  and attracts more
attention in mathematical physics  \mcite{KS,L,STZ,TS}. Recently,
averaging operators find applications in combinatorics, number
theory, operads, cohomology and deformation
theory \mcite{DS,GuK,GZ,PBGN2,PG,WZ,ZG}.

This paper focuses on averaging operators on commutative associative
algebras, that is, the {\bf averaging commutative associative
algebras}. In this case, \eqref{eq:Ao} simplifies to
\vsa
\begin{equation}\mlabel{eq:aver op}
P(x)\cdot_{A}P(y)=P\big(P(x)\cdot_{A} y\big),\;\;\forall x,y\in A.
\vsa
\end{equation}

On the other hand, a {\bf perm algebra} \mcite{Chap2001} is a vector
space $A$ together with a binary operation  $\circ _{A}:A\otimes
A\rightarrow A$ satisfying
\vsb
\begin{equation}\mlabel{eq:perm}
x\circ _{A}(y\circ _{A}z)=(x\circ _{A}y)\circ _{A}z=(y\circ
_{A}x)\circ _{A}z,\;\;\forall x,y,z\in A.
\vsb
\end{equation}
Perm algebras play an important role in algebraic operad theory,
since their operad is the Koszul dual to the operad of pre-Lie
algebras \mcite{LV}, as well as the duplicator of the operad of
commutative associative algebras \mcite{GuK,PBGN}.

These two structures are connected by the following result of Aguiar, in the spirit of S. Gelfand's theorem that a differential commutative associative algebra gives rise to a Novikov algebra~\mcite{GD}.
\vsb
\begin{pro}\mlabel{ex:comm aver}\mcite{Agu2000*}
Let $P$ be an averaging operator on a commutative associative algebra
$(A,\cdot_{A})$. Then there is a perm algebra $(A,\circ_{A})$
given by
\vsb
\begin{equation}\mlabel{eq:perm from aver op}
x\circ_{A} y=P(x)\cdot_{A} y, \;\;\forall x,y\in A.
\vsb
\end{equation}
\end{pro}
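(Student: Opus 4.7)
The plan is to verify the perm-algebra identities \eqref{eq:perm} directly for the operation $x\circ_A y:=P(x)\cdot_A y$, by unfolding each of the three expressions and reducing them, via the averaging identity \eqref{eq:aver op} together with the commutativity and associativity of $\cdot_A$, to a single common normal form. The target normal form I expect is $\bigl(P(x)\cdot_A P(y)\bigr)\cdot_A z$, which is visibly symmetric in $x$ and $y$ and to which associativity naturally reshuffles $P(x)\cdot_A\bigl(P(y)\cdot_A z\bigr)$.

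First I would compute the outer expressions. For the right-hand identity, $(x\circ_A y)\circ_A z=P\bigl(P(x)\cdot_A y\bigr)\cdot_A z$ and $(y\circ_A x)\circ_A z=P\bigl(P(y)\cdot_A x\bigr)\cdot_A z$; applying \eqref{eq:aver op} to each turns them into $\bigl(P(x)\cdot_A P(y)\bigr)\cdot_A z$ and $\bigl(P(y)\cdot_A P(x)\bigr)\cdot_A z$, which agree by commutativity of $\cdot_A$. For the leftmost expression, $x\circ_A(y\circ_A z)=P(x)\cdot_A\bigl(P(y)\cdot_A z\bigr)$, which equals $\bigl(P(x)\cdot_A P(y)\bigr)\cdot_A z$ by associativity of $\cdot_A$. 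Thus all three sides of \eqref{eq:perm} coincide.

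There is no real obstacle here: the argument is a three-line reduction, with the only subtle point being the recognition that both sides of the second equality in \eqref{eq:perm} require crucially the commutativity of $\cdot_A$ (not just associativity and the averaging identity), since the averaging identity gives $P\bigl(P(x)\cdot_A y\bigr)=P(x)\cdot_A P(y)$ but not directly $P\bigl(P(y)\cdot_A x\bigr)=P(x)\cdot_A P(y)$. I would flag this explicitly, since it is precisely the feature of the commutative setting that allows the perm-algebra identity (with its left-symmetry in the first two arguments) to emerge, and it foreshadows why the non-commutative generalization to be developed later must involve a more refined splitting of the perm multiplication.
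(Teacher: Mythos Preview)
Your proof is correct and is the standard direct verification. The paper itself does not give a proof of this proposition: it is stated as a known result cited from Aguiar \cite{Agu2000*}, so there is nothing to compare against beyond noting that your argument is exactly the expected three-line computation.

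One small remark on your closing comment: the ``more refined splitting'' developed later in the paper is not a non-commutative generalization of this proposition, but rather a splitting of the perm multiplication $\circ_A$ itself into two operations $\triangleright_A,\triangleleft_A$ (Definition~\ref{defi:1.2}), motivated by bialgebra and bilinear-form considerations. The commutativity of $\cdot_A$ you flag is indeed essential here, but its role in the sequel is to make the induced bilinear form left-invariant (Proposition~\ref{pro:2.1}) rather than to set up a non-commutative analogue.
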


As it turns out, this is just one instance of the general phenomenon that an averaging operator on a
$\mathcal P$-algebra gives rise to a ${\rm Du}({\mathcal P})$-algebra,
where ${\rm Du}({\mathcal P})$ is the duplicator of the operad
$\mathcal P$ \mcite{GuK,PBGN,PBGN2} (called di-Var-algebras in~\mcite{GuK}).

In addition to differential algebras and averaging algebras, applications in mathematics and physics have given rise to other algebras that are equipped with linear operators, such as Rota-Baxter operators, Nijenhuis operators and Reynolds operators. These structures are unified under the term of {\bf $\calp$-operated algebras}, defined simply as a $\calp$-algebra equipped with a linear operator that usually satisfies an operator identity \mcite{Gop}.
\vsb
\subsection{Bialgebras of operated algebras and their induced bialgebras}
\vsa
\subsubsection{Bialgebras from the Manin triple approach}
A bialgebra structure is a vector space equipped with an algebra
structure and a coalgebra structure of the same type tied together by compatible conditions. Well-known bialgebra structures include Lie
bialgebras \mcite{Cha,Dri} which are closely related to Poisson-Lie
groups and play an important role in the infinitesimalization of
quantum groups, and antisymmetric infinitesimal (ASI) bialgebras
\mcite{Agu2000, Agu2001, Agu2004, Bai2010} which can be used to
construct symmetric Frobenius algebras and thus find applications
in 2d topological and string theory \mcite{Kock,Lau}.

All these bialgebras have a common property that they are
equivalently characterized by Manin triples associated to
nondegenerate bilinear forms on the corresponding algebras
satisfying certain conditions. Explicitly, a Lie bialgebra is
equivalent to a Manin triple of Lie algebras (associated to the
nondegenerate symmetric invariant bilinear form) and an ASI
bialgebra is equivalent to a double construction of Frobenius
algebra (that is, a Manin triple of associative algebras associated to
the nondegenerate symmetric invariant bilinear form). Such an
approach has been successively applied to many other types of
algebras such as pre-Lie algebras \mcite{Bai2008} and Poisson
algebras \mcite{NB}.
\vsb
\subsubsection{Bialgebras of operated algebras}

Given the broad applications of various operated algebras, it is
important to study the corresponding bialgebras, that is, to
extend the above bialgebra theory of algebra structures to the
context of operated algebras. Some of these bialgebra theories
have been established recently, including Rota-Baxter associative
algebras and Lie algebras \mcite{BGLM,BGM}, differential
associative algebras \mcite{LLB}, and endo Lie algebras
\mcite{BGS}.

These operated bialgebras also have equivalent interpretations in terms of Manin triples of the corresponding algebras with the linear operators satisfying
certain compatibility conditions. However, integrating the linear operators with the compatibility conditions of the original bialgebras introduces additional complexity, reflected as the admissible conditions
between the linear operators on the algebras and the coalgebras.
\vsb
\subsubsection{Induced bialgebras from operated bialgebras}
Various operated algebras are known to induce new structures. Other than the aforementioned instances of
differential commutative associative algebras inducing
Novikov algebras and averaging $\mathcal P$-algebras inducing
${\rm Du}(\mathcal P)$-algebras, Rota-Baxter $\mathcal P$-algebras
induce ${\rm Su}(\mathcal P)$-algebras, where ${\rm Su}(\mathcal
P)$ is the successor of the operad $\mathcal P$ \mcite{BBGN}.
Typical examples are dendriform algebras from Rota-Baxter
associative algebras and pre-Lie algebras from Rota-Baxter Lie
algebras.

Moving to the bialgebra level, it is also interesting to explore the induced bialgebra structures of the various operated bialgebras. So far, this process remains quite mysterious, since the patterns of induced structures on the algebra level no longer apply.
For example, for bialgebras for Rota-Baxter Lie algebras,
the induced structures are special L-dendriform bialgebras
\mcite{BGLM} that relates to Lie groups with left-invariant flat
pseudo-metrics in geometry \mcite{BHC}, instead of the bialgebras
for pre-Lie algebras \mcite{Bai2008}, whereas in the case of
bialgebras for Rota-Baxter (associative) algebras \mcite{BGM}, the induced
structures are a subclass of quadri-bialgebras \mcite{NB2}, instead
of the various bialgebra theories for dendriform algebras
\mcite{Bai2010}. Moreover, only under certain necessary conditions
commutative and cocomutative differential ASI bialgebras give
Novikov bialgebras \mcite{HBG} and Poisson bialgebras \mcite{NB}.
\vsb
\subsection{A new splitting of perm algebras and \sdppbs}\

This paper aims to develop a bialgebra structure for  averaging commutative associative algebras and to lift the induction in Proposition \mref{ex:comm aver} to the level of bialgebras.

\subsubsection{Challenges in constructing the induced
bialgebras from averaging commutative and cocommutative
infinitesimal bialgebras} As the first step, we develop a
bialgebra structure for averaging commutative associative algebras,
namely averaging commutative and cocommutative
infinitesimal bialgebras. The
main idea follows the aforementioned approach given in
\mcite{BGM}, that is,  averaging commutative and cocommutative
infinitesimal bialgebras are equivalently characterized by the introduced
notion of double constructions of averaging Frobenius commutative
algebras.

On the other hand, there is a bialgebra theory for perm algebras
(\mcite{LZB}, see also \mcite{Hou}) that are characterized in terms of Manin triples of perm algebras
associated to nondegenerate invariant bilinear forms. Here a bilinear form $\mathcal B$ on a perm algebra
$(A,\circ_A)$ is called {\bf invariant} if $\mathcal B$ is antisymmetric and satisfies
\vsa
\begin{equation}
\mathcal B(x\circ_A y, z)=\mathcal B(x, y\circ_A z-z\circ_A
y),\;\;\forall x,y,z\in A.
\end{equation}

We observe that this perm bialgebra does not serve the purpose of
the ``induced structure" of an averaging commutative and
cocommutative infinitesimal bialgebra. Indeed, the former
corresponds to a Manin triple of perm algebras whose associated
bilinear form is {\it antisymmetric}, whereas the latter should be induced from a
double construction of averaging Frobenius commutative algebras
whose associated bilinear form is {\it symmetric}. Thus to lift
Proposition \mref{ex:comm aver} to the level of bialgebras
requires a new kind of bilinear forms.

\subsubsection{ Induced structures from symmetric Frobenius commutative algebras with averaging operators}
A critical observation in finding such a new bilinear form is that, when an averaging commutative associative algebra is equipped with a nondegenerate symmetric invariant bilinear form (that is, a symmetric Frobenius commutative algebra $(A,\cdot_A,\mathcal B)$ with an
averaging operator $P$), the induced perm algebra $(A,\circ_A)$ from Proposition \mref{ex:comm aver} is also equipped with the bilinear form $\mathcal B$ which is {\bf left-invariant} in the sense that
\vsa
\begin{equation*}
\mathcal{B}(x\circ_{A}y,z)=\mathcal{B}(y,x\circ_{A}z),\;\;\forall
x,y,z\in A.
\vsa
\end{equation*}
This observation leads us to choosing the Manin triples of perm
algebras associated to nondegenerate symmetric left-invariant
bilinear forms as a candidate for the desired induced structures of double constructions of averaging Frobenius commutative algebras, or equivalently, the
bialgebra structures corresponding to the former as the induced
structures of averaging commutative and cocommutative infinitesimal
bialgebras.

In order to make this idea work, we first need to find the
underlying algebra structures of perm algebras with nondegenerate
symmetric left-invariant bilinear forms, in the sense
that these structures are obtained from the latter such that they
give representations of the perm algebras on the dual spaces which
are equivalent to the adjoint representations of the perm
algebras.  Motivated by the
previous studies on the induced structures
of bialgebras for Rota-Baxter algebras \mcite{BGM} and
Rota-Baxter Lie algebras \mcite{BGLM}, such underlying structures
should be related to ``splitting" the operation of the perm
algebra in a suitable sense.

\subsubsection{A new splitting of perm algebras}
Let us recall the usual splitting operations in the sense of
successors of operads \mcite{BBGN}. For a binary operad $\mathcal
P$ with one binary operation, the notion of a {\bf pre-$\mathcal
P$ algebra} $(A,\triangleright_{A},\triangleleft_{A})$ is a vector
space $A$ with multiplications
$\triangleright_{A},\triangleleft_{A}:A\otimes A\rightarrow A$,
such that $(A,\circ_{A})$ given by
\vsa
\begin{equation}\mlabel{eq:Dpp}
x\circ_{A}y=x\triangleright_{A} y+x\triangleleft_{A} y,\;\forall
x,y\in A
\vsa
\end{equation}
is a $\mathcal{P}$-algebra and
$(\mathcal{L}_{\triangleright_{A}},\mathcal{R}_{\triangleleft_{A}},A)$
is a representation of $(A,\circ_{A})$, where
$\mathcal{L}_{\triangleright_{A}}(x)$ and
$\mathcal{R}_{\triangleleft_{A}}(x)$ for all $x\in A$ are left and
right multiplication operators of $\triangleright_{A}$ and
$\triangleleft_{A}$ respectively. In the case of perm algebras,
the notion of pre-perm algebras has been given in \mcite{LZB}.

However, this splitting does not serve our purpose, since pre-perm
algebras are the underlying  algebra structures of
perm algebras with nondegenerate symmetric bilinear forms which do not satisfy our required left-invariant condition. Consequently, we have to find a new splitting of perm algebras as follows.

\begin{defi}\mlabel{defi:1.2}
An {\bf apre-perm algebra} (with apre in short for averaging-pre)
$(A,\triangleright_{A},\triangleleft_{A})$ is a vector space $A$
with multiplications
$\triangleright_{A},\triangleleft_{A}:A\otimes A\rightarrow A$,
such that $(A,\circ_{A})$ defined by \meqref{eq:Dpp} is a {perm
algebra} and
$(\mathcal{L}^*_{\triangleright_{A}}+\mathcal{R}^*_{\triangleleft_{A}},-\mathcal{R}^*_{\triangleleft_{A}},A^*)$
is a representation of $(A,\circ_{A})$ where, for all $x\in A$,
$\mathcal{L}^*_{\triangleright_{A}}(x)$ and
$\mathcal{R}^*_{\triangleleft_{A}}(x)$ are the transpose maps of
$\mathcal{L}_{\triangleright_{A}}(x)$ and
$\mathcal{R}_{\triangleleft_{A}}(x)$ respectively.
\vsb
\end{defi}

Pre-perm algebras and apre-perm algebras are defined by different
representations of perm algebras and hence are
different splitting of the operation of perm algebras.
In particular, an \dpp $(A,\triangleright_A,\triangleleft_A)$ is
called {\bf special} if $\triangleleft_{A}$ is commutative. We
show that \sdpps are the underlying algebra structures of perm
algebras with nondegenerate symmetric left-invariant bilinear
forms. 
More explicitly,  a perm algebra
$(A,\circ_A)$ with a nondegenerate symmetric left-invariant
bilinear form induces a \sdpp
$(A,\triangleright_A,\triangleleft_A)$. And there is a one-to-one
correspondence between the former and \sdpps with nondegenerate
symmetric bilinear forms satisfying certain conditions
corresponding to the left-invariance. Note that the
left-invariance of the nondegenerate bilinear form corresponds to
the equivalence between the adjoint representation of the perm
algebra $(A,\circ_A)$ and
$(\mathcal{L}_{\triangleright_A}^*+\mathcal{R}_{\triangleleft_A}^*,
-\mathcal{R}_{\triangleleft_A}^*, A^*)$ as the representations of
$(A,\circ_A)$, or equivalently, the equivalence between the
adjoint representation and the coadjoint representation of the
\sdpp $(A,\triangleright_A,\triangleleft_A)$. 

Eventually, we introduce the notion of a \sdppb, as the
equivalent structure of a Manin triple of
perm algebras associated to a nondegenerate symmetric
left-invariant bilinear form. Equivalently, the corresponding
Manin triple of \sdpps is obtained naturally from a
double construction of averaging Frobenius commutative algebra.
Consequently the \sdppb can be regarded as the desired induced
bialgebra structure of an averaging commutative and cocommutative
infinitesimal bialgebra.

\subsection{Organization of the paper}\

This paper is organized as follows.

In Section \mref{sec:2}, we study the representations of averaging
commutative associative algebras. The notion of admissible
averaging commutative associative algebras is introduced from the
representations of averaging commutative associative algebras on
the dual spaces. Then we introduce the notions of double
constructions of averaging Frobenius commutative algebras and
averaging commutative and cocommutative infinitesimal bialgebras.
The two structures are shown to be equivalent (Theorem~\ref{thm:2.11}).

In Section \mref{sec:4}, we first introduce the notion of an \dpp as a new splitting of the operation of perm
algebras. Then the notion of a \sdpp is introduced as the
underlying algebra structure of a nondegenerate symmetric
left-invariant bilinear form on a perm algebra
(Corollary~\ref{cor:quadratic1} and Proposition~\ref{pro:330}). We
also prove that an admissible averaging commutative associative
algebra induces a \sdpp (Proposition~\ref{pro:com asso and SDPP}),
and a \sdpp gives a pre-Lie algebra as well as an anti-pre-Lie algebra.

In Section \mref{sec:5}, we introduce the notions of a Manin triple of perm algebras associated to a nondegenerate
symmetric left-invariant bilinear form, a Manin triple
of \sdpps, and \sdppbs. The equivalences of these notions are
established. It follows that a double construction of averaging
Frobenius commutative algebra induces a Manin triple of \sdpps
(Proposition~\ref{pro:5.2}). Equivalently from the bialgebra
viewpoint, a averaging commutative and cocommutative infinitesimal
bialgebra induces a \sdppb (Proposition \ref{pro:5.9}).

\noindent
{\bf Notations. }
Throughout this paper, unless otherwise specified, all the vector
spaces and algebras are finite-dimensional over an algebraically
closed field $\mathbb {K}$ of characteristic zero, although many
results and notions remain valid in the infinite-dimensional case, and by an algebra, we mean an associative algebra. For a vector space $A$, let
\vsb
$$\tau:A\otimes A\rightarrow A\otimes A,\quad x\otimes y\mapsto y\otimes x,\;\;\;\forall x,y\in A,
\vsa
$$
be the flip operator. Let  $\circ_A:A\otimes A\rightarrow A$ be a binary operation on $A$. Define linear maps ${\mathcal L}_{\circ_A},
{\mathcal R}_{\circ_A}:A\rightarrow {\rm End}_{\mathbb K}(A)$
respectively by
\vsb
\begin{eqnarray*}
    {\mathcal L}_{\circ_A}(x)y:=x\circ_A y,\;\; {\mathcal
        R}_{\circ_A}(x)y:=y\circ_A x, \;\;\;\forall x, y\in A.
\vse
\end{eqnarray*}

\section{Averaging commutative and cocommutative infinitesimal bialgebras}\mlabel{sec:2}\

This section studies representations of averaging commutative
algebras. Such a representation on the dual space
leads to the notion of an admissible averaging commutative
algebra. In particular, there exists a natural
admissible averaging commutative algebra structure on
a (symmetric) Frobenius commutative algebra with an averaging
operator. The equivalence between double constructions of Frobenius commutative algebras and commutative and cocommutative
infinitesimal bialgebras is enriched to the case when both structures carry suitably defined averaging operators.

\subsection{Representations of averaging commutative algebras}\
We recall the following notion.

\begin{defi}
Let $(A,\cdot_A)$ be a commutative algebra. If a
linear map $P:A\rightarrow A$ satisfies (\mref{eq:aver op}), then
$P$ is called an {\bf averaging operator} on $(A,\cdot_A)$, and $(A,\cdot_A,P)$ is called an {\bf averaging commutative algebra}.
\vsb
\end{defi}

We introduce the notion of representations of averaging
commutative algebras.

\begin{defi}
Let $(A,\cdot_{A},P)$ be an averaging commutative
algebra. Let $(\mu,V)$ be a representation of
$(A,\cdot_{A})$, given by a linear map $\mu:A\rightarrow\mathrm{End}_{\mathbb
K}(V)$ satisfying
\vsa
\begin{equation*}
\mu (x\cdot_{A}y)v=\mu (x)\mu (y)v,\;\forall x,y\in A,v\in V.
\vsa
\end{equation*}
Let $\alpha:V\rightarrow V$ be a linear map such that
\vsa
\begin{equation}\mlabel{eq:rep ao}
\mu(Px)\alpha(v)=\alpha\big(\mu(Px)v\big)=\alpha\big(\mu(x)\alpha(v)\big),\;\forall
x\in A, v\in V.
\vsa
\end{equation}
Then the triple $(\mu,\alpha,V)$ is called a {\bf representation}
of $(A,\cdot_{A},P)$. Two representations
$(\mu_{1},\alpha_{1},V_{1})$ and $(\mu_{2},\alpha_{2},V_{2})$ of
$(A,\cdot_{A},P)$ are called {\bf equivalent} if there exists a
linear isomorphism $\phi:V_{1}\rightarrow V_{2}$ such that
\vsb
\begin{equation*}
\phi\big(\mu_{1}(x)v\big)=\mu_{2}(x)\phi(v),\; \phi\alpha_{1}(v)=\alpha_{2}\phi(v),\;\forall x\in A, v\in V_{1}.
\end{equation*}
\end{defi}

\begin{ex}
Let $(A,\cdot_{A},P)$ be an averaging commutative algebra.
Then  $(\mathcal{L}_{\cdot_{A}},P,A)$ is a
representation of $(A,\cdot_{A},P)$, which is called the {\bf
adjoint representation} of $(A,\cdot_{A},P)$.
\end{ex}

For vector spaces $V_1$ and $V_2$ and linear maps
$\phi_1:V_1\rightarrow V_1$ and $\phi_{2}:V_2\rightarrow V_2$, define
\vsa
\begin{equation}\mlabel{eq:pro:SD RB Lie2}
\phi_1+\phi_2:V_1\oplus V_2\rightarrow V_1\oplus
V_2,\;\;v_1+v_2\mapsto \phi_1(v_1)+\phi_2(v_2),\;\;\forall v_1\in
V_1,v_2\in V_2.
\vsb
\end{equation}

A straightforward verification gives
\begin{pro}
Let $(A,\cdot_{A},P)$ be an averaging commutative
algebra. Let $V$ be a vector space, and
$\mu:A\rightarrow\mathrm{End}_{\mathbb K}(V),\;
\alpha:V\rightarrow V$ be linear maps. Then $(\mu,\alpha,V)$ is a
representation of $(A,\cdot_{A},P)$ if and only if there is a
commutative algebra structure on the direct sum
$A\oplus V$ of vector spaces  given by
\vsb
\begin{equation*}
(x+u)\cdot_{d}(y+v):=x\cdot_{A}y+\mu(x)v+\mu(y)u,\;\forall  x,y\in A, u,v\in V,
\vsa
\end{equation*}
such that $P+\alpha$ is an averaging operator on $(A\oplus
V,\cdot_{d})$. In this case, we denote the averaging commutative
algebra $(A\oplus V,\cdot_{d},P+\alpha)$ by
$(A\ltimes_{\mu}V,P+\alpha)$.
\end{pro}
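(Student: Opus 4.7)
The plan is to verify the equivalence by treating this as a semi-direct product construction and matching components of $A$ and $V$ in the averaging identity on $A\oplus V$.

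I would first dispense with the algebra part. It is a standard fact that the bilinear operation $\cdot_d$ is a commutative algebra multiplication on $A\oplus V$ if and only if $(\mu,V)$ is a representation of $(A,\cdot_A)$: commutativity of $\cdot_d$ is built into the symmetric formula $(x+u)\cdot_d(y+v) = x\cdot_A y + \mu(x)v + \mu(y)u$, and computing $\big((x+u)\cdot_d(y+v)\big)\cdot_d(z+w)$ versus its associativity partner reduces, using commutativity of $\cdot_A$, exactly to the representation identity $\mu(x\cdot_A y)w = \mu(x)\mu(y)w$. So I can assume throughout that $(A\oplus V,\cdot_d)$ is a commutative algebra and $(\mu,V)$ a representation.

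Next, I would expand the averaging identity $(P+\alpha)(X)\cdot_d(P+\alpha)(Y) = (P+\alpha)\big((P+\alpha)(X)\cdot_d Y\big)$ for $X=x+u$ and $Y=y+v$. The left-hand side equals
\begin{equation*}
Px\cdot_A Py + \mu(Px)\alpha(v) + \mu(Py)\alpha(u),
\end{equation*}
and the right-hand side equals
\begin{equation*}
P(Px\cdot_A y) + \alpha\big(\mu(Px)v\big) + \alpha\big(\mu(y)\alpha(u)\big).
\end{equation*}
Matching the $A$-component yields $Px\cdot_A Py = P(Px\cdot_A y)$, which is the averaging identity \eqref{eq:aver op} for $P$ on $(A,\cdot_A)$. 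Matching the $V$-component gives, for all $x,y\in A$ and $u,v\in V$,
\begin{equation*}
\mu(Px)\alpha(v) + \mu(Py)\alpha(u) = \alpha\big(\mu(Px)v\big) + \alpha\big(\mu(y)\alpha(u)\big).
\end{equation*}

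Finally, specializing this $V$-component equation by setting $u=0$ and then $v=0$ separately isolates the two identities $\mu(Px)\alpha(v) = \alpha(\mu(Px)v)$ and $\mu(Py)\alpha(u) = \alpha(\mu(y)\alpha(u))$, which together are exactly condition \eqref{eq:rep ao}; conversely, adding these two identities recovers the full $V$-component equation. Combined with the $A$-component statement, this establishes both directions. There is no real obstacle here beyond careful bookkeeping, which is why the excerpt labels the proposition as a straightforward verification; the only subtlety is to recognize that the split specializations $u=0$ and $v=0$ suffice to recover the full condition on $V$, thanks to the bilinear nature of both sides in $(u,v)$.
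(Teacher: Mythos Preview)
Your proof is correct and follows exactly the route the paper has in mind: the paper simply writes ``A straightforward verification gives'' before stating the proposition, and your component-by-component expansion of the averaging identity on $A\oplus V$, together with the specializations $u=0$ and $v=0$, is precisely that verification.
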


Let $\beta:V\rightarrow V$ be a linear map.
Denote the linear map $\beta^{*}:V^{*}\rightarrow V^{*}$ by
\vsa
\begin{equation*}
\langle\beta^{*}(u^{*}),v\rangle=\langle u^{*},\beta(v)\rangle,\;\forall u^{*}\in V^{*},v\in V.
\vsa
\end{equation*}
Moreover, for a linear map $\mu:A\rightarrow\mathrm{End}_{\mathbb
K}(V)$, denote the linear map
$\mu^{*}:A\rightarrow\mathrm{End}_{\mathbb K}(V^{*})$ by
$\mu^{*}(x)=\big(\mu(x)\big)^{*}$, that is,
\vsa
\begin{equation*}
\langle \mu^{*}(x)u^{*},v\rangle= \langle \big(\mu(x)\big)^{*}u^{*},v\rangle =\langle u^{*},\mu(x)v\rangle,\;\forall x\in
A, u^{*}\in V^{*}, v\in V.
\vsa
\end{equation*}

\begin{pro}\mlabel{pro:dual ao}
Let $(A,\cdot_{A},P)$ be an averaging commutative
algebra. Let $\mu:A\rightarrow\mathrm{End}_{\mathbb K}(V)$ and
$\beta:V\rightarrow V$ be linear maps. Then
$(\mu^{*},\beta^{*},V^{*})$ is a representation of
$(A,\cdot_{A},P)$ if and only if  $(\mu,\beta, V)$ is a
representation of $(A,\cdot_{A},P)$, that is, $(\mu,V)$ is a
representation of $(A,\cdot_{A})$ and $\beta$ satisfies the
following equation.
\vsa
\begin{equation}\mlabel{eq:aver pair rep}
\mu(Px)\beta(v)=\beta\big(\mu(Px)v\big)=\beta\big(\mu(x)\beta(v)\big),\;\forall x\in A, v\in V.
\vsa
\end{equation}
In this case, $( A\ltimes_{\mu^{*}}V^{*}, P+\beta^{*} )$   is an averaging   commutative algebra.
\end{pro}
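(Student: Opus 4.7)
The plan is to verify the iff by dualizing directly, treating separately the two ingredients of what it means for $(\mu,\beta,V)$ (resp.\ $(\mu^{*},\beta^{*},V^{*})$) to be a representation of $(A,\cdot_{A},P)$: namely (i) that $\mu$ (resp.\ $\mu^{*}$) is a representation of the underlying commutative algebra $(A,\cdot_{A})$, and (ii) that $\beta$ (resp.\ $\beta^{*}$) satisfies the compatibility \eqref{eq:rep ao} with $\mu$ (resp.\ $\mu^{*}$) and $P$. The second assertion of the proposition will then follow at once from the preceding semidirect product construction.

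For step (i), I would pair $\langle\mu^{*}(x\cdot_{A}y)u^{*},v\rangle$ and $\langle\mu^{*}(x)\mu^{*}(y)u^{*},v\rangle$ with arbitrary $v\in V$, $u^{*}\in V^{*}$ and move everything back to $V$ via the definition of the transpose. The first becomes $\langle u^{*},\mu(x\cdot_{A}y)v\rangle$ and the second $\langle u^{*},\mu(y)\mu(x)v\rangle$. Since $u^{*}$ and $v$ are arbitrary, $\mu^{*}$ is a representation of $(A,\cdot_{A})$ iff $\mu(x\cdot_{A}y)=\mu(y)\mu(x)$ for all $x,y\in A$. Here the commutativity $x\cdot_{A}y=y\cdot_{A}x$ is used to rewrite the right-hand side as $\mu(x)\mu(y)$, giving precisely the condition that $\mu$ itself is a representation.

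For step (ii), I would apply the identity $(\phi\psi)^{*}=\psi^{*}\phi^{*}$ to unpack each of the three expressions in \eqref{eq:rep ao} written for $(\mu^{*},\beta^{*},V^{*})$, pair with an arbitrary $v\in V$, and obtain
\begin{equation*}
\langle u^{*},\beta(\mu(Px)v)\rangle=\langle u^{*},\mu(Px)\beta(v)\rangle=\langle u^{*},\beta(\mu(x)\beta(v))\rangle.
\end{equation*}
Since $u^{*}\in V^{*}$ is arbitrary, this is equivalent to \eqref{eq:aver pair rep}. Combining (i) and (ii) yields the claimed equivalence, and the final sentence of the proposition is immediate: once $(\mu^{*},\beta^{*},V^{*})$ is recognized as a representation of $(A,\cdot_{A},P)$, the semidirect product $(A\ltimes_{\mu^{*}}V^{*},P+\beta^{*})$ is an averaging commutative algebra by the construction just above.

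The entire argument is a direct computation and no serious obstacle is expected. The only point requiring care is the order-reversal under transposition, where the commutativity of $\cdot_{A}$ plays the essential role in step (i); without it, dualization would only produce an anti-representation of $(A,\cdot_{A})$ and the proposition as stated would fail.
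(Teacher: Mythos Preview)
Your proof is correct and follows essentially the same route as the paper: the paper cites \mcite{Bai2010} for step (i) rather than writing it out, and for step (ii) it computes exactly the same three pairings you describe, reaching the same conclusion. Your explicit remark on the role of commutativity in step (i) is a nice addition.
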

\begin{proof}
By \mcite{Bai2010}, $(\mu^{*},V^{*})$ is a representation of $(A,\cdot_{A})$ if and only if $(\mu,V)$ is a representation of $(A,\cdot_{A})$.
For all $x\in A, u^{*}\in V^{*}, v\in V$, we have
\vsa
\begin{eqnarray*}
\langle \mu^{*}(Px)\beta^{*}(u^{*}),v\rangle&=&\langle u^{*}, \beta\big(\mu(Px)v\big)\rangle,\\
\langle \beta^{*}\big(\mu^{*}(Px)u^{*}\big),v\rangle&=&\langle u^{*},\mu(Px)\beta(v)\rangle,\\
\langle \beta^{*}\big(\mu^{*}(x)\beta^{*}(u^{*})\big),v\rangle&=&\langle u^{*}, \beta\big(\mu(x)\beta(v)\big)\rangle.
\vsa
\end{eqnarray*}
Hence the equation
\vsa
\begin{equation*}
\mu^{*}(Px)\beta^{*}(u^{*})=\beta^{*}\big(\mu^{*}(Px)u^{*}\big)=\beta^{*}\big(\mu^{*}(x)\beta^{*}(u^{*})\big)
\vsa
\end{equation*}
holds if and only if $\beta$ satisfies \meqref{eq:aver pair rep},
which is exactly \meqref{eq:rep ao} when $\alpha$ is replaced by
$\beta$. Therefore the conclusion follows.
\end{proof}

\begin{defi}
Let $(A,\cdot_{A},P)$ be an averaging commutative
algebra and $Q:A\rightarrow A$ be a linear map. We say that {\bf $Q$ is
admissible to $(A,\cdot_{A},P)$} if
\vsa
\begin{equation}\mlabel{eq:ao pair}
P(x)\cdot_{A} Q(y)=Q\big(P(x)\cdot_{A} y\big)=Q\big(x\cdot_{A}Q(y)\big),\;\forall x,y\in A.
\vsa
\end{equation}
In this case, we also call $(A,\cdot_{A},P,Q)$ an {\bf admissible averaging commutative algebra}.
\end{defi}

\begin{cor}
Let $(A,\cdot_{A},P)$ be an averaging commutative algebra.
Then $(\mathcal{L}^{*}_{\cdot_{A}},Q^{*},A^{*})$ is a representation of $(A,\cdot_{A},P)$ if and only if $(A,\cdot_{A},P,Q)$ is an admissible averaging commutative algebra, that is, \meqref{eq:ao pair} holds.
\end{cor}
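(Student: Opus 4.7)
The plan is to derive this corollary directly from Proposition \mref{pro:dual ao} by specializing its hypotheses to the adjoint representation. Namely, I take $V = A$, $\mu = \mathcal{L}_{\cdot_{A}}$, and $\beta = Q$. Because $(A,\cdot_{A})$ is a commutative associative algebra, the associativity identity $x \cdot_{A} (y \cdot_{A} v) = (x \cdot_{A} y) \cdot_{A} v$ says exactly that $(\mathcal{L}_{\cdot_{A}}, A)$ is a representation of $(A,\cdot_{A})$ in the ordinary sense, so the representation hypothesis appearing in Proposition \mref{pro:dual ao} is automatic in our setting.

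With these choices, Proposition \mref{pro:dual ao} asserts that $(\mathcal{L}^{*}_{\cdot_{A}}, Q^{*}, A^{*})$ is a representation of $(A,\cdot_{A},P)$ if and only if $Q$ satisfies equation \meqref{eq:aver pair rep}. Rewriting this equation under the substitutions $\mu(Px)v = P(x) \cdot_{A} v$ and $\mu(x)\beta(v) = x \cdot_{A} Q(v)$ transforms it into
\begin{equation*}
P(x) \cdot_{A} Q(v) = Q\big(P(x) \cdot_{A} v\big) = Q\big(x \cdot_{A} Q(v)\big), \quad \forall x, v \in A,
\end{equation*}
which is precisely equation \meqref{eq:ao pair}, that is, the admissibility of $Q$ with respect to $(A,\cdot_{A},P)$. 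Both directions of the biconditional therefore follow at once.

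Since the result is a direct specialization of the preceding proposition, I do not expect a genuine obstacle. The only point worth flagging is to confirm that the generic representation $(\mu,V)$ of Proposition \mref{pro:dual ao} can legitimately be taken to be the adjoint representation of $(A,\cdot_{A})$, which is immediate from the commutativity and associativity of $\cdot_{A}$, and then to keep track of which variable plays the role of $v \in V$ in the translation between \meqref{eq:aver pair rep} and \meqref{eq:ao pair}.
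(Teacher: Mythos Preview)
Your proof is correct and follows exactly the same approach as the paper's own proof, which consists of the single sentence ``It follows from Proposition \mref{pro:dual ao} by taking $\mu=\mathcal{L}_{\cdot_{A}},\; \beta=Q,\; V=A$.'' Your additional verification that $(\mathcal{L}_{\cdot_{A}},A)$ is indeed a representation and your explicit translation of \meqref{eq:aver pair rep} into \meqref{eq:ao pair} are fine elaborations of what the paper leaves implicit.
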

\begin{proof}
It follows from Proposition \mref{pro:dual ao} by taking $\mu=\mathcal{L}_{\cdot_{A}},\; \beta=Q,\; V=A$.
\end{proof}

\begin{pro}\mlabel{pro:2.6}
Let $(A,\cdot_{A},\mathcal{B})$ be a Frobenius commutative
algebra. Let $P$ be an averaging operator on
$(A,\cdot_{A})$ and $\widehat{P}$ be the adjoint map of $P$ with
respect to $\mathcal{B}$, that is,
\vsb
\begin{equation*}
\mathcal{B}\big(P(x),y\big)=\mathcal{B}\big(x,\widehat{P}(y)\big),\;\forall x,y\in A.
\vsb
\end{equation*}
Then $(A,\cdot_{A},P,\widehat{P})$ is an admissible averaging commutative algebra.
Moreover, $(\mathcal{L}^{*}_{\cdot_{A}},\widehat{P}^{*},$
$A^{*})$ and $(\mathcal{L}_{\cdot_{A}},P,A)$ are equivalent as representations of $(A,\cdot_{A},P)$.
Conversely, let $(A,\cdot_{A},P,Q)$ be an admissible averaging commutative algebra.
If the resulting representation $(\mathcal{L}^{*}_{\cdot_{A}},Q^{*},A^{*})$ of $(A,\cdot_{A},P)$ is equivalent to $(\mathcal{L}_{\cdot_{A}},P,A)$, then there exists a nondegenerate invariant bilinear form $\mathcal{B}$ on $(A,\cdot_{A})$ such that $Q=\widehat{P}$.
\end{pro}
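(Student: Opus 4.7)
The plan is to treat both directions via the standard correspondence between nondegenerate bilinear forms on $A$ and linear isomorphisms $\varphi\colon A\to A^*$, namely $\varphi(x)(y)=\mathcal{B}(x,y)$. Under this correspondence, invariance of $\mathcal{B}$ translates exactly into the intertwining of $\mathcal{L}_{\cdot_A}$ with $\mathcal{L}^*_{\cdot_A}$, and the adjoint relation $\mathcal{B}(P(x),y)=\mathcal{B}(x,\widehat P(y))$ translates exactly into $\varphi\circ P=\widehat P{}^*\circ\varphi$.

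For the forward direction, I would first verify the admissibility condition \eqref{eq:ao pair} by testing each claimed equality against $\mathcal{B}(-,z)$ for arbitrary $z\in A$ and using (i) invariance of $\mathcal{B}$, (ii) commutativity of $\cdot_A$, (iii) the defining relation of $\widehat P$, and (iv) the averaging identity \eqref{eq:aver op} for $P$. For instance, $\mathcal{B}(P(x)\cdot_A \widehat P(y),z) = \mathcal{B}(\widehat P(y), P(x)\cdot_A z) = \mathcal{B}(y, P(P(x)\cdot_A z)) = \mathcal{B}(y, P(x)\cdot_A P(z)) = \mathcal{B}(\widehat P(P(x)\cdot_A y), z)$, and the other equality is analogous; nondegeneracy of $\mathcal{B}$ then yields \eqref{eq:ao pair}. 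Next, define $\varphi\colon A\to A^*$ by $\varphi(x)(y)=\mathcal{B}(x,y)$; this is a linear isomorphism by nondegeneracy. The identity $\varphi(x\cdot_A y)(z)=\mathcal{B}(x\cdot_A y,z)=\mathcal{B}(y,x\cdot_A z)=(\mathcal{L}^*_{\cdot_A}(x)\varphi(y))(z)$ shows $\varphi\circ\mathcal{L}_{\cdot_A}(x)=\mathcal{L}^*_{\cdot_A}(x)\circ\varphi$, and the relation $\mathcal{B}(P(y),z)=\mathcal{B}(y,\widehat P(z))$ shows $\varphi\circ P=\widehat P{}^*\circ\varphi$, so $\varphi$ is the desired equivalence.

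For the converse, let $\varphi\colon A\to A^*$ be the given linear isomorphism of representations and define $\mathcal{B}(x,y):=\varphi(x)(y)$. Nondegeneracy is immediate since $\varphi$ is an isomorphism. The intertwining $\varphi\circ\mathcal{L}_{\cdot_A}(x)=\mathcal{L}^*_{\cdot_A}(x)\circ\varphi$ unravels to $\mathcal{B}(x\cdot_A y,z)=\mathcal{B}(y,x\cdot_A z)$, which by commutativity of $\cdot_A$ is exactly invariance of $\mathcal{B}$. Finally, the intertwining $\varphi\circ P=Q^*\circ\varphi$ reads $\mathcal{B}(P(x),y)=\mathcal{B}(x,Q(y))$ for all $x,y\in A$, which is precisely the defining property of $\widehat P$; nondegeneracy of $\mathcal{B}$ then forces $Q=\widehat P$.

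I expect no serious obstacle: the argument is essentially a translation back and forth through the isomorphism $\varphi$. The only point that needs some care is the verification of \eqref{eq:ao pair} in the forward direction, which is a short chain of rewriting using invariance and the averaging identity, and the observation that in the converse part we only need $\mathcal{B}$ to be invariant and nondegenerate (not necessarily symmetric), so no symmetry constraint has to be extracted from the equivalence data.
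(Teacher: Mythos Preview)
Your approach is correct and is essentially the standard one (the paper itself just refers to \cite[Proposition 3.9]{BGM}). However, your sample computation for admissibility tacitly uses symmetry of $\mathcal{B}$, which the statement does \emph{not} assume (the paper's own proof stresses precisely this point). The problematic step is
\[
\mathcal{B}(\widehat P(y),\,P(x)\cdot_A z)\;=\;\mathcal{B}\big(y,\,P(P(x)\cdot_A z)\big),
\]
which would require $\mathcal{B}(\widehat P(u),w)=\mathcal{B}(u,P(w))$; but the defining relation only gives $\mathcal{B}(P(u),w)=\mathcal{B}(u,\widehat P(w))$, and these coincide only when $\mathcal{B}$ is symmetric. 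The final step $\mathcal{B}(y,P(x)\cdot_A P(z))=\mathcal{B}(\widehat P(P(x)\cdot_A y),z)$ has the same issue.

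The fix is immediate: test against $\mathcal{B}(z,-)$ rather than $\mathcal{B}(-,z)$. Then invariance (in the form $\mathcal{B}(a,bc)=\mathcal{B}(ab,c)$) and the adjoint relation are always applied in the available direction, and one gets
\[
\mathcal{B}\big(z,\,P(x)\cdot_A\widehat P(y)\big)=\mathcal{B}\big(z,\,\widehat P(P(x)\cdot_A y)\big)=\mathcal{B}\big(z,\,\widehat P(x\cdot_A\widehat P(y))\big)=\mathcal{B}\big(P(x)\cdot_A P(z),\,y\big)
\]
for all $z$, without ever invoking symmetry. Alternatively, you can simply reorder your argument: first establish the equivalence $(\mathcal{L}_{\cdot_A},P,A)\cong(\mathcal{L}^*_{\cdot_A},\widehat P^{\,*},A^*)$ via $\varphi$ (which you do correctly), and then admissibility follows at once from Corollary~\ref{pro:dual ao} applied with $\beta=\widehat P$. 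The converse direction of your argument is fine as written.
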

\begin{proof}
The proof follows the same argument as the one of \cite[Proposition 3.9]{BGM}.
By the commutativity of the multiplication $\cdot_{A}$, the bilinear form $\mathcal{B}$ no longer needs to be symmetric herein.
\end{proof}

Note that if a Frobenius algebra $(A,\cdot_{A},\mathcal{B})$ is antisymmetric in the sense that $\mathcal{B}$ is antisymmetric, then the multiplication $\cdot_{A}$ is zero on $A$. In the following examples, which will be revisited several times later, we will only consider symmetric Frobenius algebras.

\begin{ex}\mlabel{pre-ex:2.2}
Let $(A,\cdot_{A})$ be a commutative algebra with a representation $(\mu,V)$. Then the projection $P:A\oplus V\rightarrow A\oplus V$ given by
\vsb
\begin{equation*}\mlabel{eq:aveo}
P(x+u)=x,\;\forall x\in A, u\in V
\vsa
\end{equation*}
is an averaging operator on $A\ltimes _{\mu}V$.
In particular when $V=A^{*}$ and $\mu=\mathcal{L}^{*}_{\cdot_{A}}$, the natural nondegenerate symmetric bilinear form $\mathcal{B}_{d}$ given by
\vsa
\begin{equation}\mlabel{eq:bfds}
\mathcal{B}_{d}(x+a^{*},y+b^{*})=\langle x,b^{*}\rangle+\langle a^{*},y\rangle,\;\forall x,y\in A, a^{*},b^{*}\in A^{*}
\vsa
\end{equation}
is invariant on $A\ltimes_{\mathcal{L}^{*}_{\cdot_{A}}}A^{*}$.
In this case there is an admissible averaging commutative algebra
$(A\ltimes_{\mathcal{L}^{*}_{\cdot_{A}}}A^{*},P,\widehat{P})$, where
\vsb
\begin{equation*}
\widehat{P}(x+a^{*})=a^{*},\;\;\forall  x\in A, a^*\in A^*.
\vsb\end{equation*}
\end{ex}

\begin{ex}\mlabel{pre-ex:2.3}
Let $(A,\cdot_{A})$ be a commutative algebra. Then there is a commutative algebra structure on $A\otimes A$ given by
\vsb
\begin{equation*}
(x\otimes y)\cdot(z\otimes w)=x\cdot_{A} z\otimes y\cdot_{A} w,\;\forall x,y,z,w\in A.
\vsb
\end{equation*}
Moreover, the linear map $P:A\otimes A\rightarrow A\otimes A$ given by
\vsb
\begin{equation*}
P(x\otimes y)=x\otimes y+y\otimes x, \quad \forall x,y\in A,
\vsa
\end{equation*}
is an averaging operator on $(A\otimes A,\cdot)$.
Furthermore, if there is a nondegenerate symmetric invariant bilinear form $\mathcal{B}$ on $(A,\cdot_{A})$, then the nondegenerate symmetric bilinear form $\mathcal{B}'$ on $A\otimes A$ given by
\vsa
\begin{equation*}
\mathcal{B}'(x\otimes y,z\otimes
w)=\mathcal{B}(x,z)\mathcal{B}(y,w), \quad \forall x,y,z,w\in A,
\vsa
\end{equation*}
is invariant. In this case there is an admissible averaging commutative algebra $(A\otimes A,\cdot,P,\widehat{P})$, where
\vsb
\begin{equation*}
\widehat{P}(x\otimes y)=x\otimes y+y\otimes x, \quad \forall
x,y\in A,
\vsa
\end{equation*}
coincides with $P$.
\end{ex}

\begin{ex}\mlabel{ex:multiplication}
Let $(A,\cdot_{A})$ be the 4-dimensional commutative
algebra with a basis $\{ e_{1},e_{2}$, $e_{3},e_{4}\}$ whose
nonzero products are
\vsa
\begin{eqnarray}
e_{1}\cdot_{A}e_{1}=e_{1},\; e_{1}\cdot_{A}e_{2}=e_{2},\; e_{1}\cdot_{A}e_{3}=e_{2}\cdot_{A}e_{4}=e_{3},\; e_{1}\cdot_{A}e_{4}=e_{4}.
\vsb
\end{eqnarray}
Define a linear map $P:A\rightarrow A$ whose nonzero values are given by
\vsb
\begin{equation}\mlabel{eq:ex4}
P(e_{1})=e_{1},\;P(e_{2})=e_{2}.
\vsa
\end{equation}
Then $P$ is an averaging operator on $(A,\cdot_{A})$.
Moreover, there is a nondegenerate symmetric invariant bilinear form $\mathcal{B}$ on $(A,\cdot_{A})$ whose nonzero values are
\vsa
\begin{equation}\mlabel{eq:10.0}
\mathcal{B}(e_{1},e_{3})=\mathcal{B}(e_{2},e_{4})=1.
\vsa
\end{equation}
The adjoint map $\widehat{P}$ of $P$ with respect to $\mathcal{B}$ has its nonzero values given by
\vsa
\begin{equation}
\widehat{P}(e_{3})=e_{3},\;\widehat{P}(e_{4})=e_{4}.
\vsa
\end{equation}
Then by Proposition \mref{pro:2.6}, $(A,\cdot_{A},P,\widehat{P})$ is an admissible averaging commutative algebra.
\end{ex}

\subsection{Double constructions of averaging Frobenius commutative algebras and averaging commutative and cocommutative infinitesimal bialgebras}\

Let $(A,\cdot_{A})$ and $(A^{*},\cdot_{A^{*}})$ be commutative
algebras. If there is a commutative algebra structure $(A\oplus A^{*},\cdot_{d})$ on
$A\oplus A^{*}$ which contains $(A,\cdot_{A})$ and
$(A^{*},\cdot_{A^{*}})$ as subalgebras, and if the natural nondegenerate symmetric bilinear form
$\mathcal{B}_{d}$ given by \meqref{eq:bfds} is invariant on
$(A\oplus A^{*},\cdot_{d})$, then we call $\big( (A\oplus
A^{*},\cdot_{d},\mathcal{B}_{d}),(A,\cdot_{A}),(A^{*},\cdot_{A^{*}})
\big)$ a {\bf double construction of Frobenius commutative
algebra} \mcite{Bai2010}. In this case, the multiplication
$\cdot_{d}$ on $A\oplus A^{*}$ is given by
\vsa
\begin{equation}\mlabel{eq:commassomul}
(x+a^{*})\cdot_{d}(y+b^{*})=x\cdot_{A}y+\mathcal{L}^{*}_{\cdot_{A^{*}}}(b^{*})x+\mathcal{L}^{*}_{\cdot_{A^{*}}}(a^{*})y+
a^{*}\cdot_{A^{*}}b^{*}+\mathcal{L}^{*}_{\cdot_{A}}(y)a^{*}+\mathcal{L}^{*}_{\cdot_{A}}(x)b^{*}
\vsa
\end{equation}
for all $x,y\in A, a^{*},b^{*}\in A^{*}$.

\begin{defi}\mlabel{defi:aFca}
Let $\big( (A\oplus
A^{*},\cdot_{d},\mathcal{B}_{d}),(A,\cdot_{A}),(A^{*},\cdot_{A^{*}})
\big)$ be a double construction of Frobenius commutative algebra.
Let $P:A\rightarrow A$ be an averaging operator on
$(A,\cdot_{A})$ and $Q^{*}:A^{*}\rightarrow A^{*}$ be an averaging
operator on $(A^{*},\cdot_{A^{*}})$. If $P+Q^{*}$ is an averaging
operator on $(A\oplus A^{*},\cdot_{d})$, then we call $\big(
(A\oplus A^{*},\cdot_{d},P+Q^{*},\mathcal{B}_{d}), (A,\cdot_{A},P),(A^{*},\cdot_{A^{*}},Q^{*})
\big)$ a {\bf double construction of averaging Frobenius
commutative algebra}.
\end{defi}

Next we recall the notion of commutative and cocommutative infinitesimal bialgebras.

\begin{defi}\mcite{Bai2010}
A {\bf cocommutative (coassociative) coalgebra} is a pair $(A,\Delta)$, where $A$ is
a vector space and $\Delta:A\rightarrow A\otimes A$ is a
co-multiplication such that the following equations hold.
\vsa
\begin{equation*}
\Delta=\tau\Delta,\;(\Delta\otimes\mathrm{id})\Delta=(\mathrm{id}\otimes\Delta)\Delta.
\vsa
\end{equation*}
A {\bf commutative and cocommutative infinitesimal
bialgebra}
is a triple $(A,\cdot_{A},\Delta)$ such that $(A,\cdot_{A})$ is a
commutative algebra, $(A,\Delta)$ is a cocommutative coalgebra and the
following equation holds.
\vsa
\begin{equation}\mlabel{eq:bib}
\Delta(x\cdot_{A}y)=\big(\mathcal{L}_{\cdot_{A}}(x)\otimes\mathrm{id}\big)\Delta(y)+\big(\mathrm{id}\otimes\mathcal{L}_{\cdot_{A}}(y)\big)\Delta(x),\;\forall x,y\in A.
\vsa
\end{equation}
\end{defi}

\begin{thm}\mcite{Bai2010}
Let $(A,\cdot_{A})$ be a commutative algebra. Suppose that there is a commutative algebra structure $(A^{*},\cdot_{A^{*}})$ on the dual space $A^{*}$ and $\Delta:A\rightarrow A\otimes A$ is the linear dual of $\cdot_{A^{*}}$, that is,
\vsa
\begin{equation*}
\langle\Delta(x), a^{*}\otimes b^{*}\rangle=\langle x, \Delta^{*}(a^{*}\otimes b^{*})\rangle=\langle x, a^{*}\cdot_{A^{*}}b^{*}\rangle,\;\forall x\in A, a^{*},b^{*}\in A^{*}.
\vsa
\end{equation*}
Then there is a double construction of Frobenius commutative algebra $\big( (A\oplus A^{*},\cdot_{d},\mathcal{B}_{d}),(A$,
$\cdot_{A})$,
$(A^{*},\cdot_{A^{*}}) \big)$ if and only if $(A,\cdot_{A},\Delta)$ is a commutative and cocommutative infinitesimal bialgebra.
\end{thm}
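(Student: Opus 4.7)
The plan is to establish the equivalence by reducing both sides to a condition on the unique mixed‐associativity axiom of $\cdot_{d}$, and then translating that axiom across the duality $\Delta\leftrightarrow\cdot_{A^{*}}$ into \meqref{eq:bib}.

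First I would verify the easy half of a double construction directly from the formula \meqref{eq:commassomul}. Commutativity of $\cdot_{d}$ is inherited from the commutativity of $\cdot_{A}$ and $\cdot_{A^{*}}$ together with the obvious symmetry of the cross terms $\mathcal{L}^{*}_{\cdot_{A}}$ and $\mathcal{L}^{*}_{\cdot_{A^{*}}}$ in the roles of $x,y$ versus $a^{*},b^{*}$. The invariance of $\mathcal{B}_{d}$ given by \meqref{eq:bfds} is then a direct pairing check: on pure $A$-by-$A$ or $A^{*}$-by-$A^{*}$ inputs both sides pair to $0$, while mixed inputs reduce to the defining identity $\langle \mathcal{L}^{*}_{\cdot_{A}}(x)a^{*},y\rangle=\langle a^{*},x\cdot_{A}y\rangle$ and its dual. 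Conversely, assuming we are given any double construction, the invariance of $\mathcal{B}_{d}$ together with the requirement that $(A,\cdot_{A})$ and $(A^{*},\cdot_{A^{*}})$ be subalgebras forces the cross terms of $\cdot_{d}$ to have precisely the form in \meqref{eq:commassomul}; this is the same argument as in the Frobenius/ASI setting and only uses nondegeneracy of $\mathcal{B}_{d}$.

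The whole content then sits in the associativity of $\cdot_{d}$. By commutativity of $\cdot_{d}$ and the fact that $A$ and $A^{*}$ are already associative subalgebras, it suffices to check associativity on the mixed triples $(x,y,a^{*})$ with $x,y\in A$, $a^{*}\in A^{*}$ (the triple $(x,a^{*},b^{*})$ is forced by commutativity and the former case). I would compute $(x\cdot_{d}y)\cdot_{d}a^{*}$ and $x\cdot_{d}(y\cdot_{d}a^{*})$ using \meqref{eq:commassomul}, isolate the $A$-component and the $A^{*}$-component, and collect terms. The resulting two identities, after pairing against arbitrary $b^{*}\in A^{*}$ (resp.\ $z\in A$) and using the defining relation $\langle \Delta(x),a^{*}\otimes b^{*}\rangle=\langle x,a^{*}\cdot_{A^{*}}b^{*}\rangle$, become exactly
\[
\Delta(x\cdot_{A}y)=\bigl(\mathcal{L}_{\cdot_{A}}(x)\otimes\mathrm{id}\bigr)\Delta(y)+\bigl(\mathrm{id}\otimes\mathcal{L}_{\cdot_{A}}(y)\bigr)\Delta(x),
\]
which is \meqref{eq:bib}. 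Since each step is an iff under nondegenerate pairing, this gives the equivalence in both directions.

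The main obstacle is bookkeeping in the mixed-associativity computation: there are six terms on each side, and one must carefully split the output into its $A$ and $A^{*}$ parts and recognize that both resulting equations are equivalent (modulo $\tau\Delta=\Delta$ and commutativity of $\cdot_{A}$) to the single compatibility \meqref{eq:bib}. The cocommutativity $\Delta=\tau\Delta$, which is needed to collapse the two dual identities into one, arises from the commutativity of $\cdot_{A^{*}}$, and coassociativity of $\Delta$ is dual to the associativity of $\cdot_{A^{*}}$ on $A^{*}\subseteq A\oplus A^{*}$; both are therefore automatic in the bialgebra side. With these observations the equivalence is established.
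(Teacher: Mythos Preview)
The paper does not supply its own proof for this statement; it is quoted from \mcite{Bai2010} as a known result, so there is no in-paper argument to compare against. Your outline is correct and follows the standard route used in that reference: fix $\cdot_{d}$ by the formula \meqref{eq:commassomul} (forced by invariance of $\mathcal{B}_{d}$ and the subalgebra conditions), observe that commutativity and invariance are automatic, and then identify the nontrivial mixed associativity constraint with \meqref{eq:bib} via the duality $\langle\Delta(x),a^{*}\otimes b^{*}\rangle=\langle x,a^{*}\cdot_{A^{*}}b^{*}\rangle$.

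One small sharpening: your claim that the $(x,a^{*},b^{*})$ associator is ``forced by commutativity and the former case'' is true but deserves a more precise justification. The cleanest way is to use the already-established invariance of $\mathcal{B}_{d}$, which gives $\mathcal{B}_{d}\big((u,v,w),t\big)=-\mathcal{B}_{d}\big(u,(v,w,t)\big)$ for the associator; taking $u=x$, $v=y$, $w=a^{*}$, $t=b^{*}$ then transfers the vanishing of the $(x,y,a^{*})$ associator to that of the $(y,a^{*},b^{*})$ associator by nondegeneracy. Alternatively one can observe, as you indicate, that the dual compatibility on $A^{*}$ paired back against $A\otimes A$ reproduces \meqref{eq:bib} after one use of $\tau\Delta=\Delta$; either way the reduction to a single condition is legitimate.
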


\begin{defi}\mlabel{defi:ave com ASI bialgebra}
An {\bf averaging commutative and cocommutative
infinitesimal bialgebra}
is a vector space $A$
together with linear maps
\vsa
\begin{equation*}
\cdot_{A}:A\otimes A\rightarrow A,\; \Delta:A\rightarrow A\otimes A, \; P,Q:A\rightarrow A
\vsa
\end{equation*}
such that the following conditions are satisfied.
\begin{enumerate}
\item The triple $(A,\cdot_{A},\Delta)$ is a commutative and cocommutative infinitesimal bialgebra.
\item The quadruple $(A,\cdot_{A},P,Q)$ is an admissible averaging commutative algebra.
\item The following equalities hold.
\vsb
\begin{eqnarray}
    (Q\otimes Q)\Delta(x)&=&(Q\otimes\mathrm{id})\Delta\big(Q(x)\big),\mlabel{eq:aoco1}\\
    (Q\otimes P)\Delta(x)&=&(Q\otimes\mathrm{id})\Delta\big(P(x)\big)=(\mathrm{id}\otimes P)\Delta\big(P(x)\big),\;\forall x\in A. \mlabel{eq:aoco2}
\vsa
\end{eqnarray}
\end{enumerate}
We denote it by $(A,\cdot_{A},\Delta,P,Q)$.
\end{defi}

\begin{thm}\mlabel{thm:2.11}
Let $\big( (A\oplus
A^{*},\cdot_{d},\mathcal{B}_{d}),(A,\cdot_{A}),(A^{*},\cdot_{A^{*}})
\big)$ be a double construction of Frobenius commutative algebra,
and let $(A,\cdot_{A},\Delta)$ be the corresponding commutative and cocommutative infinitesimal bialgebra. Let $P, Q:A\to A$ be linear maps. Then the following statements are
equivalent.
\begin{enumerate}
\item\mlabel{thm:ao1} $\big( (A\oplus A^{*},\cdot_{d},P+Q^{*},\mathcal{B}_{d}),(A,\cdot_{A},P),(A^{*},\cdot_{A^{*}},Q^{*}) \big)$ is a double construction of averaging Frobenius commutative algebra.
\item\mlabel{thm:ao2} $(A,\cdot_{A},P,Q)$ and $(A^{*},\cdot_{A^{*}},Q^{*},P^{*})$ are admissible averaging commutative algebras, that is, \meqref{eq:aver op}, \meqref{eq:ao pair} and the following equations hold:
\vsb
\begin{eqnarray}
    &&Q^{*}(a^{*})\cdot_{A^{*}} Q^{*}(b^{*})=Q^{*}\big(Q^{*}(a^{*})\cdot_{A^{*}} b^{*}\big),\mlabel{eq:mp ao1}\\
    && Q^{*}(a^{*})\cdot_{A^{*}} P^{*}(b^{*})=P^{*}\big( Q^{*}(a^{*})\cdot_{A^{*}}b^{*}\big)=P^{*}\big(a^{*}\cdot_{A^{*}}P^{*}(b^{*})\big),\;\forall a^{*},b^{*}\in A^{*}.\mlabel{eq:mp ao2}
    \vsa
\end{eqnarray}
\item\mlabel{thm:ao3} $(A,\cdot_{A},\Delta,P,Q)$ is an averaging commutative and cocommutative infinitesimal bialgebra, that is, $(A,\cdot_{A},P,Q)$ is an admissible
averaging commutative algebra and satisfies
\meqref{eq:aoco1} and \meqref{eq:aoco2}.
\end{enumerate}
\end{thm}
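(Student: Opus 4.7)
The strategy is to leverage the known bijection between double constructions of Frobenius commutative algebras and commutative and cocommutative infinitesimal bialgebras recalled just before the theorem: this already absorbs the non-operator content, so I need only transport the averaging conditions across the three formulations. The proof naturally splits into two halves: (\mref{thm:ao1}) $\Leftrightarrow$ (\mref{thm:ao2}), handled by a direct bilinear expansion of the averaging axiom on $(A\oplus A^*,\cdot_d)$; and (\mref{thm:ao2}) $\Leftrightarrow$ (\mref{thm:ao3}), handled by duality through the pairing $\langle \Delta(x), a^*\otimes b^*\rangle = \langle x, a^*\cdot_{A^*} b^*\rangle$.

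For the first half, I would substitute arbitrary elements $x+a^*,\, y+b^* \in A\oplus A^*$ into
\[
(P+Q^*)(x+a^*) \cdot_d (P+Q^*)(y+b^*) = (P+Q^*)\bigl((P+Q^*)(x+a^*)\cdot_d (y+b^*)\bigr),
\]
and expand both sides using \meqref{eq:commassomul}. Since $P$ sends $A$ to $A$ and $Q^*$ sends $A^*$ to $A^*$, each side splits unambiguously into an $A$-summand and an $A^*$-summand. Matching the $A$-summands, the specialisation $a^*=b^*=0$ isolates \meqref{eq:aver op} for $P$, while the partial specialisations $b^*=0$ and $a^*=0$, each followed by pairing against an arbitrary $\gamma^*\in A^*$, produce the two equalities of \meqref{eq:mp ao2}. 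Matching the $A^*$-summands, the specialisation $x=y=0$ isolates \meqref{eq:mp ao1} for $Q^*$, while the partial specialisations $y=0$ and $x=0$, each followed by pairing against $z\in A$, produce the two equalities of \meqref{eq:ao pair}. The general unspecialised identity then decomposes as a linear sum of these four partial identities, so the converse implication follows by direct reassembly.

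For the second half, the content that $(A,\cdot_A,P,Q)$ is an admissible averaging commutative algebra appears verbatim in both (\mref{thm:ao2}) and (\mref{thm:ao3}), so I need only show that the conditions \meqref{eq:mp ao1} and \meqref{eq:mp ao2} on $(A^*,\cdot_{A^*},Q^*,P^*)$ translate under duality into \meqref{eq:aoco1} and \meqref{eq:aoco2}. Pairing \meqref{eq:mp ao1} against $x\in A$ and using $\langle a^*\cdot_{A^*} b^*, x\rangle = \langle \Delta(x), a^*\otimes b^*\rangle$ together with $\langle Q^*a^*,y\rangle = \langle a^*, Qy\rangle$, the left-hand side becomes $\langle (Q\otimes Q)\Delta(x), a^*\otimes b^*\rangle$ and the right-hand side becomes $\langle (Q\otimes\mathrm{id})\Delta(Qx), a^*\otimes b^*\rangle$; nondegeneracy of the pairing then delivers \meqref{eq:aoco1}, and the implication is reversible. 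An identical pairing argument converts each of the two equalities in \meqref{eq:mp ao2} into one of the two equalities in \meqref{eq:aoco2}.

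The main obstacle is the bookkeeping in the first half: each side of the averaging identity carries six summands from \meqref{eq:commassomul}, and after applying $P+Q^*$ to the right-hand side the mixed cross-terms $P\bigl(\mathcal{L}^*_{\cdot_{A^*}}(b^*)P(x)\bigr)$, $Q^*\bigl(\mathcal{L}^*_{\cdot_A}(y)Q^*(a^*)\bigr)$ and their analogues must be matched across the corresponding summands on the left. The cleanest way to manage this is to process the four specialisations of $(x,y,a^*,b^*)$ one at a time, and to read each mixed term through its pairing dual so that every equation involving $\mathcal{L}^*_{\cdot_{A^*}}$ or $\mathcal{L}^*_{\cdot_A}$ becomes an equation purely about multiplication in $(A,\cdot_A)$ or $(A^*,\cdot_{A^*})$; this mirrors the duality mechanism already recorded in Proposition \mref{pro:dual ao}.
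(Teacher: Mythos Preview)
Your proposal is correct and follows essentially the same route as the paper: the paper expands both sides of the averaging identity on $(A\oplus A^*,\cdot_d)$ via \meqref{eq:commassomul}, identifies the resulting component equations (your specialisations correspond exactly to isolating the paper's \meqref{eq:pq1} and \meqref{eq:pq3}), and then dualises these to \meqref{eq:mp ao2} and \meqref{eq:ao pair}; the (\mref{thm:ao2})$\Leftrightarrow$(\mref{thm:ao3}) step is carried out by the same pairing computation you describe.
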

\begin{proof}
(\mref{thm:ao1})$\Longleftrightarrow$(\mref{thm:ao2}). Let $x,y\in
A, a^{*},b^{*}\in A^{*}$. Then we have
\vsb
\begin{eqnarray*}
    &&(P+Q^{*})(x+a^{*})\cdot_{d}(P+Q^{*})(y+b^{*})\\
    &&\overset{\meqref{eq:commassomul}}{=}P(x)\cdot_{A} P(y)+\mathcal{L}^{*}_{\cdot_{A^{*}}}\big(Q^{*}(a^{*})\big)P(y)+\mathcal{L}^{*}_{\cdot_{A^{*}}}\big(Q^{*}(b^{*})\big)P(x)\\
    &&\ \
    +Q^{*}(a^{*})\cdot_{A^{*}}Q^{*}(b^{*})+\mathcal{L}^{*}_{\cdot_{A}}\big(P(x)\big)Q^{*}(b^{*})+\mathcal{L}^{*}_{\cdot_{A}}\big(P(y)\big)Q^{*}(a^{*}),\\
    &&(P+Q^{*})\big( (P+Q^{*})(x+a^{*})\cdot_{d} (y+b^{*})\big)\\
    &&\overset{\meqref{eq:commassomul}}{=}P\big(P(x)\cdot_{A}y\big)+P\big(\mathcal{L}^{*}_{\cdot_{A^{*}}}(b^{*})P(x)\big)+P\Big(\mathcal{L}^{*}_{\cdot_{A^{*}}}\big(Q^{*}(a^{*})\big)y\Big)\\
    &&\ \ +Q^{*}\big(Q^{*}(a^{*})\cdot_{A^{*}}b^{*}\big)+Q^{*}\Big(\mathcal{L}^{*}_{\cdot_{A}}\big(P(x)\big)b^{*}\Big)+Q^{*}\big(\mathcal{L}^{*}_{\cdot_{A}}(y)Q^{*}(a^{*})\big).
\end{eqnarray*}Hence $P+Q^{*}$ is an averaging operator on $(A\oplus
A^{*},\cdot_{d})$ if and only if \meqref{eq:aver op}, \meqref{eq:mp
ao1} and the following equations hold.
\vsb
\begin{eqnarray}
&&\mathcal{L}^{*}_{\cdot_{A^{*}}}\big(Q^{*}(a^{*})\big)P(x)=P\Big(\mathcal{L}^{*}_{\cdot_{A^{*}}}\big(Q^{*}(a^{*})\big)x\Big)=P\big(\mathcal{L}^{*}_{\cdot_{A^{*}}}(a^{*})P(x)\big),\mlabel{eq:pq1}\\
&&\mathcal{L}^{*}_{\cdot_{A}}\big(P(x)\big)Q^{*}(a^{*})=Q^{*}\Big(\mathcal{L}^{*}_{\cdot_{A}}\big(P(x)\big)a^{*}\Big)=Q^{*}\big(\mathcal{L}^{*}_{\cdot_{A}}(x)Q^{*}(a^{*})\big).\mlabel{eq:pq3}
\end{eqnarray}
By a  direct verification, we have
\vsc
$$ \meqref{eq:pq1}\Longleftrightarrow \meqref{eq:mp ao2},\;   \meqref{eq:pq3}\Longleftrightarrow \meqref{eq:ao pair}.
\vsa
$$
Hence Item~(\mref{thm:ao1}) holds if and only if
Item~(\mref{thm:ao2}) holds.

(\mref{thm:ao2})$\Longleftrightarrow$(\mref{thm:ao3}).
 For all $x\in
A, a^{*},b^{*}\in A^{*}$, we have
\vsb
\begin{eqnarray*}
&&\langle Q^{*}(a^{*})\cdot_{A^{*}} Q^{*}(b^{*}),x\rangle=\langle \Delta^{*}(Q^{*}\otimes Q^{*})a^{*}\otimes b^{*},x\rangle=\langle a^{*}\otimes b^{*},(Q\otimes Q)\Delta(x)\rangle,\\
&&\langle Q^{*}\big(Q^{*}(a^{*})\cdot_{A^{*}} b^{*}\big),x\rangle=\langle Q^{*}\big(\Delta^{*}(Q^{*}\otimes\mathrm{id})a^{*}\otimes b^{*}\big),x\rangle=\langle a^{*}\otimes b^{*},(Q\otimes\mathrm{id})\Delta\big(Q(x)\big)\rangle.
\vsa
\end{eqnarray*}
Hence \meqref{eq:mp ao1} holds if and only if \meqref{eq:aoco1}
holds. Similarly, \meqref{eq:mp ao2} holds if and only if
\meqref{eq:aoco2} holds. Therefore Item~(\mref{thm:ao2}) holds if
and only if Item~(\mref{thm:ao3}) holds.
\vsd\end{proof}

\section{Symmetric left-invariant bilinear forms on perm algebras, a new splitting of perm algebras and \sdpps}\mlabel{sec:4}

We  first study nondegenerate symmetric left-invariant bilinear forms on perm
algebras,  naturally arising from symmetric Frobenius commutative
algebras with averaging operators. Then we introduce the notion of
\dpps as a new splitting of the operation of perm algebras. In
particular, the notion of \sdpps which are \dpps with one of the
operations being commutative is introduced as the underlying
algebra structures of nondegenerate symmetric
left-invariant bilinear forms on perm algebras. We also study
quadratic \sdpps, which equivalently give rise to nondegenerate
symmetric left-invariant bilinear forms on perm algebras.

\subsection{Nondegenerate symmetric left-invariant bilinear forms on perm algebras} \mlabel{sec2.1}\

\begin{pro}\mlabel{pro:2.1}
Let $(A,\cdot_{A})$ be a commutative algebra with an
averaging operator $P:A\rightarrow A$, and let $(A,\circ_A)$ be the induced perm algebra $(A,\circ_{A})$ given in ~\meqref{eq:perm from aver op}.
If there is an invariant bilinear form $\mathcal{B}$ on
$(A,\cdot_{A})$, then $\mathcal{B}$ is  left-invariant on
$(A,\circ_{A})$ in the sense that
\begin{equation}\mlabel{eq:li}
\mathcal{B}(x\circ_{A} y,z)=\mathcal{B}(y,x\circ_{A} z),\;\forall x,y,z\in A.
\end{equation}
\end{pro}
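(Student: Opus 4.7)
The plan is to prove the statement by a direct one-line computation that unpacks the definition of $\circ_{A}$ and then applies the invariance of $\mathcal{B}$ on $(A,\cdot_{A})$ together with the commutativity of $\cdot_{A}$. No properties of the averaging operator $P$ beyond its existence will actually be needed; it is only used through the defining formula $x\circ_{A}y = P(x)\cdot_{A}y$.

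Concretely, for arbitrary $x,y,z \in A$, I would first rewrite
\[
\mathcal{B}(x\circ_{A}y,z) = \mathcal{B}(P(x)\cdot_{A}y,z),
\]
then use the commutativity of $\cdot_{A}$ to swap $P(x)$ and $y$, obtaining $\mathcal{B}(y\cdot_{A}P(x),z)$. Next, I would apply the invariance of $\mathcal{B}$ on $(A,\cdot_{A})$, which gives $\mathcal{B}(y,P(x)\cdot_{A}z)$. Finally, re-reading $P(x)\cdot_{A}z$ as $x\circ_{A}z$ via the definition yields $\mathcal{B}(y,x\circ_{A}z)$, which is exactly the left-invariance identity \eqref{eq:li}.

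There is really no main obstacle here: the only potentially subtle point is making sure we use the correct form of ``invariant bilinear form on a commutative associative algebra'', namely $\mathcal{B}(x\cdot_{A}y,z)=\mathcal{B}(x,y\cdot_{A}z)$, and that commutativity is available to bring $P(x)$ into the second slot of the multiplication. Both are immediate. In the write-up I would keep the chain of equalities on a single line, flagging each step (definition of $\circ_{A}$, commutativity, invariance, definition of $\circ_{A}$) to make the logical flow transparent and to emphasize that the argument is independent of the specific averaging identities satisfied by $P$.
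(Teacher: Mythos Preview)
Your proposal is correct and matches the paper's own proof essentially line for line: the paper writes the chain $\mathcal{B}(x\circ_{A}y,z)=\mathcal{B}(P(x)\cdot_{A}y,z)=\mathcal{B}(y,P(x)\cdot_{A}z)=\mathcal{B}(y,x\circ_{A}z)$, merely absorbing the commutativity step into the invariance step. There is nothing to add or change.
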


\begin{proof}
For all $x,y,z\in A$, we have
\begin{equation*}
\mathcal{B}(x\circ _{A}y,z)=\mathcal{B}\big(P (x)\cdot
_{A}y,z\big)= \mathcal{B}\big(y,P (x)\cdot
_{A}z\big)=\mathcal{B}(y,x\circ _{A}z).
\end{equation*}
Hence the conclusion follows.
\end{proof}

\begin{ex}\mlabel{ex:2.2}
Under the assumptions in Example \mref{pre-ex:2.2}, the induced perm algebra $(A\oplus V,\circ_{d})$ is given by
\begin{equation}\mlabel{eq:2.2}
(x+u)\circ_{d}(y+v)=P(x+u)\cdot_{d}(y+v)=x\cdot_{A} y+\mu(x)v,\;\forall x,y\in A, u,v\in V.
\end{equation}
When $V=A^{*}, \mu=\mathcal{L}^{*}_{\cdot_{A}}$,
we have
\begin{equation}\mlabel{eq:11}
(x+a^{*})\circ_{d}(y+b^{*})=x\cdot_{A} y+\mathcal{L}^{*}_{\cdot_{A}}(x)b^{*},\;\forall x,y\in A, a^{*},b^{*}\in A^{*},
\end{equation}
and by Proposition \mref{pro:2.1}, $\mathcal{B}_{d}$ is left-invariant on  $(A\oplus A^{*},\circ_{d})$.
\end{ex}

\begin{ex}\mlabel{ex:2.3}
Under the assumptions in Example \mref{pre-ex:2.3}, the induced perm algebra $(A\otimes A,\circ)$ is given by
\begin{equation}\mlabel{eq:ex:2.3}
(x\otimes y)\circ(z\otimes w)=P(x\otimes y)\cdot (z\otimes
w)=x\cdot_{A} z\otimes y\cdot_{A} w+y\cdot_{A} z\otimes x\cdot_{A}
w, \quad \forall x,y,z,w\in A.
\end{equation}
By Proposition \mref{pro:2.1}, $\mathcal{B}'$ is also left-invariant on $(A\otimes A,\circ)$.
\end{ex}

\begin{ex}\mlabel{ex:multiplication2}
Under the assumptions in Example \mref{ex:multiplication}, the
nonzero products of the induced perm algebra $(A,\circ_{A})$ are
given as follows.
\begin{eqnarray}
e_{1}\circ_{A} e_{1}=e_{1},\; e_{1}\circ_{A} e_{2}=e_{2}\circ_{A} e_{1}=e_{2},\; e_{1}\circ_{A} e_{3}=e_{2}\circ_{A} e_{4}=e_{3},\; e_{1}\circ_{A} e_{4}=e_{4}.
\end{eqnarray}
Moreover, the nondegenerate symmetric bilinear form $\mathcal{B}$ given by
\meqref{eq:10.0} is left-invariant on $(A,\circ_{A})$.
\end{ex}

Recall that a \textbf{commutative $2$-cocycle} \mcite{Dzh2010} on a Lie algebra $(\mathfrak{%
g},[-,-]_{\mathfrak{g}})$ is a symmetric bilinear form on $\mathfrak{g}$
such that
\begin{equation}\mlabel{eq:c2c}
\mathcal{B}([x,y]_{\mathfrak{g}},z)+\mathcal{B}([y,z]_{\mathfrak{g}},x)+%
\mathcal{B}([z,x]_{\mathfrak{g}},y)=0,\;\forall x,y,z\in \mathfrak{g}.
\end{equation}
Commutative $2$-cocycles appear in the study of non-associative algebras
satisfying certain antisymmetric identities \mcite{Dzh2009}, and also in the
description of the second cohomology of current Lie algebras \mcite{Zus}.

Let $(A,\circ_{A})$ be a perm algebra. Then obviously the
multiplication $[-,-]_A$ on $A$ defined by
\begin{equation}
[x,y]_A=x\circ_A y-y\circ_A x,\;\;\forall x,y\in A,
\end{equation}
makes $(A,[-,-]_A)$ into a Lie algebra, called the {\bf
sub-adjacent Lie algebra} of $(A,\circ_A)$.

By a straightforward checking, we have
\begin{pro}\mlabel{pro:4.6}
Let $(A,\circ_{A})$ be a perm algebra and $(A$,
$[-,-]_{A})$ be the sub-adjacent Lie algebra.
If $\mathcal{B}$ is a symmetric left-invariant bilinear form on $(A,\circ_{A})$, then $\mathcal{B}$ is a commutative $2$-cocycle on $(A,[-,-]_{A})$.
\end{pro}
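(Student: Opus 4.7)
The plan is to verify the commutative $2$-cocycle identity \eqref{eq:c2c} directly by expansion. Since $\mathcal{B}$ is already symmetric by hypothesis, only the cocycle identity needs checking, and the proof amounts to showing that six signed summands cancel in three pairs.

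First, I would expand the left-hand side of \eqref{eq:c2c} using the definition $[x,y]_A = x\circ_A y - y\circ_A x$, producing six terms of the form $\pm\mathcal{B}(a\circ_A b, c)$ where $(a,b,c)$ ranges over the relevant permutations of $(x,y,z)$. Then I would apply the left-invariance condition $\mathcal{B}(a\circ_A b, c) = \mathcal{B}(b, a\circ_A c)$ to each summand so as to push the perm product into the second slot.

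The key step is to pair up the six resulting terms. For each pair, one combines the symmetry of $\mathcal{B}$ with a second application of left-invariance: for instance, $\mathcal{B}(y, x\circ_A z) = \mathcal{B}(x\circ_A z, y) = \mathcal{B}(z, x\circ_A y)$, which cancels against the term $-\mathcal{B}(z, x\circ_A y)$ coming from the expansion of $\mathcal{B}([z,x]_A, y)$. The two remaining pairs, involving $\mathcal{B}(x, y\circ_A z) \leftrightarrow \mathcal{B}(z, y\circ_A x)$ and $\mathcal{B}(y, z\circ_A x) \leftrightarrow \mathcal{B}(x, z\circ_A y)$, cancel by the same mechanism under the cyclic shift of variables.

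I do not anticipate any real obstacle: the argument is purely a bookkeeping reshuffling of six bilinear-form evaluations. Notably, the perm axiom \eqref{eq:perm} is not invoked — the conclusion follows from the symmetry of $\mathcal{B}$ and its left-invariance with respect to the single operation $\circ_A$ alone. This explains the author's phrasing ``by a straightforward checking.''
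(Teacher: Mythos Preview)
Your proposal is correct and matches the paper's approach exactly: the paper omits the proof entirely, remarking only ``By a straightforward checking,'' and your expansion-and-cancellation argument is precisely that straightforward check. Your observation that the perm axiom~\eqref{eq:perm} is never invoked is also accurate.
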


\subsection{A new splitting of perm algebras and \dpps}\

First we recall the notion of representations of perm algebras.

\begin{defi}\mcite{Hou,LZB}
A \textbf{representation} of a perm algebra $(A,\circ_{A})$ is a
triple $(l,r,V)$, in which $V$ is a vector space, and
$l,r:A\rightarrow\mathrm{End}_{\mathbb K}(V)$ are linear maps satisfying
\begin{eqnarray}
&&l(x\circ_{A} y)v=l(x)l(y)v=l(y)l(x)v,\mlabel{eq:rep1}\\
&&r(x\circ_{A} y)v=r(y)r(x)v=r(y)l(x)v=l(x)r(y)v,\;\forall x,y\in A, v\in V .\mlabel{eq:rep2}
\end{eqnarray}
Two representations $(l_{1},r_{1},V_{1})$ and $(l_{2},r_{2},V_{2})$ of $(A,\circ_{A})$ are called \textbf{equivalent} if there exists a linear isomorphism $\phi:V_{1}\rightarrow V_{2}$ such that
\begin{equation}\mlabel{eq:eq perm rep}
\phi l_{1}(x)=l_{2}(x)\phi,\; \phi r_{1}(x)=r_{2}(x)\phi,\;\;\forall x\in A.
\end{equation}
\end{defi}

In fact, for a vector space $V$ and linear maps $l,r:A\rightarrow
\mathrm{End}_{\mathbb K}(V)$, the triple $(l,r,V)$ is a
representation of the perm algebra $ (A,\circ_{A})$ if and only if
there is a perm algebra structure  on the direct sum $A\oplus V$
of vector spaces  given by
\begin{equation}
(x+u)\circ_{d} (y+v)=x\circ _{A}y+l(x)v+r(y)u,\;\;\forall x,y\in
A, u,v\in V. \mlabel{eq:sd perm}
\end{equation}
We denote the perm algebra structure on $A\oplus V$  by $A\ltimes
_{l,r}V$. Hence we can let $A\ltimes _{\mu,0}V$ denote the perm algebra given in
\meqref{eq:2.2}.

\begin{ex}
Let $(A,\circ_{A})$ be a perm algebra.
Then $(\mathcal{L}_{\circ_{A}},\mathcal{R}_{\circ_{A}},A)$ is a representation of $(A,\circ_{A})$, which is called the \textbf{adjoint representation} of $(A,\circ_{A})$.
\end{ex}

Now we introduce the notion of \dpps as a new type of splitting of perm
algebras.

\begin{defi}\mlabel{defi:generic pre-perm algebra2}
    Let $A$ be a vector space with multiplications $\triangleright_{A},\triangleleft_{A}:A\otimes A\rightarrow A$.
   Define a multiplication
    $\circ_{A}:A\otimes A\rightarrow A$ by
    \begin{equation}
    \circ_A:= \triangleright_{A}+\triangleleft_{A}
    \mlabel{eq:dpp2}
    \end{equation}
as in~\meqref{eq:Dpp}. Define $\bullet_A: A\otimes A\rightarrow A$
by
\begin{equation}\mlabel{eq:sum}
    x\bullet_{A}y:=x\triangleright_{A}y +y\triangleleft_{A}x,\;\;\forall x,y\in     A.
\end{equation}
If $(A,\circ_{A})$ is a perm algebra and satisfies the equalities
    \begin{eqnarray}
        &&(x\circ_{A}y)\bullet_{A}z=x\bullet_{A}(y\bullet_{A}z)=y\bullet_{A}(x\bullet_{A}z),\mlabel{eq:gppa2,2}\\
        &&z\triangleleft_{A}(x\circ_{A}y)=-(z\triangleleft_{A}y)\triangleleft_{A}x=x\bullet_{A}(z\triangleleft_{A}y)=(x\bullet_{A} z)\triangleleft_{A}y,\;\forall x,y,z\in A,\mlabel{eq:gppa3,2}
    \end{eqnarray}
then we call $(A,\triangleright_{A},\triangleleft_{A})$ an {\bf
\dpp} (with apre in short for averaging-pre).
\end{defi}

\begin{pro}\mlabel{pro:SPA}\mlabel{pro:1548}
Let $A$ be a vector space with the multiplications $\triangleright_{A}
    ,\triangleleft_{A}:A\otimes A\rightarrow A$.
    Define multiplications $\circ_{A},\bullet_{A}:A\otimes A\rightarrow A$ by
    \meqref{eq:dpp2} and \meqref{eq:sum} respectively.
The following statements are equivalent.
\begin{enumerate}
\item $(A,\triangleright_{A},\triangleleft_{A})$ is an \dpp;
\mlabel{i:spa1}
\item
 $(A,\circ_{A})$ is a perm algebra of which $(  \mathcal{L}^{*}_{\bullet_{A}}=\mathcal{L}^{*}_{\triangleright_{A}}+\mathcal{R}^{*}_{\triangleleft_{A}},-\mathcal{R}^{*}_{\triangleleft_{A}},A^{*})$ is a representation;
\mlabel{i:spa2}
\item
 $(A,\circ_{A})$ is a perm algebra of which $(\mathcal{L}_{\bullet_{A}}=\mathcal{L}_{\triangleright_{A}}+\mathcal{R}_{\triangleleft_{A}},\mathcal{L}_{\bullet_{A}}+\mathcal{R}_{\triangleleft_{A}},A)$ is a representation.
 \mlabel{i:spa3}
\end{enumerate}
\end{pro}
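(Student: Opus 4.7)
The plan is to establish $(1)\Longleftrightarrow(3)$ and $(1)\Longleftrightarrow(2)$ separately. Since all three statements include the hypothesis that $(A,\circ_A)$ is a perm algebra, the real content in each case reduces to verifying that the stated representation condition is equivalent to the pair of identities \meqref{eq:gppa2,2}--\meqref{eq:gppa3,2} defining an apre-perm algebra.

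For $(1)\Longleftrightarrow(3)$, I would substitute $l := \mathcal{L}_{\bullet_A}$ and $r := \mathcal{L}_{\bullet_A} + \mathcal{R}_{\triangleleft_A}$ directly into the perm-representation axioms \meqref{eq:rep1}--\meqref{eq:rep2}, so that $l(x)v = x\bullet_A v$ and $r(x)v = x\bullet_A v + v\triangleleft_A x$. The $l$-axiom $l(x\circ_A y)v = l(x)l(y)v = l(y)l(x)v$ unpacks verbatim as $(x\circ_A y)\bullet_A v = x\bullet_A(y\bullet_A v) = y\bullet_A(x\bullet_A v)$, which is \meqref{eq:gppa2,2}. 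For the $r$-axiom, each occurrence of $r(\cdot)v$ splits into a $\bullet_A$-part and a $\triangleleft_A$-part; once \meqref{eq:gppa2,2} is used to absorb the $\bullet_A$-parts, the three residual identities rearrange into the four-term chain \meqref{eq:gppa3,2}.

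For $(1)\Longleftrightarrow(2)$, I would use transpose duality. Pairing each operator identity on $A^*$ with an arbitrary $w\in A$, via $\langle\mathcal{L}^*_{\bullet_A}(x)v,w\rangle = \langle v, x\bullet_A w\rangle$ and $\langle\mathcal{R}^*_{\triangleleft_A}(x)v,w\rangle = \langle v, w\triangleleft_A x\rangle$, converts the representation axioms into identities on $A$ in terms of $\bullet_A$ and $\triangleleft_A$. The reversal of composition order under $(fg)^* = g^*f^*$ is consistent with the symmetry already present in \meqref{eq:gppa2,2}, while the minus sign in $r = -\mathcal{R}^*_{\triangleleft_A}$ produces precisely the minus sign in \meqref{eq:gppa3,2}. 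Consequently the four conditions obtained by dualizing \meqref{eq:rep2} translate into the four equalities of \meqref{eq:gppa3,2}.

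The main obstacle is purely bookkeeping in the $r$-axiom. In each direction, the three equalities of \meqref{eq:rep2} expand into several relations mixing $\bullet_A$ and $\triangleleft_A$, and it is not immediately transparent which residual identity corresponds to which link in the chain \meqref{eq:gppa3,2}. Careful application of \meqref{eq:gppa2,2} to cancel the $\bullet_A$-contributions is required to match the surviving relations with the exact four equalities of \meqref{eq:gppa3,2}; once this reduction is organized, both directions of both equivalences follow by direct verification.
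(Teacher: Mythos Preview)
Your proposal is correct. The equivalence $(1)\Longleftrightarrow(2)$ via dual pairing is exactly what the paper does; the paper spells out only the three pairings needed for \meqref{eq:rep1} and then asserts the analogous equivalence for \meqref{eq:rep2}, just as you outline.

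The difference lies in how statement $(3)$ is reached. You propose to verify $(1)\Longleftrightarrow(3)$ by substituting $l=\mathcal{L}_{\bullet_A}$, $r=\mathcal{L}_{\bullet_A}+\mathcal{R}_{\triangleleft_A}$ directly into \meqref{eq:rep1}--\meqref{eq:rep2} and cancelling the $\bullet_A$-parts via \meqref{eq:gppa2,2}; this works, and the three residual identities from \meqref{eq:rep2} do assemble into the chain \meqref{eq:gppa3,2} after one relabeling. The paper instead gets $(2)\Longleftrightarrow(3)$ in one line by invoking the general fact (cited from \mcite{Hou,LZB}) that $(l,r,V)$ is a representation of a perm algebra if and only if $(l^*,l^*-r^*,V^*)$ is, and then specializing to $l=\mathcal{L}_{\bullet_A}$, $r=\mathcal{L}_{\bullet_A}+\mathcal{R}_{\triangleleft_A}$. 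Your route is self-contained and avoids the external citation at the cost of the bookkeeping you flag; the paper's route is shorter but relies on a pre-established duality for perm representations.
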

\begin{proof}
\meqref{i:spa1} $\Longleftrightarrow$ \meqref{i:spa2}. For all
$x,y,z\in A, a^{*}\in A^{*}$, we have
\begin{eqnarray*}
&&  \langle \mathcal{L}^{*}_{\bullet_{A}}(x\circ_{A}y)a^{*},z\rangle=\langle a^{*},(x\circ_{A}y)\bullet_{A}z\rangle,\\
&&\langle \mathcal{L}^{*}_{\bullet_{A}}(x)\mathcal{L}^{*}_{\bullet_{A}}(y)a^{*},z\rangle=\langle a^{*},y\bullet_{A}(x\bullet_{A}z)\rangle,\\
&&\langle \mathcal{L}^{*}_{\bullet_{A}}(y)\mathcal{L}^{*}_{\bullet_{A}}(x)a^{*},z\rangle=\langle a^{*},x\bullet_{A}(y\bullet_{A}z)\rangle.
\end{eqnarray*}
Thus \eqref{eq:gppa2,2} holds if and only if \eqref{eq:rep1} holds for $l=\mathcal{L}^{*}_{\bullet_{A}}$.
Similarly, \eqref{eq:gppa3,2} holds if and only if \eqref{eq:rep2} holds for $l=\mathcal{L}^{*}_{\bullet_{A}}, r=-\mathcal{R}^{*}_{\triangleleft_{A}}$.
Hence the equivalence follows.

\smallskip

\noindent \meqref{i:spa2} $\Longleftrightarrow$ \meqref{i:spa3}.
By \mcite{Hou,LZB}, $(l,r,V)$ is a representation of a perm
algebra $(A,\circ_{A})$ if and only if  $(l^{*}, l^{*}-r^{*},
V^{*})$ is a representation of $(A,\circ_{A})$. Hence the
conclusion follows by taking $l=\mathcal{L}_{\bullet_{A}},
r=\mathcal{L}_{\bullet_{A}}+\mathcal{R}_{\triangleleft_{A}},V=A$.
\end{proof}

\begin{lem}\mlabel{lem:312}
Let $(A,\circ_{A})$ be a perm algebra, $V$ be a vector space and
$l,r:A\rightarrow \mathrm{End}_{\mathbb K}(V)$ be linear maps.
Set linear maps
$\widetilde{l},\widetilde{r}:A\rightarrow \mathrm{End}_{\mathbb K}(V)$ by $\widetilde{l}=2l-r$ and $ \widetilde{r}=r-l$.
Then $(l,r,V)$ is a representation of $(A,\circ_{A})$ if and only
if the following equations hold for the triple $(\widetilde l,\widetilde r,V)$:
\begin{eqnarray}
    &&\widetilde l(y)\widetilde r(x)v+2\widetilde r(y)\widetilde r(x)v=0,\mlabel{equiv rep1}\\
    &&\widetilde r(x\circ_{A} y)v=-\widetilde r(x)\widetilde r(y)v=\widetilde r(y)(\widetilde l+\widetilde r)(x)v,\mlabel{equiv rep2}\\
    &&\widetilde l(x\circ_{A}y)v=(\widetilde l+\widetilde r)(x)\widetilde l(y)v=\widetilde l(y)(\widetilde l+\widetilde r)(x)v,\quad\forall x,y\in A, v\in V.\mlabel{equiv rep3}
\end{eqnarray}
\end{lem}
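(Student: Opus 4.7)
The plan is to exploit the fact that the substitution
$\widetilde{l}=2l-r,\ \widetilde{r}=r-l$ is an invertible linear change of variables on operator-valued maps, with inverse $l=\widetilde{l}+\widetilde{r}$ and $r=\widetilde{l}+2\widetilde{r}$. Consequently, verifying the lemma reduces to a direct bookkeeping exercise: the five identities making up the representation axioms \eqref{eq:rep1} and \eqref{eq:rep2} (namely $l(x\circ_A y)=l(x)l(y)$, $l(x)l(y)=l(y)l(x)$, $r(x\circ_A y)=r(y)r(x)$, $r(y)r(x)=r(y)l(x)$, $r(y)l(x)=l(x)r(y)$) and the five identities in the lemma statement (the one in \eqref{equiv rep1}, the pair in \eqref{equiv rep2}, and the pair in \eqref{equiv rep3}) must translate into each other under substitution.

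For the forward direction I would proceed identity by identity. Noting that $r-l=\widetilde{r}$ and $r=\widetilde{l}+2\widetilde{r}$, the axiom $r(y)r(x)=r(y)l(x)$ immediately rewrites as $r(y)\widetilde{r}(x)=0$, i.e. $(\widetilde{l}(y)+2\widetilde{r}(y))\widetilde{r}(x)=0$, which is exactly \eqref{equiv rep1}. Combining this with $r(x\circ_A y)=r(y)r(x)=r(y)l(x)=l(x)r(y)$ and substituting $r=\widetilde{l}+2\widetilde{r}$ then splitting off the $\widetilde{l}$ part versus the $\widetilde{r}$ part yields the two equalities in \eqref{equiv rep2}: first subtract the $\widetilde{l}$ contributions (using that $l(x\circ_A y)=l(x)l(y)=l(y)l(x)$) to isolate the identities involving only $\widetilde{r}$. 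An entirely parallel computation, using $2l-r=\widetilde{l}$ and $l=\widetilde{l}+\widetilde{r}$, derives \eqref{equiv rep3} from $l(x\circ_A y)=l(x)l(y)=l(y)l(x)$ by taking the combination $2\cdot(R1)-(R2\text{'s chain})$.

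For the converse, I would substitute $l=\widetilde{l}+\widetilde{r}$ and $r=\widetilde{l}+2\widetilde{r}$ into each of the five representation identities and verify each reduces to a consequence of \eqref{equiv rep1}--\eqref{equiv rep3}. For example, the axiom $r(y)r(x)=r(y)l(x)$ becomes $(\widetilde{l}(y)+2\widetilde{r}(y))(\widetilde{l}(x)+2\widetilde{r}(x))=(\widetilde{l}(y)+2\widetilde{r}(y))(\widetilde{l}(x)+\widetilde{r}(x))$, which after cancellation is $(\widetilde{l}(y)+2\widetilde{r}(y))\widetilde{r}(x)=0$, i.e. \eqref{equiv rep1}. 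The axioms for $r(x\circ_A y)$ follow by combining \eqref{equiv rep1} with the two equalities in \eqref{equiv rep2}, and similarly for $l(x\circ_A y)$ from \eqref{equiv rep3}.

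The main obstacle is not conceptual but organizational: the perm-representation axioms are expressed as chains of equalities rather than as independent identities, so one must be careful to match the right linear combinations on both sides, and to check that the resulting system of five identities has the same linear rank as the original. Once a clean correspondence table is set up between the rep identities and \eqref{equiv rep1}--\eqref{equiv rep3}, the verification is a routine computation in $\mathrm{End}_{\mathbb{K}}(V)$ with no subtlety beyond careful expansion.
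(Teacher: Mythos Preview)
Your approach is correct and is essentially the same as the paper's, which simply records the result as following from a straightforward verification. Your explicit use of the invertible change of variables $l=\widetilde{l}+\widetilde{r}$, $r=\widetilde{l}+2\widetilde{r}$ is exactly the bookkeeping the paper has in mind, and your sample computations (e.g.\ deriving \eqref{equiv rep1} from $r(y)r(x)=r(y)l(x)$) check out.
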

\begin{proof}
    It follows from a straightforward verification.
\end{proof}

\begin{rmk} Note that for a representation $(l,r,V)$ of a perm
algebra $(A,\circ_{A})$, $(\widetilde l,\widetilde r,V)$ might not
be a representation of $(A,\circ_{A})$. Moreover, let
$(A,\triangleright_{A},\triangleleft_{A})$ be an \dpp and define
multiplications $\circ_{A},\bullet_{A}:A\otimes A\rightarrow A$ by
\meqref{eq:dpp2} and \meqref{eq:sum} respectively. In the case
that $l=\mathcal{L}_{\bullet_{A}}$ and
$r=\mathcal{L}_{\bullet_{A}}+\mathcal{R}_{\triangleleft_{A}}$, we
have $\widetilde l=\mathcal{L}_{\triangleright_{A}}, \widetilde
r=\mathcal{R}_{\triangleleft_{A}}$. So Proposition~\ref{pro:1548}
can be rewritten in terms of $\mathcal{L}_{\triangleright_{A}},
\mathcal{R}_{\triangleleft_{A}}$.
\end{rmk}

Hence there is the following equivalent characterization of
\dpps.

\begin{pro}\mlabel{pdef:aa}
Let $A$ be a vector space with the multiplications
$\triangleright_{A},\triangleleft_{A}:A\otimes A\rightarrow A$.
Then $(A,\triangleright_{A},\triangleleft_{A})$ is an \dpp if and
only if, for all $x, y, z\in A$, the following equations hold.
\begin{eqnarray}
&&x\triangleright_{A}(y\triangleright_{A}z+y\triangleleft_{A}z)+x\triangleleft_{A}(y\triangleright_{A}z+y\triangleleft_{A}z)\nonumber\\
&&=(x\triangleright_{A}y+x\triangleleft_{A} y)\triangleright_{A}z+(x\triangleright_{A}y+x\triangleleft_{A} y)\triangleleft_{A}z\nonumber\\
&&=(y\triangleright_{A}x+y\triangleleft_{A} x)\triangleright_{A}z+(y\triangleright_{A}x+y\triangleleft_{A} x)\triangleleft_{A}z,\mlabel{eq:dpp1.2}\\
&&x\triangleright_{A}(y\triangleleft_{A}z)+2(y\triangleleft_{A}z)\triangleleft_{A}x=0,\mlabel{eq:dpp1.1}\\
&&x\triangleleft_{A}(y\triangleright_{A}z+y\triangleleft_{A}z)=-(x\triangleleft_{A}z)\triangleleft_{A}y=(y\triangleright_{A}x+x\triangleleft_{A}y)\triangleleft_{A}z,\mlabel{eq:dpp1.3}\\
&&(x\triangleright_{A}y+x\triangleleft_{A}y)\triangleright_{A}z=y\triangleright_{A}(x\triangleright_{A}z+z\triangleleft_{A}x)=x\triangleright_{A}(y\triangleright_{A}z)+(y\triangleright_{A}z)\triangleleft_{A}x.
\mlabel{eq:dpp1.4}
\end{eqnarray}
In this case, $(A,\circ_{A})$ with
$\circ_A$ defined by \meqref{eq:dpp2} is a perm algebra, called
the {\bf associated perm algebra} of
$(A,\triangleright_{A},\triangleleft_{A})$. Correspondingly,
$(A,\triangleright_{A},\triangleleft_{A})$ is called a {\bf
compatible \dpp} structure on $(A,\circ_{A})$.
\end{pro}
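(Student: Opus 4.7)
The plan is to reduce this proposition to Proposition~\ref{pro:1548}(\mref{i:spa3}) combined with Lemma~\ref{lem:312}, and then simply expand the resulting operator identities on generic elements to read off the four displayed equations.

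First I would invoke Proposition~\ref{pro:1548}(\mref{i:spa3}) to replace the definition of an \dpp by the pair of conditions: (a) $(A,\circ_{A})$ with $\circ_A=\triangleright_A+\triangleleft_A$ is a perm algebra, and (b) the triple $(\mathcal{L}_{\bullet_A},\,\mathcal{L}_{\bullet_A}+\mathcal{R}_{\triangleleft_A},\,A)$ is a representation of $(A,\circ_A)$. Condition (a), namely $x\circ_A(y\circ_A z)=(x\circ_A y)\circ_A z=(y\circ_A x)\circ_A z$, is exactly \eqref{eq:dpp1.2} after substituting $\circ_A=\triangleright_A+\triangleleft_A$, so that equation comes for free.

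Next I would handle (b) via Lemma~\ref{lem:312} with $l=\mathcal{L}_{\bullet_A}$ and $r=\mathcal{L}_{\bullet_A}+\mathcal{R}_{\triangleleft_A}$. As noted in the remark preceding the proposition, this choice gives $\widetilde{l}=2l-r=\mathcal{L}_{\bullet_A}-\mathcal{R}_{\triangleleft_A}=\mathcal{L}_{\triangleright_A}$ (using $x\bullet_A y=x\triangleright_A y+y\triangleleft_A x$) and $\widetilde{r}=r-l=\mathcal{R}_{\triangleleft_A}$. Thus (b) holds if and only if \eqref{equiv rep1}--\eqref{equiv rep3} hold with $\widetilde{l}=\mathcal{L}_{\triangleright_A}$ and $\widetilde{r}=\mathcal{R}_{\triangleleft_A}$.

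It then remains to translate each of \eqref{equiv rep1}, \eqref{equiv rep2}, \eqref{equiv rep3} into an identity in the operations $\triangleright_A,\triangleleft_A$ by evaluating on arbitrary $v\in A$ and relabeling variables. Equation \eqref{equiv rep1} becomes $y\triangleright_A(v\triangleleft_A x)+2(v\triangleleft_A x)\triangleleft_A y=0$, which after relabeling is \eqref{eq:dpp1.1}. Equation \eqref{equiv rep2} unfolds to $v\triangleleft_A(x\triangleright_A y+x\triangleleft_A y)=-(v\triangleleft_A y)\triangleleft_A x=(x\triangleright_A v+v\triangleleft_A x)\triangleleft_A y$, i.e.\ \eqref{eq:dpp1.3}; and \eqref{equiv rep3} unfolds to $(x\triangleright_A y+x\triangleleft_A y)\triangleright_A v=x\triangleright_A(y\triangleright_A v)+(y\triangleright_A v)\triangleleft_A x=y\triangleright_A(x\triangleright_A v+v\triangleleft_A x)$, i.e.\ \eqref{eq:dpp1.4}. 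The last sentence of the statement, asserting that $(A,\circ_A)$ is a perm algebra, is recorded along the way and the terminology of ``associated'' and ``compatible'' is purely definitional.

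The main obstacle is purely notational: keeping careful track of the transpositions $\widetilde{l}=2l-r$, $\widetilde{r}=r-l$ and making sure the relabeling of $(x,y,v)$ in \eqref{equiv rep1}--\eqref{equiv rep3} produces exactly the form in \eqref{eq:dpp1.1}, \eqref{eq:dpp1.3}, \eqref{eq:dpp1.4}. There is no substantive algebraic difficulty once Proposition~\ref{pro:1548} and Lemma~\ref{lem:312} are in hand.
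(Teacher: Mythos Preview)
Your proposal is correct and follows essentially the same approach as the paper's own proof: reduce to Proposition~\mref{pro:1548}(\mref{i:spa3}), apply Lemma~\mref{lem:312} with $l=\mathcal{L}_{\bullet_A}$ and $r=\mathcal{L}_{\bullet_A}+\mathcal{R}_{\triangleleft_A}$ so that $\widetilde{l}=\mathcal{L}_{\triangleright_A}$ and $\widetilde{r}=\mathcal{R}_{\triangleleft_A}$, and then read off \meqref{eq:dpp1.1}--\meqref{eq:dpp1.4} from \meqref{equiv rep1}--\meqref{equiv rep3} together with the perm identity for $\circ_A$. The paper does exactly this, with the converse direction dismissed as a ``similar argument.''
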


\begin{proof}
Suppose that $(A,\triangleright_{A},\triangleleft_{A})$ is an
\dpp. Define a multiplication $\bullet_{A}:A\otimes
A\rightarrow A$ by \meqref{eq:sum}. By Proposition
\mref{pro:1548} and Lemma \mref{lem:312}, the triple
$(\widetilde{\mathcal{L}_{\bullet_{A}}},\widetilde{\mathcal{L}_{\bullet_{A}}+\mathcal{R}_{\triangleleft_{A}}},A)=(
\mathcal{L}_{\triangleright_{A}},\mathcal{R}_{\triangleleft_{A}},A)$
satisfies \meqref{equiv rep1}--\eqref{equiv rep3}, that is,
\meqref{eq:dpp1.1}--\eqref{eq:dpp1.4} hold respectively. 
Moreover, since $(A,\circ_{A})$ is a perm algebra, \meqref{eq:dpp1.2} holds.

Conversely, a similar argument shows that, if
\meqref{eq:dpp1.2}--\eqref{eq:dpp1.4} hold, then
$(A,\triangleright_{A},\triangleleft_{A})$ is an \dpp.
\end{proof}

\begin{defi}
    Let $(l,r,V)$ be a representation of a perm algebra $(A,\circ_{A})$.
   If a linear map $T:V^{*}\rightarrow A$ satisfies the
   equation
        \begin{equation}
            T(u^{*})\circ_{A}T(v^{*})=T\Big( (l^{*}+r^{*})\big(T(u^{*})\big)v^{*}-r^{*}\big(T(v^{*})\big)u^{*}
            \Big),\;\forall u^{*},v^{*}\in V^{*},
        \end{equation}
        then we call $T$ a {\bf \ddop} of $(A,\circ_{A})$ associated to $(l,r,V)$.
       In particular, a \ddop is called {\bf strong} if there exists a perm algebra structure on $V^{*}$ given by
        \begin{equation}\mlabel{eq:circ}
            u^{*}\circ_{V^{*}}v^{*}=(l^{*}+r^{*})\big(T(u^{*})\big)v^{*}-r^{*}\big(T(v^{*})\big)u^{*},\;\;\forall
            u^*, v^*\in V^*.
        \end{equation}
\end{defi}

The notion of $\mathcal O$-operators was
first introduced on Lie algebras as a generalization of the
classical Yang-Baxter equation \mcite{Ku} and then defined for
other algebra structures.  They are also called relative
Rota-Baxter operators or generalized Rota-Baxter operators. There
are other generalizations such as anti-$\mathcal O$-operators
\mcite{GLB,LB2022}. On the other hand, for the above notion, there
is an equivalent characterization in terms of linear maps from the
representation spaces themselves to the underlying vector spaces
of the algebras. Explicitly, for a representation $(l,r,V)$ of a
perm algebra $(A,\circ_{A})$, a linear map $T:V\rightarrow A$ is
called an {\bf \dop} of $(A,\circ_{A})$ associated to $( l,r,V)$
if
        \begin{eqnarray}
            (Tu)\circ_{A}(Tv)=T\big((2l-r)(Tu)v+(r-l)(Tv)u\big)
            =T\big(\widetilde{l}(Tu)v+\widetilde{r}(Tv)u\big),\;\forall u,v\in V.\mlabel{eq:10}
        \end{eqnarray}
In particular, an \dop $T$ is called {\bf strong} if
        $(V,\circ_{V})$ given by
        \begin{equation}
            u\circ_{V}v=(2l-r)(Tu)v+(r-l)(Tv)u
            =\widetilde{l}(Tu)v+\widetilde{r}(Tv)u,\;\forall u,v\in V
        \end{equation}
        is a perm algebra. Note that an \dop associated to a
        representation $( l,r,V)$ is exactly an $\mathcal
        O$-operator associated to the triple $(\widetilde l,\widetilde r,V)$ in the usual sense.
        Moreover, for a linear map $T:V^{*}\rightarrow
        A$, $T$ is a \ddop of $(A,\circ_{A})$ associated to $(l,r,V)$ if and only if $T$ is an \dop of $(A,\circ_{A})$ associated to
        $(l^{*},l^{*}-r^{*},V^{*})$ and in addition, $T$ is
        strong as a \ddop  if and only if $T$ is
strong as an \dop. Hence in this sense, both the notions of an
\dop and a \ddop are variations of the notion of $\mathcal
O$-operators.

\begin{pro}\mlabel{pro:ddop}
      Let $(l,r,V)$ be a representation of a perm algebra $(A,\circ_{A})$.
Suppose that $T:V^{*}\rightarrow A$ is a \ddop of $(A,\circ_{A})$
associated to $(l,r,V)$.
    Define the multiplications $\triangleright_{V^{*}},\triangleleft_{V^{*}}:V^{*}\otimes V^{*}\rightarrow V^{*}$ by
    \begin{equation}\mlabel{eq:dual mul}
        u^{*}\triangleright_{V^{*}}v^{*}=(l^{*}+r^{*})(Tu^{*})v^{*},\;
        u^{*}\triangleleft_{V^{*}}v^{*}=-r^{*}(Tv^{*})u^{*},\;\forall u^{*},v^{*}\in V^{*}.
    \end{equation}
    Then $(V^{*},\triangleright_{V^{*}},\triangleleft_{V^{*}})$ is an \dpp if and only if $T$ is strong.
\end{pro}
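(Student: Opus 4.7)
The plan is to apply Proposition \mref{pro:SPA} to the triple $(V^{*},\triangleright_{V^{*}},\triangleleft_{V^{*}})$, which reduces the apre-perm condition to two requirements: first, that $(V^{*},\circ_{V^{*}})$ with $\circ_{V^{*}}=\triangleright_{V^{*}}+\triangleleft_{V^{*}}$ is a perm algebra; and second, that $(\mathcal{L}^{*}_{\bullet_{V^{*}}},-\mathcal{R}^{*}_{\triangleleft_{V^{*}}},V^{**})$ is a representation of $(V^{*},\circ_{V^{*}})$, where $\bullet_{V^{*}}$ is the auxiliary multiplication from \meqref{eq:sum}. The first requirement is literally the definition of $T$ being strong, which immediately gives the easy direction: if $(V^{*},\triangleright_{V^{*}},\triangleleft_{V^{*}})$ is an apre-perm algebra, then $(V^{*},\circ_{V^{*}})$ is a perm algebra, so $T$ is strong.

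For the converse, I assume $T$ is strong and verify the representation condition. The first step is to compute $\bullet_{V^{*}}$ directly from \meqref{eq:dual mul}:
\begin{equation*}
u^{*}\bullet_{V^{*}}v^{*}=u^{*}\triangleright_{V^{*}}v^{*}+v^{*}\triangleleft_{V^{*}}u^{*}=(l^{*}+r^{*})(Tu^{*})v^{*}-r^{*}(Tu^{*})v^{*}=l^{*}(Tu^{*})v^{*}.
\end{equation*}
From this one reads off that, under the canonical identification $V^{**}\cong V$ in finite dimension, the candidate representation is given on $V$ by
\begin{equation*}
\mathcal{L}^{*}_{\bullet_{V^{*}}}(u^{*})=l(Tu^{*}),\qquad -\mathcal{R}^{*}_{\triangleleft_{V^{*}}}(u^{*})=r(Tu^{*}).
\end{equation*}
Hence the task reduces to showing that $(l\circ T,r\circ T,V)$ is a representation of $(V^{*},\circ_{V^{*}})$.

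This final step follows by transporting the representation axioms \meqref{eq:rep1}--\meqref{eq:rep2} of $(l,r,V)$ on $(A,\circ_{A})$ across $T$. The crucial identity is $T(u^{*}\circ_{V^{*}}v^{*})=Tu^{*}\circ_{A}Tv^{*}$, which is precisely the defining condition of $T$ as a \ddop and holds regardless of strongness. Combined with the representation property of $(l,r,V)$ applied at $x=Tu^{*}$, $y=Tv^{*}$, it yields, for example, $l(T(u^{*}\circ_{V^{*}}v^{*}))w=l(Tu^{*}\circ_{A}Tv^{*})w=l(Tu^{*})l(Tv^{*})w=l(Tv^{*})l(Tu^{*})w$, and analogously the right-action equations. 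The only real obstacle is the bookkeeping of dualizations, namely correctly identifying $\mathcal{L}^{*}_{\bullet_{V^{*}}}$ and $-\mathcal{R}^{*}_{\triangleleft_{V^{*}}}$ on $V^{**}\cong V$ with $l\circ T$ and $r\circ T$ respectively; once this identification is in hand, the verification is an immediate transport of the known representation identities from $(A,\circ_{A})$ to $(V^{*},\circ_{V^{*}})$ via the intertwining property of $T$.
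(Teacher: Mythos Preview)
Your proof is correct and follows essentially the same route as the paper's. The paper verifies Definition~\mref{defi:generic pre-perm algebra2} directly by computing each side of \meqref{eq:gppa2,2} and \meqref{eq:gppa3,2} in $V^{*}$ using the dualized identities $l^{*}(x\circ_{A}y)=l^{*}(y)l^{*}(x)$, etc.; you instead invoke the equivalent characterization in Proposition~\mref{pro:SPA}~\meqref{i:spa2} and dualize once more to $V^{**}\cong V$, observing that the candidate representation there is simply $(l\circ T,\,r\circ T,\,V)$, after which the check is immediate from the original axioms \meqref{eq:rep1}--\meqref{eq:rep2} and the intertwining relation $T(u^{*}\circ_{V^{*}}v^{*})=Tu^{*}\circ_{A}Tv^{*}$. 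Your packaging is slightly cleaner, but the underlying computation is identical.
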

\begin{proof}
Suppose that
$(V^{*},\triangleright_{V^{*}},\triangleleft_{V^{*}})$ is an \dpp.
Then the associated perm algebra is exactly $(V^*,\circ_{V^*})$
with $\circ_{V^*}$ defined by \meqref{eq:circ}. Hence $T$ is
strong.

Conversely, suppose that $T$ is strong. Let $u^{*},v^{*},w^{*}\in
V^{*}$. Rewrite \meqref{eq:rep1} and \meqref{eq:rep2} as
\begin{eqnarray}
&&l^{*}(x\circ_{A}y)u^{*}=l^{*}(y)l^{*}(x)u^{*}=l^{*}(x)l^{*}(y)u^{*},\mlabel{eq:rep dual1}\\
&&
r^{*}(x\circ_{A}y)u^{*}=r^{*}(x)r^{*}(y)u^{*}=l^{*}(x)r^{*}(y)u^{*}=r^{*}(y)l^{*}(x)
u^{*},\mlabel{eq:rep dual2}
\end{eqnarray}
for all $x,y\in A$. Moreover, define
multiplications $\circ_{V^*},\bullet_{V^*}$ by \meqref{eq:dpp2}
and \meqref{eq:sum} respectively. Then we have
\begin{equation}
        u^{*}\circ_{V^{*}}v^{*}=(l^{*}+r^{*}) (T u^{*}) v^{*}-r^{*} (Tv^{*}) u^{*},\; u^{*}\bullet_{V^{*}}v^{*}=l^{*} (Tu^{*}) v^{*}.
\end{equation}
Then $(V^*,\circ_{V^*})$ is a perm algebra, and we have
\begin{eqnarray*}
&&(u^{*}\circ_{V^{*}}v^{*})\bullet_{V^{*}}w^{*}=l^{*}T\big((l^{*}+r^{*})(Tu^{*})v^{*}-r^{*}(Tv^{*})u^{*}\big)w^{*}=l^{*}(Tu^{*}\circ_{A}Tv^{*})w^{*},\\
&& u^{*}\bullet_{V^{*}}(v^{*}\bullet_{V^{*}} w^{*})=l^{*}(Tu^{*})l^{*}(Tv^{*})w^{*},\\
&& v^{*}\bullet_{V^{*}}(u^{*}\bullet_{V^{*}} w^{*})=l^{*}(Tv^{*})l^{*}(Tu^{*})w^{*},\\
&&w^{*}\triangleleft_{V^{*}}(u^{*}\circ_{V^{*}}v^{*})=-r^{*}T\big((l^{*}+r^{*})(Tu^{*})v^{*}-r^{*}(Tv^{*})u^{*}\big)w^{*}=-r^{*}(Tu^{*}\circ_{A}Tv^{*})w^{*},\\
&&-(w^{*}\triangleleft_{V^{*}}v^{*})\triangleleft_{V^{*}}u^{*}=-r^{*}(Tu^{*})r^{*}(Tv^{*})w^{*},\\
&&u^{*}\bullet_{V^{*}}(w^{*}\triangleleft_{V^{*}}v^{*})=-l^{*}(Tu^{*})r^{*}(Tv^{*})w^{*},\\
&&(u^{*}\bullet_{V^{*}} w^{*})\triangleleft_{V^{*}}v^{*}=-r^{*}(Tv^{*})l^{*}(Tu^{*})w^{*}.
\end{eqnarray*}
By \meqref{eq:rep dual1} and \meqref{eq:rep dual2}, we have
\begin{eqnarray*}
     &&(u^{*}\circ_{V^{*}}v^{*})\bullet_{V^{*}}w^{*}= u^{*}\bullet_{V^{*}}(v^{*}\bullet_{V^{*}} w^{*})= v^{*}\bullet_{V^{*}}(u^{*}\bullet_{V^{*}} w^{*}),\\
    &&w^{*}\triangleleft_{V^{*}}(u^{*}\circ_{V^{*}}v^{*})= -(w^{*}\triangleleft_{V^{*}}v^{*})\triangleleft_{V^{*}}u^{*}= u^{*}\bullet_{V^{*}}(w^{*}\triangleleft_{V^{*}}v^{*})= (u^{*}\bullet_{V^{*}} w^{*})\triangleleft_{V^{*}}v^{*}.
\end{eqnarray*}
Hence $(V^{*},\triangleright_{V^{*}},\triangleleft_{V^{*}})$ is an
\dpp.
\end{proof}

\begin{pro}\mlabel{pro:dual strong}
    An invertible \ddop of a perm algebra is automatically strong.
\end{pro}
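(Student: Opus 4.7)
The plan is to exploit the invertibility of $T$ to transport the perm algebra structure on $A$ back to $V^*$ via $T^{-1}$, and identify the transported structure with $\circ_{V^{*}}$ defined in \meqref{eq:circ}. First I would unfold the definition of a \ddop: the defining identity says precisely that
\[
T(u^{*})\circ_{A}T(v^{*}) = T\bigl((l^{*}+r^{*})(Tu^{*})v^{*} - r^{*}(Tv^{*})u^{*}\bigr) = T(u^{*}\circ_{V^{*}} v^{*}),
\]
where $\circ_{V^{*}}$ is given by \meqref{eq:circ}. Since $T$ is invertible, this is equivalent to $u^{*}\circ_{V^{*}} v^{*} = T^{-1}\bigl(T(u^{*})\circ_{A}T(v^{*})\bigr)$ for all $u^{*},v^{*}\in V^{*}$, so $T$ becomes a linear isomorphism intertwining $\circ_{V^{*}}$ and $\circ_{A}$.

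Next I would verify the perm identity \meqref{eq:perm} on $(V^{*},\circ_{V^{*}})$ directly. For $u^{*},v^{*},w^{*}\in V^{*}$, applying $T$ and iterating the above intertwining formula gives
\[
T\bigl(u^{*}\circ_{V^{*}}(v^{*}\circ_{V^{*}} w^{*})\bigr) = T(u^{*})\circ_{A}\bigl(T(v^{*})\circ_{A} T(w^{*})\bigr),
\]
\[
T\bigl((u^{*}\circ_{V^{*}} v^{*})\circ_{V^{*}} w^{*}\bigr) = \bigl(T(u^{*})\circ_{A}T(v^{*})\bigr)\circ_{A}T(w^{*}),
\]
\[
T\bigl((v^{*}\circ_{V^{*}} u^{*})\circ_{V^{*}} w^{*}\bigr) = \bigl(T(v^{*})\circ_{A}T(u^{*})\bigr)\circ_{A}T(w^{*}).
\]
The perm axiom on $(A,\circ_{A})$ equates these three right-hand sides. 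Since $T$ is injective, the three left-hand arguments must coincide in $V^{*}$, which is exactly the perm axiom for $\circ_{V^{*}}$. Hence $(V^{*},\circ_{V^{*}})$ is a perm algebra and $T$ is strong by definition.

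There is no real obstacle here: the entire argument rests on the observation that the \ddop equation is precisely the statement that $T$ is an algebra homomorphism from $(V^{*},\circ_{V^{*}})$ (as a candidate structure) to $(A,\circ_{A})$, and invertibility allows us to pull back the perm identity. No appeal to Proposition \mref{pro:ddop} is needed, although one may equivalently reach the same conclusion by verifying that $(V^{*},\triangleright_{V^{*}},\triangleleft_{V^{*}})$ defined by \meqref{eq:dual mul} is an \dpp (the apre-perm identities \meqref{eq:dpp1.2}--\eqref{eq:dpp1.4} for the transported structure follow from \eqref{eq:rep1}--\eqref{eq:rep2} in the same pull-back manner) and then invoking Proposition \mref{pro:ddop}.
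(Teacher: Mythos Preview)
Your proposal is correct and follows essentially the same argument as the paper: both observe that the \ddop identity rewrites as $u^{*}\circ_{V^{*}}v^{*}=T^{-1}\bigl(T(u^{*})\circ_{A}T(v^{*})\bigr)$, so that $T$ is a linear isomorphism intertwining $\circ_{V^{*}}$ with $\circ_{A}$, and then pull back the perm identity from $A$ to $V^{*}$. The only cosmetic difference is that you apply $T$ and use injectivity whereas the paper applies $T^{-1}$ directly; your closing parenthetical about the alternative route via Proposition~\mref{pro:ddop} is unnecessary (and slightly circular, since the apre-perm axioms already encode that $\circ_{V^{*}}$ is perm), but the main argument stands on its own.
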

\begin{proof}
    Let $T$ be an invertible \ddop of a perm algebra $(A,\circ_{A})$ associated to $( l,r,V)$.  Then with the notations in the proof of Proposition~\mref{pro:ddop},
  we have
    \begin{eqnarray*}
        T^{-1}(Tu^{*}\circ_{A}Tv^{*})=(l^{*}+r^{*}) (T u^{*}) v^{*}-r^{*} (T v^{*}
        )u^{*}=u^{*}\circ_{V^{*}}v^{*},\;\;  \forall u^{*},v^{*}\in
        V^{*}.
    \end{eqnarray*}
Therefore for all $u^{*},v^{*},w^{*}\in V^{*}$, we have
    \begin{eqnarray*}
        &&u^{*}\circ_{V^{*}}(v^{*}\circ_{V^{*}}w^{*})=T^{-1}\big(Tu^{*}\circ_{A}T(v^{*}\circ_{V^{*}}w^{*})\big)=T^{-1}\big(Tu^{*}\circ_{A}(Tv^{*}\circ_{A}Tw^{*})\big),\\
        &&(u^{*}\circ_{V^{*}}v^{*})\circ_{V^{*}}w^{*}=T^{-1}\big(T(u^{*}\circ_{V^{*}}v^{*})\circ_{A}Tw^{*}\big)=T^{-1}\big((Tu^{*}\circ_{A}Tv^{*})\circ_{A}Tw^{*}\big),\\
        &&(v^{*}\circ_{V^{*}}u^{*})\circ_{V^{*}}w^{*}=T^{-1}\big(T(v^{*}\circ_{V^{*}}u^{*})\circ_{A}Tw^{*}\big)=T^{-1}\big((Tv^{*}\circ_{A}Tu^{*})\circ_{A}Tw^{*}\big).
    \end{eqnarray*}
    Thus $(V^{*},\circ_{V^{*}})$ is a perm algebra, and hence $T$ is strong.
\end{proof}

\begin{thm}\mlabel{thm:dual 2}
     There is a compatible \dpp structure $(A,\triangleright_{A},
    \triangleleft_{A})$ on a perm algebra $(A,\circ_{A})$
 if and only if there is an invertible \ddop $T$ of $(A,\circ_{A})$ associated to  $( l,r,V)$. In this case, the multiplications $\triangleright_{A},\triangleleft_{A}$ are respectively defined by
    \begin{equation}\mlabel{deq:pro:2.14}
        x\triangleright_{A}y=T ( l^{*}+r^{*}) (x)T^{-1}(y) ,\;
        x\triangleleft_{A}y=-T r^{*}(y)T^{-1}(x),\;\forall x,y\in A.
    \end{equation}
\end{thm}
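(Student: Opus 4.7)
The plan is to reduce both directions to a transport of structure along $T$, exploiting the freedom to choose the representation data $(l,r,V)$ on one side.

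For the ``only if'' direction, the natural choice is to take $V=A^{*}$ with the representation $l=\mathcal{L}^{*}_{\triangleright_{A}}+\mathcal{R}^{*}_{\triangleleft_{A}}$, $r=-\mathcal{R}^{*}_{\triangleleft_{A}}$ (which is a representation of $(A,\circ_{A})$ by Proposition~\ref{pro:1548}), and to let $T:V^{*}=A^{**}\to A$ be the canonical identification $\mathrm{id}_{A}$, which is obviously invertible. I would first compute the transposes under $A^{**}\cong A$, which give $l^{*}(x)=\mathcal{L}_{\triangleright_{A}}(x)+\mathcal{R}_{\triangleleft_{A}}(x)$ and $r^{*}(x)=-\mathcal{R}_{\triangleleft_{A}}(x)$. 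Substituting into the defining identity of a \ddop, the right-hand side collapses to $x\triangleright_{A}y+x\triangleleft_{A}y$, which equals $x\circ_{A}y$ by \eqref{eq:dpp2}. Plugging $T=\mathrm{id}$ into the formulas~\eqref{deq:pro:2.14} then recovers $\triangleright_{A},\triangleleft_{A}$ on the nose.

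For the ``if'' direction, suppose $T:V^{*}\to A$ is an invertible \ddop associated to a representation $(l,r,V)$. I would first invoke Proposition~\ref{pro:dual strong} to conclude that $T$ is automatically strong, then apply Proposition~\ref{pro:ddop} to obtain an \dpp $(V^{*},\triangleright_{V^{*}},\triangleleft_{V^{*}})$ with multiplications as in~\eqref{eq:dual mul}. Next I would transport this structure along the isomorphism $T$ by setting $x\triangleright_{A}y:=T(T^{-1}(x)\triangleright_{V^{*}}T^{-1}(y))$ and analogously for $\triangleleft_{A}$; substituting \eqref{eq:dual mul} then produces exactly the formulas in~\eqref{deq:pro:2.14}. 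Since the apre-perm axioms~\eqref{eq:dpp1.2}--\eqref{eq:dpp1.4} are preserved under linear isomorphism, $(A,\triangleright_{A},\triangleleft_{A})$ is an \dpp. To confirm that its associated perm algebra matches $(A,\circ_{A})$, I would sum the two formulas and apply the strong \ddop identity $T(u^{*}\circ_{V^{*}}v^{*})=Tu^{*}\circ_{A}Tv^{*}$ to obtain $x\triangleright_{A}y+x\triangleleft_{A}y=x\circ_{A}y$.

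I do not expect a genuine obstacle: the argument is essentially bookkeeping with dual pairings once Propositions~\ref{pro:1548}, \ref{pro:ddop} and \ref{pro:dual strong} are in hand. The one delicate point is keeping the canonical identifications $A^{**}\cong A$ and $V^{**}\cong V$ straight when computing $l^{*}$ and $r^{*}$ in the forward direction; once those transposes are pinned down, the \ddop equation matches \eqref{eq:dpp2} term by term, and the rest is functorial transport.
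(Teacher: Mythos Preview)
Your proposal is correct and follows essentially the same route as the paper: in the forward direction you use the representation $(\mathcal{L}^{*}_{\bullet_{A}},-\mathcal{R}^{*}_{\triangleleft_{A}},A^{*})$ and $T=\mathrm{id}$, and in the backward direction you transport the \dpp structure on $V^{*}$ from Proposition~\ref{pro:ddop} along $T$. The only cosmetic difference is that you explicitly invoke Proposition~\ref{pro:dual strong} to pass from invertible to strong, whereas the paper leaves this step implicit.
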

\begin{proof}
    Suppose that $T:V^{*}\rightarrow A$ is an invertible \ddop of $(A,\circ_{A})$ associated to $( l,r,V)$. Then  there is an induced \dpp structure $(V^{*},\triangleright_{V^{*}},\triangleleft_{V^{*}})$ on $V^{*}$ given by \meqref{eq:dual mul}.
    The linear isomorphism $T$ gives an \dpp structure $(A,\triangleright_{T},\triangleleft_{T})$ on $A$ by
    \begin{eqnarray*}
        &&x\triangleright_{T}y:=T\big(T^{-1}(x)\triangleright_{V^*}T^{-1}(y)\big)\overset{\meqref{eq:dual mul}}{=}T ( l^{*}+r^{*}) (x)T^{-1}(y):=x\triangleright_{A}y,\\
        &&x\triangleleft_{T}y:=T\big(T^{-1}(x)\triangleleft_{V^*}T^{-1}(y)\big)\overset{\meqref{eq:dual mul}}{=}-T
        r^{*}(y)T^{-1}(x):=x\triangleleft_{A}y,\;\;\forall x,y\in A.
    \end{eqnarray*}
Moreover, we obviously have $x\triangleright_A y+x\triangleleft_Ay=x\circ_A y$ for
all $x,y\in A$.

Conversely, suppose that
$(A,\triangleright_{A},\triangleleft_{A})$ is an \dpp. Define
multiplications $\circ_{A},\bullet_{A}:A\otimes A\rightarrow A$ by
\meqref{eq:dpp2} and \meqref{eq:sum} respectively. Then $(
\mathcal{L}^{*}_{\bullet_{A}},-\mathcal{R}^{*}_{\triangleleft_{A}},A^{*})$
is a representation of the associated perm algebra
$(A,\circ_{A})$. For all $x,y\in A$, we have
\begin{eqnarray*}
    \mathrm{id}(x)\circ_{A}\mathrm{id}(y)
    =x\triangleright_{A}y+x\triangleleft_{A}y
    =\mathrm{id}\Big( ( \mathcal{L}^{*}_{\bullet_{A}} -\mathcal{R}^{*}_{\triangleleft_{A}})^{*}\big(\mathrm{id}(x)\big)y-(-\mathcal{R}^{*}_{\triangleleft_{A}})^{*}\big(\mathrm{id}(y)\big)x \Big).
\end{eqnarray*}
Thus $T=\mathrm{id}\in\mathrm{Hom}_{\mathbb K}\big(
(A^{*})^{*},A \big)=\mathrm{End}_{\mathbb K}(A)$ is an invertible
\ddop of $(A,\circ_{A})$ associated to $(
\mathcal{L}^{*}_{\bullet_{A}},-\mathcal{R}^{*}_{\triangleleft_{A}},A^{*})$.
\end{proof}

Let $V$ be a vector space. Then the isomorphism ${\rm
Hom}_{\mathbb K}(V\otimes V,\mathbb K)\cong {\rm Hom}_{\mathbb
K}(V, V^*)$ identifies a bilinear form  $\mathcal{B}:V\otimes
V\rightarrow \mathbb K$ on V with a linear map $\mathcal
B^\natural:V\rightarrow V^*$ by
$$\mathcal B(u,v)=\langle \mathcal B^\natural (u),
v\rangle,\;\;\forall u,v\in V.$$ Moreover, $\mathcal B$ is
nondegenerate if and only if $\mathcal B^\natural$ is invertible.

\begin{thm}\mlabel{thm:420}
    Let $\mathcal{B}$ be a nondegenerate bilinear form on a perm algebra $(A,\circ_{A})$ satisfying
    \begin{equation}\mlabel{eq:left inv1}
        \mathcal{B}(x\circ_{A}y,z)=\mathcal{B}(y,x\circ_{A}z+z\circ_{A}x)-\mathcal{B}(x,z\circ
        _{A} y),\;\forall x,y,z\in A.
    \end{equation}
    Then there is a compatible \dpp $(A,\triangleright_{A},\triangleleft_{A})$ with  multiplications $\triangleright_{A},\triangleleft_{A}:A\otimes
    A\rightarrow A$ defined respectively by
    \begin{eqnarray}
        \mathcal{B}(x\triangleright_{A}y,z)&=&\mathcal{B}(y,x\circ_{A}z+z\circ_{A}x),\mlabel{eq:cor3}\\
        \mathcal{B}(x\triangleleft_{A}y,z)&=&-\mathcal{B}(x,z\circ_{A} y), \quad \forall x, y, z\in A.\mlabel{eq:cor4}
    \end{eqnarray}
Moreover, $(\mathcal{L}^{*}_{\bullet_{A}},-\mathcal{R}^{*}_{\triangleleft_{A}},A^{*})$ and $(\mathcal{L} _{\circ_{A}},\mathcal{R} _{\circ_{A}},A)$ are equivalent as representations of $(A,\circ_{A})$, or equivalently, $(\mathcal{L}_{\bullet_{A}},\mathcal{L}_{\bullet_{A}}+\mathcal{R}_{\triangleleft_{A}},A)$ and  $(\mathcal{L}^{*}_{\circ_{A}},\mathcal{L}^{*}_{\circ_{A}}-\mathcal{R}^{*}_{\circ_{A}},A^{*})$ are equivalent as representations
of $(A,\circ_{A})$.
\end{thm}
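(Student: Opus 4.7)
The plan is to invoke Theorem~\mref{thm:dual 2} after producing from $\mathcal{B}$ an invertible \ddop of $(A,\circ_A)$ associated to the adjoint representation. Let $\mathcal{B}^{\natural}:A\to A^*$ denote the isomorphism given by $\langle\mathcal{B}^{\natural}(x),z\rangle=\mathcal{B}(x,z)$ and set $T:=(\mathcal{B}^{\natural})^{-1}:A^*\to A$, so that $\mathcal{B}(T(u^*),z)=\langle u^*,z\rangle$ for all $u^*\in A^*,\, z\in A$. First I would verify that $T$ is a \ddop of $(A,\circ_A)$ associated to $(\mathcal{L}_{\circ_A},\mathcal{R}_{\circ_A},A)$: writing $x=T(u^*)$ and $y=T(v^*)$, applying $T^{-1}=\mathcal{B}^{\natural}$ to both sides of the defining identity and pairing with an arbitrary $z\in A$ reduces the desired equality to exactly the hypothesis~\meqref{eq:left inv1}. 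Since $T$ is invertible, Theorem~\mref{thm:dual 2} produces a compatible \dpp structure on $(A,\circ_A)$ through~\meqref{deq:pro:2.14}, and a direct substitution using $T^{-1}=\mathcal{B}^{\natural}$ shows these formulas coincide with~\meqref{eq:cor3} and~\meqref{eq:cor4}; for instance,
\begin{equation*}
\mathcal{B}(x\triangleright_A y,z)=\langle(\mathcal{L}^*_{\circ_A}+\mathcal{R}^*_{\circ_A})(x)T^{-1}(y),z\rangle=\langle T^{-1}(y),x\circ_A z+z\circ_A x\rangle=\mathcal{B}(y,x\circ_A z+z\circ_A x),
\end{equation*}
and the computation for $\triangleleft_A$ is analogous.

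For the equivalence of representations, I would introduce a \emph{different} linear isomorphism $\phi:A\to A^*$ by $\langle\phi(y),z\rangle=\mathcal{B}(z,y)$ (well-defined by nondegeneracy). The two intertwining conditions
\begin{equation*}
\phi\,\mathcal{L}_{\circ_A}(x)=\mathcal{L}^*_{\bullet_A}(x)\,\phi,\qquad \phi\,\mathcal{R}_{\circ_A}(x)=-\mathcal{R}^*_{\triangleleft_A}(x)\,\phi,\quad \forall x\in A,
\end{equation*}
reduce, on pairing with an arbitrary $z\in A$, to direct applications of~\meqref{eq:cor3} and~\meqref{eq:cor4}; the first one follows from the cancellation
\begin{equation*}
\mathcal{B}(x\triangleright_A z,y)+\mathcal{B}(z\triangleleft_A x,y)=\mathcal{B}(z,x\circ_A y+y\circ_A x)-\mathcal{B}(z,y\circ_A x)=\mathcal{B}(z,x\circ_A y),
\end{equation*}
and the second is immediate from~\meqref{eq:cor4}. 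The equivalence of the second pair of representations then follows by dualizing, using the correspondence between perm representations $(l,r,V)$ and $(l^*,l^*-r^*,V^*)$ already recorded in the proof of Proposition~\mref{pro:SPA}.

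The main subtlety I anticipate is that, because $\mathcal{B}$ is not assumed symmetric, the two natural isomorphisms $A\to A^*$ arising from it, namely $\mathcal{B}^{\natural}$ and $\phi$, are genuinely different maps that play distinct roles: $\mathcal{B}^{\natural}$ is the one witnessing $\mathcal{B}$ as a \ddop and producing the \dpp structure via Theorem~\mref{thm:dual 2}, whereas $\phi$ is the one realizing the equivalence of representations. With the two maps kept separate, all remaining verifications are mechanical consequences of~\meqref{eq:left inv1}, \meqref{eq:cor3}, and~\meqref{eq:cor4}.
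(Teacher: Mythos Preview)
Your proposal is correct and follows essentially the same route as the paper: you produce from $\mathcal{B}$ the invertible \ddop $T=(\mathcal{B}^{\natural})^{-1}$ associated to the adjoint representation, invoke Theorem~\mref{thm:dual 2}, identify the resulting operations with \meqref{eq:cor3}--\meqref{eq:cor4}, and then use the map $\phi=(\mathcal{B}^{\natural})^*$ (your ``second'' isomorphism) to witness the equivalence of representations. The only cosmetic difference is that the paper verifies both equivalences directly via $\mathcal{B}^{\natural}$ and $(\mathcal{B}^{\natural})^*$, whereas you check one and obtain the other by dualizing; your remark that $\mathcal{B}^{\natural}$ and $(\mathcal{B}^{\natural})^*$ must be kept distinct when $\mathcal{B}$ is not symmetric is exactly on point.
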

\begin{proof}
    Let $x,y,z\in A$ and $a^{*}=\mathcal{B}^{\natural}(x), b^{*}=\mathcal{B}^{\natural}(y)$. Then we have
        \begin{eqnarray*}
        &&\langle \mathcal{B}^{\natural}\big( \mathcal{B}^{\natural^{-1}}(a^{*})\circ_{A} \mathcal{B}^{\natural^{-1}}(b^{*}) \big),z\rangle=\langle \mathcal{B}^{\natural}(x\circ_{A}y),z\rangle=\mathcal{B}(x\circ_{A}y,z)\\
            &&=\mathcal{B}(y,x\circ_{A}z+z\circ_{A}x)-\mathcal{B}(x,z\circ_{A}y)=\langle \mathcal{B}^{\natural}(y),x\circ_{A}z+z\circ_{A}x\rangle-\langle \mathcal{B}^{\natural}(x),z\circ_{A}y\rangle\\
            &&=\langle (\mathcal{L}^{*}_{\circ_{A}}+\mathcal{R}^{*}_{\circ_{A}})\big(\mathcal{B}^{\natural^{-1}}(a^{*})\big)b^{*},z\rangle-\langle \mathcal{R}^{*}_{\circ_{A}}\big(\mathcal{B}^{\natural^{-1}}(b^{*})\big)a^{*},z\rangle.
        \end{eqnarray*}
    Hence we have
        \begin{align*}
        \mathcal{B}^{\natural^{-1}}(a^{*})\circ_{A}\mathcal{B}^{\natural^{-1}}(b^{*})=\mathcal{B}^{\natural^{-1}} \Big((\mathcal{L}^{*}_{\circ_{A}}+\mathcal{R}^{*}_{\circ_{A}})\big( \mathcal{B}^{\natural^{-1}}(a^{*})\big)b^{*}-\mathcal{R}^{*}_{\circ_{A}}\big( \mathcal{B}^{\natural^{-1}}(b^{*}) \big)a^{*}\Big).
        \end{align*}
    That is, $\mathcal{B}^{\natural^{-1}}:A^{*}\rightarrow A$ is an invertible \ddop of $(A,\circ_{A})$ associated to the adjoint representation
    $ (\mathcal{L} _{\circ_{A}}, \mathcal{R} _{\circ_{A}},A )$.
    Hence by Theorem \mref{thm:dual 2}, there are multiplications $\triangleright_{A},\triangleleft_{A}$ defined by
    \meqref{deq:pro:2.14}, where
        $T=\mathcal{B}^{\natural^{-1}}$ and $(l,r,V)= (\mathcal{L} _{\circ_{A}}, \mathcal{R} _{\circ_{A}},A
        )$,
    such that $(A,\triangleright_{A},\triangleleft_{A})$ is an \dpp.
    Explicitly, we have
    \begin{eqnarray*}
   &&x\triangleright_{A}y=\mathcal{B}^{\natural^{-1}}\big((\mathcal{L}^{*}_{\circ_{A}}+\mathcal{R}^{*}_{\circ_{A}}) (x)\mathcal{B}^{\natural}(y) \big),\;\; x\triangleleft_{A}y=-\mathcal{B}^{\natural^{-1}}\big(\mathcal{R}^{*}_{\circ_{A}}(y)\mathcal{B}^{\natural}(x) \big),
    \end{eqnarray*}
    and thus
    \begin{eqnarray*}
            &&\mathcal{B}(x\triangleright_{A}y,z)=\langle(\mathcal{L}^{*}_{\circ_{A}}+\mathcal{R}^{*}_{\circ_{A}}) (x)\mathcal{B}^{\natural}(y) ,z\rangle=\langle \mathcal{B}^{\natural}(y),x\circ_{A}z+z\circ_{A}x\rangle=\mathcal{B}\big(y,x\circ_{A}z+z\circ_{A}x\big),\\
            &&\mathcal{B}(x\triangleleft_{A}y,z)=-\langle \mathcal{R}^{*}_{\circ_{A}}(y)\mathcal{B}^{\natural}(x), z\rangle=-\langle \mathcal{B}^{\natural}(x),z\circ_{A}y\rangle=-\mathcal{B}\big(x,z\circ_{A}y\big).
    \end{eqnarray*}

Moreover, define a multiplication $\bullet_{A}:A\otimes A\rightarrow A$ by
 \meqref{eq:sum}. Again by
\meqref{eq:cor3} and \meqref{eq:cor4}, we have
        \begin{eqnarray}
            \mathcal{B}(x\bullet_{A}y ,z)=\mathcal{B}(y,x\circ_{A}z).\mlabel{eq:bf3}%\;\forall x,y,z\in A.
        \end{eqnarray}
        Thus we have
        \begin{eqnarray*}
        &&\langle \mathcal{B}^{\natural}\big(\mathcal{L}_{\bullet_{A}}(x)y\big),z\rangle=\mathcal{B}(x\bullet_{A}y ,z)
        =\mathcal{B}(y,x\circ_{A}z)=\langle \mathcal{B}^{\natural}(y),x\circ_{A}z\rangle=\langle \mathcal{L}^{*}_{\circ_{A}}(x)\mathcal{B}^{\natural}(y),z\rangle,\\
        &&\langle \mathcal{B}^{\natural}\big(\mathcal{R}_{\triangleleft_{A}}(x)y\big),z\rangle
        =\mathcal{B}(y\triangleleft_{A}x,z)=-\mathcal{B}(y,z\circ_{A}x)=-\langle \mathcal{B}^{\natural}(y),z\circ_{A}x\rangle=
        -\langle \mathcal{R}^{*}_{\circ_{A}}(x)\mathcal{B}^{\natural}(y),z\rangle.
        \end{eqnarray*}
        Hence
        \begin{eqnarray*}
        \mathcal{B}^{\natural}\big(\mathcal{L}_{\bullet_{A}}(x)y\big)=\mathcal{L}^{*}_{\circ_{A}}(x)\mathcal{B}^{\natural}(y),\;
        \mathcal{B}^{\natural}\big((\mathcal{L}_{\bullet_{A}}+\mathcal{R}_{\triangleleft_{A}})(x)y\big)=(\mathcal{L}^{*}_{\circ_{A}}-\mathcal{R}^{*}_{\circ_{A}})(x)\mathcal{B}^{\natural}(y).
        \end{eqnarray*}
        Therefore, the bijection $\mathcal{B}^{\natural}:A\rightarrow A^{*}$ gives the equivalence between $(\mathcal{L}_{\bullet_{A}},\mathcal{L}_{\bullet_{A}}+\mathcal{R}_{\triangleleft_{A}},A)$ and  $(\mathcal{L}^{*}_{\circ_{A}},\mathcal{L}^{*}_{\circ_{A}}-\mathcal{R}^{*}_{\circ_{A}},A^{*})$ as representations of $(A,\circ_{A})$.
        Similarly by \meqref{eq:cor4} and \meqref{eq:bf3}, we have
       \begin{equation}\mlabel{eq:bf qua}
       (\mathcal{B}^{\natural})^{*}\big(\mathcal{L}_{\circ_{A}}(x)z\big)=\mathcal{L}^{*}_{\bullet_{A}}(x)(\mathcal{B}^{\natural})^{*}(z),\;
       (\mathcal{B}^{\natural})^{*}\big(\mathcal{R}_{\circ_{A}}(y)z\big)=-\mathcal{R}^{*}_{\triangleleft_{A}}(y)(\mathcal{B}^{\natural})^{*}(z).
       \end{equation}
       Therefore, the bijection $(\mathcal{B}^{\natural})^{*}:A\rightarrow A^{*}$ gives the equivalence between
        $(\mathcal{L}^{*}_{\bullet_{A}},-\mathcal{R}^{*}_{\triangleleft_{A}},A^{*})$ and $(\mathcal{L} _{\circ_{A}},\mathcal{R} _{\circ_{A}},A)$  as representations of $(A,\circ_{A})$.
\end{proof}

\begin{rmk}
In fact, since the \dpp in Theorem \mref{thm:420} is compatible
with $(A,\circ_{A})$, it is enough to retain one of
\meqref{eq:cor3} and \meqref{eq:cor4} and the other multiplication
is given from the substraction, that is,
\begin{equation*}
x\triangleleft_{A}y=x\circ_{A} y-x\triangleright_{A}y,\;\text{or}\;
x\triangleright_{A}y=x\circ_{A} y-x\triangleleft_{A}y,\;\forall x,y\in A.
\end{equation*}
\end{rmk}

The converse of Theorem~\mref{thm:420} is also true.

\begin{cor}\mlabel{cor:inv bf}
Let $(A,\triangleright_{A},\triangleleft_{A})$ be a compatible
\dpp structure on a perm algebra $(A,\circ_{A})$. Define a
multiplication $\bullet_{A}:A\otimes A\rightarrow A$ by
\meqref{eq:sum}. If $(\mathcal{L} _{\circ_{A}},\mathcal{R}
_{\circ_{A}},A)$ and $(
\mathcal{L}^{*}_{\bullet_{A}},-\mathcal{R}^{*}_{\triangleleft_{A}},A^{*})$
are equivalent as representations of $(A,\circ_{A})$, then there
exists a nondegenerate bilinear form $\mathcal{B}$ on
$(A,\circ_{A})$ satisfying \meqref{eq:left inv1}.
\end{cor}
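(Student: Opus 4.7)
The plan is to mirror the construction in the proof of Theorem~\mref{thm:420} in reverse. By hypothesis there is a linear isomorphism $\phi : A \to A^{*}$ intertwining the two representations, that is,
\[
\phi\,\mathcal{L}_{\circ_{A}}(x) = \mathcal{L}^{*}_{\bullet_{A}}(x)\,\phi, \qquad \phi\,\mathcal{R}_{\circ_{A}}(x) = -\mathcal{R}^{*}_{\triangleleft_{A}}(x)\,\phi, \qquad \forall\, x \in A.
\]
Guided by \meqref{eq:bf qua}, I would define a bilinear form $\mathcal{B} : A \times A \to \mathbb{K}$ by $\mathcal{B}(x,y) := \langle \phi(y), x \rangle$, so that $\phi$ coincides with $(\mathcal{B}^{\natural})^{*}$ under the canonical identification $A \cong A^{**}$. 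Nondegeneracy of $\mathcal{B}$ is then immediate from the invertibility of $\phi$.

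The remaining task is to verify \meqref{eq:left inv1}, and the plan is to unfold each of the three terms as a pairing with $\phi(z)$ and then shuttle the multiplication operators across $\phi$ using the two intertwining identities. Specifically, the middle term $\mathcal{B}(y, x \circ_{A} z + z \circ_{A} x)$ becomes $\langle \phi(z),\, x \bullet_{A} y - y \triangleleft_{A} x \rangle$, which by \meqref{eq:sum} collapses to $\langle \phi(z),\, x \triangleright_{A} y \rangle$; the last term $\mathcal{B}(x, z \circ_{A} y)$ similarly becomes $-\langle \phi(z),\, x \triangleleft_{A} y \rangle$. Summing and invoking $\circ_{A} = \triangleright_{A} + \triangleleft_{A}$ then yields $\langle \phi(z),\, x \circ_{A} y \rangle = \mathcal{B}(x \circ_{A} y, z)$, which is precisely the left-hand side of \meqref{eq:left inv1}.

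I do not anticipate a serious obstacle, since the whole argument is essentially a transposition-tracking computation that runs the proof of Theorem~\mref{thm:420} in reverse. The one subtle point is the minus sign appearing in the second intertwining relation, whose role is precisely to produce the cancellation that isolates the $\triangleright_{A}$ component of $\bullet_{A}$ inside the pairing. No apre-perm axiom enters the verification directly; the equivalence of representations, together with the defining identity $x \bullet_{A} y = x \triangleright_{A} y + y \triangleleft_{A} x$ from \meqref{eq:sum}, supplies all the compatibility needed.
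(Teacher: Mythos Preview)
Your proposal is correct and follows essentially the same route as the paper's own proof: both define $\mathcal{B}(x,y)=\langle x,\phi(y)\rangle$ (equivalently $\langle \phi(y),x\rangle$) so that $\phi=(\mathcal{B}^{\natural})^{*}$, and then derive \meqref{eq:left inv1} by translating the intertwining relations into \meqref{eq:cor4} and \meqref{eq:bf3}, from which \meqref{eq:cor3} and hence \meqref{eq:left inv1} follow. The paper phrases this by citing the equations established in Theorem~\mref{thm:420}, while you unpack the same computation explicitly; the content is identical.
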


\begin{proof}
Let $\phi:A\rightarrow A^{*}$ be a bijection which gives the
equivalence between $(\mathcal{L} _{\circ_{A}},\mathcal{R}
_{\circ_{A}},A)$ and
$(\mathcal{L}^{*}_{\bullet_{A}},-\mathcal{R}^{*}_{\triangleleft_{A}},A^{*})$
as representations of $(A,\circ_{A})$. Then there is a
nondegenerate bilinear form $\mathcal{B}$ on $A$ given by
\begin{equation}\mlabel{eq:phi2}
    \mathcal{B}(x,y)=\langle x,\phi(y)\rangle,\;\forall x,y\in A,
\end{equation}
that is, $\phi=(\mathcal{B}^{\natural})^{*}$. Hence
\meqref{eq:bf qua} holds. Therefore, equivalently,
\meqref{eq:cor4} and \meqref{eq:bf3} hold. Hence \meqref{eq:cor3}
holds. Furthermore, by \meqref{eq:cor3} and \meqref{eq:cor4}, we
obtain \meqref{eq:left inv1}. Hence the conclusion follows.
\end{proof}

\subsection{Special \dpps}\mlabel{sec2.2}\

\begin{defi}
A {\bf \sdpp} %(or a {\bf S\da} in short)
 is an \dpp $(A,\triangleright_{A},\triangleleft_{A})$ in which $\triangleleft_{A}$ is commutative.
\end{defi}

Note that for a \sdpp  $(A,\triangleright_{A},\triangleleft_{A})$, the
multiplications $\circ_{A},\bullet_{A}:A\otimes A\rightarrow A$
defined respectively by \meqref{eq:dpp2} and \meqref{eq:sum}
coincide.

\begin{cor}\mlabel{cor:SDPP}
    Let $A$ be a vector space with multiplications $\triangleright_{A}
    ,\triangleleft_{A}:A\otimes A\rightarrow A$.
    Set a multiplication $\circ_{A} :A\otimes A\rightarrow A$ by
    \meqref{eq:dpp2}.
    Then the following statements are equivalent:
    \begin{enumerate}
        \item\mlabel{s1} $(A,\triangleright_{A},\triangleleft_{A})$ is a \sdpp.
        \item\mlabel{s4} The multiplication $\triangleleft_{A}$ is commutative,  $(A,\circ_{A})$ is a perm algebra and the following equation holds:
        \begin{equation}\mlabel{eq:SDPP}
            (x\circ_{A}y)\triangleleft_{A}z=x\circ_{A}(y\triangleleft_{A}z)=-x\triangleleft_{A}(y\triangleleft_{A}z),\;\forall x,y,z\in A.
        \end{equation}
        \item \mlabel{s5} The multiplication $\triangleleft_{A}$ is
        commutative, and equations \meqref{eq:dpp1.1}, \meqref{eq:dpp1.4} and the following one hold.
         \begin{equation}\mlabel{eq:dpp1.5}
         x\triangleleft_{A}(y\triangleright_{A}z+y\triangleleft_{A}z)=-(x\triangleleft_{A}z)\triangleleft_{A}y.
         \end{equation}
        \item\mlabel{s3} The multiplication $\triangleleft_{A}$ is commutative, and $(A,\circ_{A})$ is a perm algebra with a representation $(\mathcal{L}^{*}_{\circ_{A}},-\mathcal{L}^{*}_{\triangleleft_{A}},A^{*})$.
        \item\mlabel{s2} The multiplication $\triangleleft_{A}$ is commutative, and $(A,\circ_{A})$ is a perm algebra with a representation  $(\mathcal{L}_{\circ_{A}},\mathcal{L}_{\circ_{A}}+\mathcal{L}_{\triangleleft_{A}},A)$.
    \end{enumerate}
\end{cor}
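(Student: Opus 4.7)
The plan is to derive the five-way equivalence from Propositions \mref{pro:SPA} and \mref{pdef:aa} by exploiting a single key observation: when $\triangleleft_{A}$ is commutative, the multiplications $\circ_{A}$ and $\bullet_{A}$ defined by \meqref{eq:dpp2} and \meqref{eq:sum} coincide, and consequently $\mathcal{R}_{\triangleleft_{A}}=\mathcal{L}_{\triangleleft_{A}}$, hence $\mathcal{R}^{*}_{\triangleleft_{A}}=\mathcal{L}^{*}_{\triangleleft_{A}}$. This single collapse is what turns each piece of the general \dpp machinery into its special-case counterpart.

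First I would dispatch (\mref{s1})$\Leftrightarrow$(\mref{s3}) and (\mref{s1})$\Leftrightarrow$(\mref{s2}) as immediate specializations of Proposition \mref{pro:SPA}. The representation $(\mathcal{L}^{*}_{\bullet_{A}},-\mathcal{R}^{*}_{\triangleleft_{A}},A^{*})$ from Proposition \mref{pro:SPA}(ii) becomes $(\mathcal{L}^{*}_{\circ_{A}},-\mathcal{L}^{*}_{\triangleleft_{A}},A^{*})$ after substitution, and similarly $(\mathcal{L}_{\bullet_{A}},\mathcal{L}_{\bullet_{A}}+\mathcal{R}_{\triangleleft_{A}},A)$ from Proposition \mref{pro:SPA}(iii) becomes $(\mathcal{L}_{\circ_{A}},\mathcal{L}_{\circ_{A}}+\mathcal{L}_{\triangleleft_{A}},A)$.

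For (\mref{s1})$\Leftrightarrow$(\mref{s4}), I would unpack Definition \mref{defi:generic pre-perm algebra2}. With $\bullet_{A}=\circ_{A}$, the chain \meqref{eq:gppa2,2} is exactly the perm algebra axiom on $(A,\circ_{A})$. In \meqref{eq:gppa3,2}, commutativity of $\triangleleft_{A}$ identifies $z\triangleleft_{A}(x\circ_{A}y)$ with $(x\circ_{A}y)\triangleleft_{A}z$, identifies $-(z\triangleleft_{A}y)\triangleleft_{A}x$ with $-x\triangleleft_{A}(y\triangleleft_{A}z)$, and turns $x\bullet_{A}(z\triangleleft_{A}y)$ into $x\circ_{A}(y\triangleleft_{A}z)$, which together yield the three-term chain \meqref{eq:SDPP}; the remaining term $(x\bullet_{A}z)\triangleleft_{A}y=(x\circ_{A}z)\triangleleft_{A}y$ matches $(x\circ_{A}y)\triangleleft_{A}z$ via the swap $y\leftrightarrow z$ in \meqref{eq:SDPP}, so no information is lost.

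For (\mref{s1})$\Leftrightarrow$(\mref{s5}), I would use Proposition \mref{pdef:aa}. The equations \meqref{eq:dpp1.1} and \meqref{eq:dpp1.4} carry over verbatim. Equation \meqref{eq:dpp1.3} consists of two separate equalities; under commutativity of $\triangleleft_{A}$ the second follows from the first, which is \meqref{eq:dpp1.5}, by the swap $x\leftrightarrow z$, so only \meqref{eq:dpp1.5} needs to be retained. The perm axiom \meqref{eq:dpp1.2} must then be recovered from \meqref{eq:dpp1.1}, \meqref{eq:dpp1.4}, \meqref{eq:dpp1.5} and commutativity; I would first combine \meqref{eq:dpp1.1} with \meqref{eq:dpp1.5} to obtain the full chain \meqref{eq:SDPP}, and then split $\circ_{A}=\triangleright_{A}+\triangleleft_{A}$ inside $(x\circ_{A}y)\circ_{A}z$ and apply \meqref{eq:dpp1.4} and \meqref{eq:SDPP} termwise to verify both $(x\circ_{A}y)\circ_{A}z=x\circ_{A}(y\circ_{A}z)$ and $(x\circ_{A}y)\circ_{A}z=(y\circ_{A}x)\circ_{A}z$. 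The main obstacle is the non-associativity of $\triangleleft_{A}$: each application of commutativity must be tracked to confirm that the four-term chains of the general theory collapse onto the three-term chain \meqref{eq:SDPP} and that the perm axiom is genuinely encoded in the three retained equations of \mref{s5}.
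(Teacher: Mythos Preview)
Your proposal is correct and follows essentially the same approach as the paper: the equivalences (\mref{s1})$\Leftrightarrow$(\mref{s3})$\Leftrightarrow$(\mref{s2}) are drawn from Proposition \mref{pro:SPA}, (\mref{s1})$\Leftrightarrow$(\mref{s4}) from Definition \mref{defi:generic pre-perm algebra2}, and (\mref{s1})$\Leftrightarrow$(\mref{s5}) from Proposition \mref{pdef:aa}, all under the key collapse $\bullet_{A}=\circ_{A}$ and $\mathcal{R}_{\triangleleft_{A}}=\mathcal{L}_{\triangleleft_{A}}$. The only cosmetic difference is that in the direction (\mref{s5})$\Rightarrow$(\mref{s1}) the paper verifies \meqref{eq:dpp1.2} by a direct computation using \meqref{eq:dpp1.1}, \meqref{eq:dpp1.4}, \meqref{eq:dpp1.5} rather than first assembling \meqref{eq:SDPP}, but the content is the same.
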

\begin{proof}

(\mref{s1}) $\Longleftrightarrow$ (\mref{s4}). It follows
immediately from Definition~\mref{defi:generic pre-perm algebra2}
in the case that $\triangleleft_{A}$ is commutative.

(\mref{s1}) $\Longleftrightarrow$ (\mref{s5}). Suppose that
Item (\mref{s1}) holds. Then by Proposition~\mref{pdef:aa},
\meqref{eq:dpp1.1}, \meqref{eq:dpp1.3} and  \meqref{eq:dpp1.4}
hold, which gives Item (\mref{s5}). Conversely, suppose that
Item (\mref{s5}) holds. By the commutativity of
$\triangleleft_{A}$, \meqref{eq:dpp1.5} gives rise to
\meqref{eq:dpp1.3}. Moreover, we have
\begin{eqnarray*}
&&x\triangleright_{A}(y\triangleright_{A}z+y\triangleleft_{A}z)+x\triangleleft_{A}(y\triangleright_{A}z+y\triangleleft_{A}z)-(x\triangleright_{A}y+x\triangleleft_{A} y)\triangleright_{A}z-(x\triangleright_{A}y+x\triangleleft_{A} y)\triangleleft_{A}z\\
&&\overset{\meqref{eq:dpp1.4},\meqref{eq:dpp1.5}}{=}x\triangleright_{A}(y\triangleleft_{A}z)+x\triangleleft_{A}(y\triangleleft_{A}z)+(z\triangleleft_{A} y)\triangleleft_{A}x\overset{\meqref{eq:dpp1.1}}{=}0,\\
&&x\triangleright_{A}(y\triangleright_{A}z+y\triangleleft_{A}z)+x\triangleleft_{A}(y\triangleright_{A}z+y\triangleleft_{A}z) -(y\triangleright_{A}x+y\triangleleft_{A} x)\triangleright_{A}z-(y\triangleright_{A}x+y\triangleleft_{A} x)\triangleleft_{A}z\\
&&\overset{\meqref{eq:dpp1.5}}{=}x\triangleright_{A}(y\triangleright_{A}z+y\triangleleft_{A}z)-(y\triangleright_{A}x+y\triangleleft_{A} x)\triangleright_{A}z\overset{\meqref{eq:dpp1.4}}{=}0.
\end{eqnarray*}
By Proposition~\mref{pdef:aa} and the commutativity of
$\triangleleft_{A}$, we obtain Item (\mref{s1}).

(\mref{s1}) $\Longleftrightarrow$ (\mref{s3})  $\Longleftrightarrow$
(\mref{s2}). It follows from  Proposition \mref{pro:SPA}.
\end{proof}

There is the following relationship between admissible averaging commutative algebras and \sdpps.

\begin{pro}\mlabel{pro:com asso and SDPP}
    Let $(A,\cdot_{A},P,Q)$ be an admissible averaging commutative
    algebra.
    Let $\triangleright_{A},\triangleleft_{A}:A\otimes A\rightarrow A$ be multiplications on $A$ defined by
    \begin{equation}\mlabel{eq:com asso and SDPP}
        x\triangleright_{A}y=P(x)\cdot_{A}y+Q(x\cdot_{A}y),\;
        x\triangleleft_{A}y=-Q(x\cdot_{A}y),\;\forall x,y\in A.
    \end{equation}
    Then
    $(A,\triangleright_{A},\triangleleft_{A})$ is a \sdpp.
\end{pro}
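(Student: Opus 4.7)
The plan is to verify the conditions of Corollary~\ref{cor:SDPP}, most conveniently Item~(\ref{s4}), so that the three things to check are: the commutativity of $\triangleleft_A$, the perm algebra axiom for $\circ_A := \triangleright_A + \triangleleft_A$, and equation~\meqref{eq:SDPP}.

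First, I would observe that $\triangleleft_A$ is automatically commutative: since $\cdot_A$ is commutative,
$$x\triangleleft_A y = -Q(x\cdot_A y) = -Q(y\cdot_A x) = y\triangleleft_A x.$$
Next, the sum $\circ_A$ collapses cleanly:
$$x\circ_A y = x\triangleright_A y + x\triangleleft_A y = P(x)\cdot_A y + Q(x\cdot_A y) - Q(x\cdot_A y) = P(x)\cdot_A y,$$
so $(A,\circ_A)$ is exactly the perm algebra produced from $(A,\cdot_A,P)$ via Proposition~\mref{ex:comm aver}.

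The heart of the argument is verifying \meqref{eq:SDPP}, i.e.\ for all $x,y,z\in A$,
$$(x\circ_A y)\triangleleft_A z \;=\; x\circ_A(y\triangleleft_A z) \;=\; -x\triangleleft_A(y\triangleleft_A z).$$
Expanding each term with the explicit formulas gives, respectively,
\begin{align*}
(x\circ_A y)\triangleleft_A z &= -Q\bigl(P(x)\cdot_A y\cdot_A z\bigr),\\
x\circ_A(y\triangleleft_A z) &= -P(x)\cdot_A Q(y\cdot_A z),\\
-x\triangleleft_A(y\triangleleft_A z) &= -Q\bigl(x\cdot_A Q(y\cdot_A z)\bigr).
\end{align*}
The equality of the first two lines is the identity $Q(P(x)\cdot_A w) = P(x)\cdot_A Q(w)$ applied with $w = y\cdot_A z$, while the equality of the last two lines is $P(x)\cdot_A Q(w) = Q(x\cdot_A Q(w))$ with the same $w$. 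Both are instances of the admissibility condition \meqref{eq:ao pair}.

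There is no real obstacle here; the identities fall out from a single application of \meqref{eq:ao pair} on each side, combined with the fact that the admissibility condition is precisely tailored to make this splitting into $\triangleright_A$ and $\triangleleft_A$ compatible. The only subtlety to note is that after establishing \meqref{eq:SDPP} and the commutativity of $\triangleleft_A$, the perm algebra structure on $(A,\circ_A)$ is already guaranteed by Proposition~\mref{ex:comm aver}, so Corollary~\mref{cor:SDPP}(\mref{s4}) applies and yields that $(A,\triangleright_A,\triangleleft_A)$ is a \sdpp.
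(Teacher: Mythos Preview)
Your proof is correct and proceeds by a more elementary route than the paper's. You invoke Item~(\mref{s4}) of Corollary~\mref{cor:SDPP} and verify the identity~\meqref{eq:SDPP} by direct expansion, reducing each equality to a single application of the admissibility condition~\meqref{eq:ao pair}. The paper instead uses Item~(\mref{s3}): it observes that the admissibility of $Q$ is precisely what makes $(A\ltimes_{\mathcal{L}^*_{\cdot_A}}A^*, P+Q^*)$ an averaging commutative algebra, computes the induced perm algebra structure on $A\oplus A^*$, and reads off that $(\mathcal{L}^*_{\circ_A},-\mathcal{L}^*_{\triangleleft_A},A^*)$ is a representation of $(A,\circ_A)$. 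Your approach is shorter and self-contained; the paper's approach is more conceptual in that it exhibits the representation on $A^*$ explicitly, which is the form needed later in the bialgebra constructions (e.g.\ Proposition~\mref{pro:5.2} and Theorem~\mref{thm:Manin triple}).
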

\begin{proof}Define an operation $\circ_A$ on $A$ by
    \meqref{eq:dpp2}. Then \meqref{eq:perm from aver op} holds and hence  by
    Proposition~\mref{ex:comm aver}, $(A,\circ_{A})$ is a perm algebra.
    By the assumption,
    $(A\ltimes_{\mathcal{L}^*_{\cdot_{A}}}A^*,P+Q^*)$ is an averaging
    commutative algebra. Thus there is a perm algebra
    $(A\oplus A^*,\circ_{d})$ with $\circ_d$ defined by
    \begin{eqnarray*}
        (x+a^{*})\circ_{d}(y+b^{*})&=&(P+Q^*)(x+a^{*})\cdot_{d}(y+b^{*})\\
        &=&\big(P(x)+Q^*(a^{*})\big)\cdot_{d}(y+b^{*})\\
        &=&P(x)\cdot_{A} y+\mathcal{L}^{*}_{\cdot_{A}}\big(P(x)\big)b^{*}+\mathcal{L}^{*}_{\cdot_{A}}(y)Q^{*}(a^{*})\\
        &\overset{\meqref{eq:perm from aver
                op},\meqref{eq:com asso and SDPP}}{=}&x\circ_{A}
        y+\mathcal{L}^{*}_{\circ_{A}}(x)b^{*}-\mathcal{L}^{*}_{\triangleleft_{A}}(y)a^{*},\;\;\forall
        x,y\in A, a^*,b^*\in A^*.
    \end{eqnarray*}
    That is, $(\mathcal{L}^{*}_{\circ_{A}},-\mathcal{L}^{*}_{\triangleleft_{A}},A^{*})$ is a representation of $(A,\circ_{A})$. Hence  $(A,\triangleright_{A},\triangleleft_{A})$ is a \sdpp which is compatible with $(A,\circ_{A})$.
\end{proof}

\begin{defi}
Let $(l,r,V)$ be a representation of a perm algebra
$(A,\circ_{A})$.  A \ddop $T:V^{*}\rightarrow A$ of $(A,\circ_A)$
associated to $(l,r,V)$ is called {\bf special} if
        \begin{equation}
            r^{*}\big(T(u^{*})\big)v^{*}=r^{*}\big(T(v^{*})\big)u^{*},\;\;\forall
            u^*,v^*\in V^*.
        \end{equation}
\end{defi}

\begin{cor}\mlabel{cor:44}
\begin{enumerate}
\item \mlabel{it:11} Let $T:V\rightarrow A^*$ be a \ddop of a perm
algebra $(A,\circ_A)$ associated to a representation $(l,r,V)$.
Define the multiplications
$\triangleright_{V^{*}},\triangleleft_{V^{*}}:V^{*}\otimes V^{*}\rightarrow V^{*}$
by \meqref{eq:dual mul}. Then $(V^{*},\triangleright_{V^{*}},\triangleleft_{V^{*}})$
is a \sdpp if and only if $T$ is strong and special.

\item \mlabel{it:22}  There is a compatible \sdpp structure
$(A,\triangleright_{A},\triangleleft_{A})$ on a perm algebra $(A,\circ_{A})$
 if and only if there is an invertible special \ddop $T$ of $(A,\circ_{A})$ associated to  $( l,r,V)$. In this case, the multiplications $\triangleright_{A},\triangleleft_{A}$ are defined by
   \meqref{deq:pro:2.14}.
\end{enumerate}
\end{cor}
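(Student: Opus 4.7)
The plan is to obtain both items as direct corollaries of the already-proved apre-perm versions, namely Proposition~\ref{pro:ddop} and Theorem~\ref{thm:dual 2}, by pinpointing what the extra commutativity of the second multiplication translates to on the operator side. The whole statement will fall out once we verify: \emph{commutativity of $\triangleleft_{V^*}$ (resp.\ $\triangleleft_A$) is equivalent to $T$ being special}. Once this dictionary is in place, no further perm-algebra identities need to be checked.

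For Item~(\ref{it:11}), I would start from Proposition~\ref{pro:ddop}, which already gives that $(V^{*},\triangleright_{V^{*}},\triangleleft_{V^{*}})$ with $\triangleright_{V^{*}},\triangleleft_{V^{*}}$ defined by \meqref{eq:dual mul} is an \dpp if and only if $T$ is strong. It therefore suffices to check that, given $T$ is strong, the multiplication $\triangleleft_{V^{*}}$ is commutative if and only if $T$ is special. Writing out the definition,
\[
u^{*}\triangleleft_{V^{*}}v^{*}=-r^{*}(Tv^{*})u^{*},
\]
so $\triangleleft_{V^{*}}$ is commutative precisely when $r^{*}(Tv^{*})u^{*}=r^{*}(Tu^{*})v^{*}$ for all $u^{*},v^{*}\in V^{*}$, which is exactly the condition for $T$ to be special. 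Combining the two equivalences gives Item~(\ref{it:11}).

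For Item~(\ref{it:22}), I would invoke Theorem~\ref{thm:dual 2} to get the bijection between compatible \dpp structures on $(A,\circ_{A})$ and invertible \ddop{}s of $(A,\circ_{A})$, with the multiplications given by \meqref{deq:pro:2.14}. In particular
\[
x\triangleleft_{A}y=-Tr^{*}(y)T^{-1}(x),\quad \forall x,y\in A.
\]
Since $T$ is invertible, commutativity of $\triangleleft_{A}$ amounts to $r^{*}(y)T^{-1}(x)=r^{*}(x)T^{-1}(y)$ for all $x,y\in A$; substituting $u^{*}=T^{-1}(x)$, $v^{*}=T^{-1}(y)$ (which is a bijective change of variables) rewrites this as $r^{*}(Tv^{*})u^{*}=r^{*}(Tu^{*})v^{*}$, that is, $T$ is special. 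Thus the compatible \dpp structure coming from Theorem~\ref{thm:dual 2} is a \sdpp exactly when the associated invertible \ddop is special, yielding Item~(\ref{it:22}).

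I do not expect any significant obstacle. The only point that requires a little care is tracking $T$ versus $T^{-1}$ in the formula for $\triangleleft_{A}$ when deriving the special condition in Item~(\ref{it:22}); once the change of variables $u^{*}=T^{-1}(x)$, $v^{*}=T^{-1}(y)$ is applied, the equivalence is immediate. No perm-algebra axioms need to be reverified because Proposition~\ref{pro:ddop} and Theorem~\ref{thm:dual 2} already supply the full \dpp structure; the corollary is genuinely a clean restriction to the commutative-$\triangleleft$ case.
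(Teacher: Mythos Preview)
Your argument is correct and follows the same route as the paper's proof, which simply records that Item~(\ref{it:11}) follows directly from Proposition~\ref{pro:ddop} and Item~(\ref{it:22}) from Theorem~\ref{thm:dual 2}. You have spelled out explicitly the one extra observation needed, namely that commutativity of $\triangleleft_{V^*}$ (resp.\ $\triangleleft_A$) is equivalent to the special condition on $T$, which the paper leaves implicit; the minor overstatement that Theorem~\ref{thm:dual 2} gives a ``bijection'' does not affect the argument, since only the existence statements together with the explicit formula~\eqref{deq:pro:2.14} are used.
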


\begin{proof}
(\mref{it:11}). It follows directly from
Proposition~\mref{pro:ddop}.

(\mref{it:22}). It follows directly from Theorem~\mref{thm:dual 2}.
\end{proof}

\begin{lem}\mlabel{lem:transfer111} Let $\mathcal{B}$ be a bilinear form on a perm algebra
$(A,\circ_{A})$ satisfying
\begin{eqnarray}\mlabel{eq:67}
    \mathcal{B}(x\circ_{A}y,z)=\mathcal{B}(y,x\circ_{A}z)=\mathcal{B}(x\circ_{A}z,y),\;\forall x,y,z\in A.
\end{eqnarray}
Then \meqref{eq:left inv1} holds. In particular, if $\mathcal{B}$
is a symmetric left-invariant bilinear form on a perm algebra
$(A,\circ_{A})$, then \meqref{eq:left inv1} holds.
\end{lem}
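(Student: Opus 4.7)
The approach is purely diagram-chasing from the two equalities packaged in \meqref{eq:67}. Write them separately as

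(i) $\mathcal{B}(x\circ_A y,z)=\mathcal{B}(y,x\circ_A z)$, i.e.\ the operator $\mathcal{L}_{\circ_A}(x)$ is self-adjoint for $\mathcal{B}$, and

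(ii) $\mathcal{B}(y,x\circ_A z)=\mathcal{B}(x\circ_A z,y)$, i.e.\ $\mathcal{B}$ becomes symmetric whenever one of its arguments lies in the image of some left multiplication $\mathcal{L}_{\circ_A}(x)$.

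\noindent Starting from the left-hand side of \meqref{eq:left inv1} and using (i) once, what remains to be verified is the single cancellation identity
\[
\mathcal{B}(y,z\circ_A x)=\mathcal{B}(x,z\circ_A y).
\]
The plan is to obtain both sides of this identity as two different rewrites of the common quantity $\mathcal{B}(z\circ_A x,y)$.

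To carry this out, first apply (ii) with the roles of $x$ and $z$ interchanged; since $z\circ_A x$ lies in the image of $\mathcal{L}_{\circ_A}(z)$, this gives $\mathcal{B}(y,z\circ_A x)=\mathcal{B}(z\circ_A x,y)$. Next apply (i) with the substitution $x\mapsto z$, $y\mapsto x$, $z\mapsto y$ to obtain $\mathcal{B}(z\circ_A x,y)=\mathcal{B}(x,z\circ_A y)$. Chaining these two equalities yields the desired cancellation, and substituting back into (i) produces \meqref{eq:left inv1}. For the ``in particular'' clause, note that if $\mathcal{B}$ is symmetric then (ii) is automatic, and if in addition $\mathcal{B}$ is left-invariant on $(A,\circ_A)$ then (i) is just the left-invariance condition; thus \meqref{eq:67} holds and the first part applies.

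There is essentially no analytic or combinatorial obstacle here: no perm-algebra axiom is needed, only two applications of \meqref{eq:67} with relabeled variables. The only point requiring a bit of care is bookkeeping the substitutions of the triple $(x,y,z)$ correctly, since the same identity in \meqref{eq:67} must be invoked twice under different permutations of the variables, and one must read (ii) as giving symmetry of $\mathcal{B}$ in the pair $(y,\,x\circ_A z)$ rather than in any two generic arguments.
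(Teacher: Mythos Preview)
Your proof is correct and essentially matches the paper's approach; the paper simply records the proof as ``It is straightforward'' without spelling out the two substitutions you make. Your bookkeeping of the relabelings in (i) and (ii) is accurate, and your handling of the ``in particular'' clause is exactly right.
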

\begin{proof}
It is straightforward.
\end{proof}

\begin{cor}\mlabel{cor:quadratic1}
Let $\mathcal{B}$ be a nondegenerate bilinear form on a perm
algebra $(A,\circ_{A})$ satisfying \meqref{eq:67}. Then there is a
compatible \sdpp structure
$(A,\triangleright_{A},\triangleleft_{A})$ with multiplications
$\triangleright_{A},\triangleleft_{A}:A\otimes A\rightarrow A$
given by \meqref{eq:cor3} and \meqref{eq:cor4} respectively.
 Moreover, $(  \mathcal{L}^{*}_{\circ_{A}},-\mathcal{L}^{*}_{\triangleleft_{A}},A^{*})$ and $(\mathcal{L} _{\circ_{A}},\mathcal{R} _{\circ_{A}},A)$ are equivalent as representations of $(A,\circ_{A})$.
Conversely, let $(A,\triangleright_{A},\triangleleft_{A})$ be a
compatible \sdpp structure on a perm algebra $(A,\circ_{A})$. If
 $(\mathcal{L} _{\circ_{A}},\mathcal{R} _{\circ_{A}},A)$ and $(
 \mathcal{L}^{*}_{\circ_{A}},-\mathcal{L}^{*}_{\triangleleft_{A}},A^{*})$
are
equivalent as representations of $(A,\circ_{A})$, then there
exists a nondegenerate bilinear form $\mathcal{B}$ on
$(A,\circ_{A})$ satisfying \meqref{eq:67}.
\end{cor}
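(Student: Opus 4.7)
The plan is to reduce both directions of the corollary to the previously established Theorem~\ref{thm:420} and Corollary~\ref{cor:inv bf}, extracting the additional commutativity of $\triangleleft_A$ (or equivalently, the extra symmetry of $\mathcal{B}$) by exploiting the difference between \eqref{eq:67} and \eqref{eq:left inv1}.

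For the forward direction, I would first invoke Lemma~\ref{lem:transfer111} to promote \eqref{eq:67} to \eqref{eq:left inv1}, and then apply Theorem~\ref{thm:420} to produce a compatible apre-perm algebra $(A,\triangleright_{A},\triangleleft_{A})$ with multiplications given by \eqref{eq:cor3} and \eqref{eq:cor4}, together with the equivalence of $(\mathcal{L}^{*}_{\bullet_{A}},-\mathcal{R}^{*}_{\triangleleft_{A}},A^{*})$ with the adjoint representation. The new content is to upgrade this apre-perm algebra to a special one by showing $\triangleleft_{A}$ is commutative. Using \eqref{eq:cor4}, for all $x,y,z\in A$,
\[
\mathcal{B}(x\triangleleft_{A}y - y\triangleleft_{A}x,\,z) = -\mathcal{B}(x, z\circ_{A}y) + \mathcal{B}(y, z\circ_{A}x),
\]
and three applications of the chain of equalities in \eqref{eq:67} (specifically the extra symmetry $\mathcal{B}(u\circ_{A}v,w)=\mathcal{B}(u\circ_{A}w,v)$, which is \emph{not} encoded in \eqref{eq:left inv1}) show that this right-hand side vanishes. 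Nondegeneracy then gives $x\triangleleft_{A}y=y\triangleleft_{A}x$. Once commutativity is in hand, $\bullet_{A}=\circ_{A}$ and $\mathcal{R}^{*}_{\triangleleft_{A}}=\mathcal{L}^{*}_{\triangleleft_{A}}$, so the representation equivalence from Theorem~\ref{thm:420} becomes exactly the one asserted in the statement.

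For the converse direction, since $\triangleleft_{A}$ is commutative, the representation $(\mathcal{L}^{*}_{\circ_{A}},-\mathcal{L}^{*}_{\triangleleft_{A}},A^{*})$ in the hypothesis coincides with $(\mathcal{L}^{*}_{\bullet_{A}},-\mathcal{R}^{*}_{\triangleleft_{A}},A^{*})$, so Corollary~\ref{cor:inv bf} directly yields a nondegenerate bilinear form $\mathcal{B}$ satisfying \eqref{eq:left inv1} together with the auxiliary identities \eqref{eq:cor3}, \eqref{eq:cor4}, and \eqref{eq:bf3}. To upgrade \eqref{eq:left inv1} to \eqref{eq:67}, note first that \eqref{eq:bf3} combined with $\bullet_{A}=\circ_{A}$ already gives the first equality $\mathcal{B}(x\circ_{A}y,z)=\mathcal{B}(y,x\circ_{A}z)$. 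For the second equality, the commutativity of $\triangleleft_{A}$ inserted into \eqref{eq:cor4} yields $\mathcal{B}(x,z\circ_{A}y) = \mathcal{B}(y,z\circ_{A}x)$; applying this symmetry to $\mathcal{B}(y, x\circ_{A}z)$ and then reapplying \eqref{eq:bf3} delivers $\mathcal{B}(y,x\circ_{A}z) = \mathcal{B}(x\circ_{A}z,y)$, completing \eqref{eq:67}.

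The main (very mild) obstacle is not computational but structural: recognizing that the single extra symmetry encoded in the second equality of \eqref{eq:67}, beyond the standard left-invariance \eqref{eq:left inv1}, is exactly equivalent—under the Theorem~\ref{thm:420} dictionary—to commutativity of $\triangleleft_{A}$. Once this correspondence is identified, both directions collapse to short calculations that pull back the nondegeneracy argument through \eqref{eq:cor4}.
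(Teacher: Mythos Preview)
Your proposal is correct and follows essentially the same approach as the paper: in the forward direction both you and the paper invoke Lemma~\ref{lem:transfer111} and Theorem~\ref{thm:420}, then use \eqref{eq:cor4} together with the extra symmetry in \eqref{eq:67} and nondegeneracy to force $\triangleleft_A$ to be commutative. For the converse, the paper defines $\mathcal{B}$ directly from the intertwiner $\phi$ and computes the two equalities in \eqref{eq:67} by hand, whereas you factor through Corollary~\ref{cor:inv bf} (and its proof) to obtain \eqref{eq:cor4} and \eqref{eq:bf3} before extracting \eqref{eq:67}; this is only an organizational difference, not a substantive one.
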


\begin{proof}
By Theorem \mref{thm:420} and Lemma \mref{lem:transfer111},
$(A,\triangleright_{A},\triangleleft_{A})$ is a compatible \dpp
structure on $(A,\circ_A)$. By \meqref{eq:67} and \meqref{eq:cor4},
we have
$$\mathcal{B}(x\triangleleft_{A}y,z)=-\mathcal{B}(x,z\circ_{A}
y)=-\mathcal{B}(y, z\circ_A
x)=\mathcal{B}(y\triangleleft_{A}x,z),\;\forall x,y,z\in A.$$ By
the nondegeneracy of $\mathcal{B}$, we have $
x\triangleleft_{A}y=y\triangleleft_{A}x $ for all $x,y\in A$ and
hence $(A,\triangleright_{A},\triangleleft_{A})$ is a \sdpp.
Define a multiplication $\bullet_{A}:A\otimes A\rightarrow A$
by \meqref{eq:sum}. By Theorem \mref{thm:420},
$(\mathcal{L}^{*}_{\bullet_{A}},-\mathcal{R}^{*}_{\triangleleft_{A}},A^{*})
=(
\mathcal{L}^{*}_{\circ_{A}},-\mathcal{L}^{*}_{\triangleleft_{A}},A^{*})$
and $(\mathcal{L} _{\circ_{A}},\mathcal{R} _{\circ_{A}},A)$ are
equivalent as representations of $(A,\circ_{A})$.

Conversely, suppose that $\phi:A\rightarrow A^{*}$ is the bijection giving the equivalence between   $(\mathcal{L} _{\circ_{A}},\mathcal{R} _{\circ_{A}},$
$A)$
and $(
\mathcal{L}^{*}_{\circ_{A}},-\mathcal{L}^{*}_{\triangleleft_{A}},A^{*})$ as representations of $(A,\circ_{A})$.
 Then there is a nondegenerate bilinear form $\mathcal{B}$ on $A$ given by \meqref{eq:phi2}.
Moreover, we have
\begin{eqnarray*}
&&\mathcal{B}(x\circ_{A}y,z)=\langle x\circ_{A}y,\phi(z)\rangle=\langle y,\mathcal{L}^{*}_{\circ_{A}}(x)\phi(z)\rangle=\langle y,\phi(x\circ_{A}z)\rangle=\mathcal{B}(y,x\circ_{A}z),\\
&&\mathcal{B}(x\circ_{A}y,z)=\langle x\circ_{A}y,\phi(z)\rangle=\langle x,\mathcal{R}^{*}_{\circ_{A}}(y)\phi(z)\rangle=-\langle x,\phi(\mathcal{L}_{\triangleleft_{A}}(y)z)\rangle=-\mathcal{B}(x,y\triangleleft_{A} z),
\end{eqnarray*}
for all $x,y,z\in A$. By the commutativity of
$\triangleleft_{A}$, we obtain
$\mathcal{B}(x\circ_{A}y,z)=\mathcal{B}(x\circ_{A}z,y)$ for
all $x,y,z\in A$. Hence the conclusion follows.
\end{proof}

\begin{lem}\mlabel{lem:337}
Let $(A,\triangleright_{A},\triangleleft_{A})$ be a \sdpp and $(A,\circ_{A})$ be
the associated perm algebra. Suppose that $\mathcal{B}$ is a symmetric bilinear form on $A$ satisfying \meqref{eq:cor4}.
Then \meqref{eq:li}, \meqref{eq:cor3} and the following equation hold:
\begin{equation}\mlabel{eq:cor3.37} \mathcal{B}(x\triangleright_{A}y,z)=\mathcal{B}(x,z\triangleright_{A}y),\;\forall x,y,z\in A.
\end{equation}
\end{lem}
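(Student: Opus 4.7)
The plan is to prove the three identities in the order \meqref{eq:li}, \meqref{eq:cor3}, \meqref{eq:cor3.37}, with essentially all the substance concentrated in \meqref{eq:li}, and with \meqref{eq:cor3} and \meqref{eq:cor3.37} then falling out by short algebraic rearrangement. All the ingredients are already at hand: the hypothesis \meqref{eq:cor4}, the symmetry of $\mathcal{B}$, and the commutativity of $\triangleleft_A$ built into the definition of a \sdpp.

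For \meqref{eq:li}, I would start from \meqref{eq:cor4} with $x$ and $z$ swapped, namely $\mathcal{B}(z\triangleleft_A y, x) = -\mathcal{B}(z, x\circ_A y)$. Applying the symmetry of $\mathcal{B}$ to both sides and then the commutativity of $\triangleleft_A$ rewrites this as $\mathcal{B}(x\circ_A y, z) = -\mathcal{B}(x, y\triangleleft_A z)$. A second application of \meqref{eq:cor4}, this time with the relabeling $(x,y,z)\mapsto(y,z,x)$, followed by one more use of the symmetry of $\mathcal{B}$, identifies the right-hand side with $\mathcal{B}(y, x\circ_A z)$, which is exactly \meqref{eq:li}.

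For \meqref{eq:cor3}, I would expand $x\triangleright_A y = x\circ_A y - x\triangleleft_A y$ inside $\mathcal{B}(x\triangleright_A y, z)$: the $\triangleleft_A$-term is converted by \meqref{eq:cor4} into $\mathcal{B}(x, z\circ_A y)$, and the $\circ_A$-term is turned into $\mathcal{B}(y, x\circ_A z)$ by \meqref{eq:li}. One more pass through the symmetry of $\mathcal{B}$ and \meqref{eq:li} gives $\mathcal{B}(x, z\circ_A y) = \mathcal{B}(z\circ_A y, x) = \mathcal{B}(y, z\circ_A x)$, so the two pieces combine into $\mathcal{B}(y, x\circ_A z + z\circ_A x)$, i.e. \meqref{eq:cor3}. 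The identity \meqref{eq:cor3.37} is then immediate: the right-hand side of \meqref{eq:cor3} is manifestly symmetric under $x\leftrightarrow z$, so $\mathcal{B}(x\triangleright_A y, z)=\mathcal{B}(z\triangleright_A y, x)$, and a final use of the symmetry of $\mathcal{B}$ yields $\mathcal{B}(x\triangleright_A y, z)=\mathcal{B}(x, z\triangleright_A y)$. I do not expect a real obstacle; the only noteworthy subtlety is that the commutativity of $\triangleleft_A$ is essential to swap $(y,z)$ inside $y\triangleleft_A z$ during step~\meqref{eq:li}, so the argument genuinely uses the \sdpp hypothesis rather than working for an arbitrary \dpp.
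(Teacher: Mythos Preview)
Your proposal is correct and follows essentially the same route as the paper: both arguments derive \meqref{eq:li} from \meqref{eq:cor4}, the symmetry of $\mathcal{B}$, and the commutativity of $\triangleleft_A$ (the paper phrases this as showing the difference $\mathcal{B}(x\circ_A y,z)-\mathcal{B}(y,x\circ_A z)$ equals $\mathcal{B}(-z\triangleleft_A y+y\triangleleft_A z,x)=0$, while you chain equalities), and then obtain \meqref{eq:cor3} and \meqref{eq:cor3.37} by the same short manipulations you describe.
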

\begin{proof}
Let  $x,y,z\in A$. Then we have
\begin{eqnarray*}
    \mathcal{B}(x\circ_{A}y,z)-\mathcal{B}(y,x\circ_{A} z)
    =\mathcal{B}(z,x\circ_{A}y)-\mathcal{B}(y,x\circ_{A} z)
    \overset{\meqref{eq:cor4}}{=}\mathcal{B}(-z\triangleleft_{A}y+y\triangleleft_{A}z,x)=0,
\end{eqnarray*}
that is, \meqref{eq:li} holds. Moreover, we have
\begin{eqnarray*}
&&\mathcal{B}(x\triangleright_{A} y,z)=\mathcal{B}(x\circ_{A}y-x\triangleleft_{A}y,z)=\mathcal{B}(x\circ_{A}y,z)+\mathcal{B}(x,z\circ_{A}y)\\
&&=\mathcal{B}(x\circ_{A}y,z)+\mathcal{B}(z\circ_{A}y,x)\overset{\meqref{eq:li}}{=}\mathcal{B}(y,x\circ_{A}z+z\circ_{A}x),
\end{eqnarray*}
that is, \meqref{eq:cor3} holds. Note that
$\mathcal{B}(y,x\circ_{A}z+z\circ_{A}x)$ is symmetric in $x$ and
$z$. Then we have
\begin{eqnarray*}
\mathcal{B}(x\triangleright_{A} y,z)=\mathcal{B}(z\triangleright_{A} y,x)=\mathcal{B}(x,z\triangleright_{A} y ),
\end{eqnarray*}
that is, \meqref{eq:cor3.37} holds.
\end{proof}

\begin{defi} A (symmetric) bilinear form $\mathcal{B}$ on a \sdpp  $(A,\triangleright_{A},\triangleleft_{A})$ is called
{\bf invariant} if \meqref{eq:cor4} holds. A {\bf quadratic \sdpp}
$(A,\triangleright_{A},\triangleleft_{A},\mathcal{B})$ is a \sdpp $(A,\triangleright_{A},\triangleleft_{A})$ together with a nondegenerate symmetric invariant
bilinear form $\mathcal{B}$.
\end{defi}

\begin{pro}\mlabel{pro:330}
Let $\mathcal{B}$ be a nondegenerate symmetric left-invariant
bilinear form on a perm algebra $(A,\circ_{A})$. Define
multiplications $\triangleleft_{A},\triangleright_{A}:A\otimes A\rightarrow A$
respectively by \meqref{eq:cor4} and the following equation
\begin{equation}\mlabel{eq:succ}
x\triangleright_{A}y=x\circ_{A}y-x\triangleleft_{A}y,\;\forall x,y\in A.
\end{equation}
Then $(A,\triangleright_{A},\triangleleft_{A},\mathcal{B})$ is a quadratic
\sdpp.

Conversely, let $(A,\triangleright_{A},\triangleleft_{A},\mathcal{B})$ be
a quadratic \sdpp. Then $\mathcal B$ is left-invariant on the
associated perm algebra $(A,\circ_A)$. Hence there is a
one-to-one correspondence between perm algebras with nondegenerate
symmetric left-invariant bilinear forms and quadratic \sdpps.
\end{pro}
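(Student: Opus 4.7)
The plan is to synthesize Corollary~\mref{cor:quadratic1}, Lemma~\mref{lem:transfer111}, and Lemma~\mref{lem:337}, which together already contain essentially all the content; only the mutual inverseness of the two constructions needs to be assembled explicitly at the end.

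For the forward direction, I would start with a nondegenerate symmetric left-invariant $\mathcal{B}$ on $(A,\circ_{A})$. Lemma~\mref{lem:transfer111} immediately gives that $\mathcal{B}$ satisfies~\meqref{eq:67}, and Corollary~\mref{cor:quadratic1} then produces a compatible \sdpp structure with operations defined by~\meqref{eq:cor3} and~\meqref{eq:cor4}. The operation $\triangleleft_{A}$ in the statement is defined by the same formula~\meqref{eq:cor4}, while $\triangleright_{A}:=\circ_{A}-\triangleleft_{A}$ is forced by compatibility $\triangleright_{A}+\triangleleft_{A}=\circ_{A}$; thus these agree with the operations produced by Corollary~\mref{cor:quadratic1}. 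Since $\mathcal{B}$ is nondegenerate and symmetric by hypothesis and invariant on the \sdpp by definition (as~\meqref{eq:cor4} holds), $(A,\triangleright_{A},\triangleleft_{A},\mathcal{B})$ is a quadratic \sdpp.

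For the converse, let $(A,\triangleright_{A},\triangleleft_{A},\mathcal{B})$ be a quadratic \sdpp with associated perm algebra $(A,\circ_{A})$, where $\circ_{A}=\triangleright_{A}+\triangleleft_{A}$. Since $\mathcal{B}$ is symmetric and satisfies~\meqref{eq:cor4} (by invariance on the \sdpp), Lemma~\mref{lem:337} immediately yields~\meqref{eq:li}, which is exactly the left-invariance of $\mathcal{B}$ on $(A,\circ_{A})$.

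For the one-to-one correspondence I would verify that the two constructions are mutually inverse. Starting from $((A,\circ_{A}),\mathcal{B})$ and applying the forward construction, the associated perm algebra of the resulting \sdpp is $\triangleright_{A}+\triangleleft_{A}=\circ_{A}$, so we recover the starting data. Conversely, beginning with a quadratic \sdpp and applying the forward construction to its associated perm algebra produces operations $\triangleleft_{A}'$, $\triangleright_{A}'$ defined by~\meqref{eq:cor4} with respect to the same $\circ_{A}$ and $\mathcal{B}$; invariance of $\mathcal{B}$ on the original \sdpp gives
\[
\mathcal{B}(x\triangleleft_{A}' y,z)=-\mathcal{B}(x,z\circ_{A}y)=\mathcal{B}(x\triangleleft_{A} y,z),\qquad\forall x,y,z\in A,
\]
and nondegeneracy of $\mathcal{B}$ then forces $\triangleleft_{A}'=\triangleleft_{A}$ and hence $\triangleright_{A}'=\triangleright_{A}$. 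The only nontrivial point, which is this mutual inverseness check, is essentially immediate once nondegeneracy is invoked, so I expect no serious obstacle; the heavy lifting has already been done by the preceding lemmas.
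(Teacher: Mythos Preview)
Your proposal is correct and follows essentially the same route as the paper, which simply cites Corollary~\mref{cor:quadratic1} for the forward direction and Lemma~\mref{lem:337} for the converse. One small quibble: Lemma~\mref{lem:transfer111} does not literally assert that symmetric left-invariance implies~\meqref{eq:67}; rather,~\meqref{eq:67} is immediate from the definitions (left-invariance gives the first equality, symmetry the second), and the lemma then passes to~\meqref{eq:left inv1}. Your explicit check of mutual inverseness is a welcome addition that the paper leaves implicit.
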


\begin{proof} The first part follows directly from
Corollary \mref{cor:quadratic1}.
The second part follows from Lemma \mref{lem:337}.
\end{proof}

\begin{cor}
\mlabel{cor:2.14} Let $(A,\cdot_{A},\mathcal{B})$ be a symmetric
Frobenius commutative algebra and $P:A\rightarrow A$ be an
averaging operator. Let $(A,\circ_A)$ be the perm algebra
defined by \meqref{eq:perm from aver op}. Then there is a
compatible  \sdpp structure $(A,\triangleright_{A},\triangleleft_{A})$ on
$(A,\circ_A)$ with $\triangleright_A,\triangleleft_A$ defined by
\begin{equation}\mlabel{eq:90}
x\triangleright_{A}y=P(x)\cdot_{A}y+\widehat{P}(x\cdot_{A}y),\; x\triangleleft_{A}y=-\widehat{P}(x\cdot_{A}y),\;\forall x,y\in A.
\end{equation}
Moreover, $(A,\triangleright_{A},\triangleleft_{A},\mathcal{B})$ is quadratic.
\end{cor}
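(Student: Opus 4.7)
The plan is to chain together two earlier results to get the \sdpp structure for free, and then verify the invariance of $\mathcal{B}$ by a short direct calculation using the defining property of $\widehat{P}$.

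First, I would invoke Proposition~\mref{pro:2.6}: since $(A,\cdot_{A},\mathcal{B})$ is a (symmetric) Frobenius commutative algebra and $P$ is an averaging operator on $(A,\cdot_{A})$, the quadruple $(A,\cdot_{A},P,\widehat{P})$ is an admissible averaging commutative algebra, where $\widehat{P}$ is the adjoint of $P$ with respect to $\mathcal{B}$. Next, I would apply Proposition~\mref{pro:com asso and SDPP} to this admissible averaging commutative algebra with $Q=\widehat{P}$: the formulas \meqref{eq:com asso and SDPP} coincide with \meqref{eq:90}, so we obtain that $(A,\triangleright_{A},\triangleleft_{A})$ is a \sdpp compatible with the associated perm algebra, which is precisely $(A,\circ_{A})$ from \meqref{eq:perm from aver op}. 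This takes care of the first assertion with no extra work.

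For the quadratic part, it suffices to check invariance of $\mathcal{B}$ on $(A,\triangleright_{A},\triangleleft_{A})$, i.e.\ \meqref{eq:cor4}, since $\mathcal B$ is already assumed symmetric and nondegenerate. Fix $x,y,z\in A$. Using the definition of $\triangleleft_{A}$ in \meqref{eq:90}, the defining property $\mathcal{B}(\widehat{P}(u),v)=\mathcal{B}(u,P(v))$, the invariance of $\mathcal B$ on $(A,\cdot_{A})$, the commutativity of $\cdot_{A}$, and finally \meqref{eq:perm from aver op}, one computes
\begin{eqnarray*}
\mathcal{B}(x\triangleleft_{A}y,z)
&=& -\mathcal{B}\bigl(\widehat{P}(x\cdot_{A}y),z\bigr)
= -\mathcal{B}\bigl(x\cdot_{A}y,P(z)\bigr)\\
&=& -\mathcal{B}\bigl(x,y\cdot_{A}P(z)\bigr)
= -\mathcal{B}\bigl(x,P(z)\cdot_{A}y\bigr)
= -\mathcal{B}(x,z\circ_{A}y),
\end{eqnarray*}
which is exactly \meqref{eq:cor4}. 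Combined with the hypothesis that $\mathcal B$ is symmetric and nondegenerate, this shows $(A,\triangleright_{A},\triangleleft_{A},\mathcal{B})$ is a quadratic \sdpp.

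There is essentially no obstacle here: all the structural content has been absorbed into Propositions~\mref{pro:2.6} and \mref{pro:com asso and SDPP}, and the only genuine computation is the four-line verification of invariance above, whose only ingredient beyond definitions is the adjointness $\mathcal{B}(P(\cdot),\cdot)=\mathcal{B}(\cdot,\widehat{P}(\cdot))$ together with commutativity of $\cdot_{A}$. One could alternatively derive invariance by combining Corollary~\mref{cor:quadratic1} with Proposition~\mref{pro:2.1} (which gives left-invariance of $\mathcal B$ on $(A,\circ_A)$) and Lemma~\mref{lem:transfer111}, but the direct check is shorter.
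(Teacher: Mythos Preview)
Your proof is correct, but it takes a different route from the paper's. The paper first applies Proposition~\mref{pro:2.1} to obtain that $\mathcal{B}$ is nondegenerate symmetric left-invariant on $(A,\circ_A)$, then invokes Corollary~\mref{cor:quadratic1} to produce a compatible \sdpp, and finally computes the explicit formulas \meqref{eq:90} from \meqref{eq:cor4} via the adjointness of $P$ and $\widehat{P}$; the quadratic property then comes for free. You instead go through Proposition~\mref{pro:2.6} and Proposition~\mref{pro:com asso and SDPP} to get the \sdpp with the explicit formulas immediately, and then verify \meqref{eq:cor4} directly. Your route is slightly more economical in that the explicit operations appear at once rather than being extracted from the bilinear form, and indeed the paper's remark immediately following the corollary explicitly acknowledges your argument as an alternative derivation of the \sdpp structure. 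What the paper's route buys is that the quadratic property is automatic from Corollary~\mref{cor:quadratic1}, whereas you must add the short invariance check; conversely, you avoid invoking the somewhat heavier Corollary~\mref{cor:quadratic1}.
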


\begin{proof} By Proposition~\mref{pro:2.1}, $\mathcal B$ is a
nondegenerate symmetric left-invariant bilinear form on
$(A,\circ_A)$. By Corollary \mref{cor:quadratic1}, there is a
compatible  \sdpp $(A,\triangleright_{A},\triangleleft_{A},\mathcal{B})$, where
\begin{eqnarray*}
&&\mathcal{B}(x\triangleleft_{A}y,z)=-\mathcal{B}(z\circ_{A}x,y)
=-\mathcal{B}\big(P(z)\cdot_{A}x,y\big)=-\mathcal{B}\big(\widehat{P}(x\cdot_{A}y),z\big),\;\forall x,y,z\in A.\mlabel{eq:hatP1}
\end{eqnarray*}
By the nondegeneracy of $\mathcal{B}$, we have
\begin{equation*}
x\triangleleft_{A}y=-\widehat{P}(x\cdot y), \quad
x\triangleright_{A}y=x\circ_{A}y-x\triangleleft_{A}y=P(x)\cdot_{A}y+\widehat{P}(x\cdot_{A}y),\;\;\forall
x,y\in A.
\end{equation*}
Then it is obvious that $(A,\triangleright_{A},\triangleleft_{A},\mathcal{B})$ is
quadratic.
\end{proof}

\begin{rmk}
The fact that $(A,\triangleright_{A},\triangleleft_{A})$ is a \sdpp in Corollary
\mref{cor:2.14} can also be obtained from \meqref{eq:com asso and
SDPP} in Proposition \mref{pro:com asso and SDPP} since
$(A,\cdot_A,P,\widehat P)$ is an admissible averaging commutative
algebra due to Proposition~\mref{pro:2.6}.
\end{rmk}

\begin{ex}\mlabel{ex:2.21}
Continuing with Example \mref{pre-ex:2.2} for $V=A^{*}$,
$\mu=\mathcal{L}^{*}_{\cdot_{A}}$, there is a quadratic \sdpp
$(A\oplus
A^{*},\triangleright_{d},\triangleleft_{d},\mathcal{B}_{d})$ with
$\triangleright_d$ and $\triangleleft_d$ defined respectively by
\begin{eqnarray*}\mlabel{eq:2.21}
(x+a^{*})\triangleleft_{d}(y+b^{*})&=&-\widehat{P}\big((x+a^{*})\cdot_{d}(y+b^{*})\big)=-\mathcal{L}^{*}_{\cdot_{A}}(x)b^{*}-\mathcal{L}^{*}_{\cdot_{A}}(y)a^{*},\\
(x+a^{*})\triangleright_{d}(y+b^{*})&=&P(x+a^{*})\cdot_{d}(y+b^{*})+\widehat{P}\big((x+a^{*})\cdot_{d}(y+b^{*})\big)\\
    &=&x\cdot_{A}y+2\mathcal{L}^{*}_{\cdot_{A}}(x)b^{*}+\mathcal{L}^{*}_{\cdot_{A}}(y)a^{*},\;\;\forall x,y\in A, a^*,b^*\in A^*.
\end{eqnarray*}
\end{ex}

\begin{ex}
Continuing with Example \mref{pre-ex:2.3}, there is a quadratic
\sdpp $(A\otimes A,\triangleright,\triangleleft,\mathcal{B}')$
with $\triangleright$ and $\triangleleft$ defined respectively by
\begin{eqnarray*}
(x\otimes y)\triangleleft(z\otimes w)&=&-\widehat{P}\big( (x\otimes y)\cdot (z\otimes w)\big)=-x\cdot_{A} z\otimes y\cdot_{A} w-y\cdot_{A} w\otimes x\cdot_{A} z,\\%,\;\forall x,y,z,w\in A.
(x\otimes y)\triangleright(z\otimes w)&=&P(x\otimes y)\cdot(z\otimes w)+\widehat{P}\big( (x\otimes y)\cdot (z\otimes w)\big)\\
&=&y\cdot_{A} z\otimes x\cdot_{A} w+2x\cdot_{A} z\otimes y\cdot_{A}
w+y\cdot_{A} w\otimes x\cdot_{A}
z,\;\;\forall x,y,z,w\in A.
\end{eqnarray*}
\end{ex}

\begin{ex}
Continuing with Example \mref{ex:multiplication}, there
exists a \sdpp $(A,\triangleright_{A},\triangleleft_{A})$ with $\triangleright_A$ and $\triangleleft_A$
defined by \meqref{eq:90}, whose  nonzero products are
\begin{eqnarray*}
&&e_{1}\triangleright_{A} e_{1}=e_{1},\; e_{1}\triangleright_{A} e_{2}=e_{2}\triangleright_{A} e_{1}=e_{2},\;e_{1}\triangleright_{A} e_{3}=e_{2}\triangleright_{A} e_{4}=2e_{3},\;e_{1}\triangleright_{A} e_{4}=2e_{4},\\
&&e_{3}\triangleright_{A} e_{1}=e_{4}\triangleright_{A} e_{2}=e_{3},\;e_{4}\triangleright_{A} e_{1}=e_{4},\;e_{1}\triangleleft_{A} e_{3}=e_{2}\triangleleft_{A} e_{4}=-e_{3},\;e_{1}\triangleleft_{A} e_{4}=-e_{4}.
\end{eqnarray*}
\end{ex}

In the following, we show that \sdpps give rise to both pre-Lie algebras and anti-pre-Lie algebras.

\begin{defi}
    \begin{enumerate}
        \item \mcite{Bai2021.2,Bur}
        A {\bf pre-Lie algebra} is a vector space $A$ together with a multiplication $\star_{A}:A\otimes A\rightarrow A$ such that
        \begin{equation*}
            (x\star_{A} y)\star_{A} z-x\star_{A}(y\star_{A} z)=(y\star_{A} x)\star_{A} z-y\star_{A}(x\star_{A} z),\;\forall x,y,z\in A.
        \end{equation*}
        \item
        \mcite{LB2022}
        An {\bf anti-pre-Lie algebra} is a vector space $A$ together with a multiplication $\diamond_{A}:A\otimes A\rightarrow A$ such that $(A,[-,-]_A)$ is a Lie algebra with the multiplication $[-,-]_A$ defined by
        \vsc
        \begin{eqnarray}\label{eq:sub Lie}
            [x,y]_{A}=x\diamond_{A}y-y\diamond_{A}x,\;\forall x,y\in A,
        \end{eqnarray}
and the following equation holds:
        \begin{eqnarray}
x\diamond_{A}(y\diamond_{A}z)-y\diamond_{A}(x\diamond_{A}z)=[y,x]_{A}\diamond_{A}z,\;\;\forall
            x,y,z\in A.\mlabel{eq:anti-pre-Lie1}
        \end{eqnarray}
    \end{enumerate}
\end{defi}

\begin{lem}\mcite{LB2022}
    Let $\mathcal{B}$ be a nondegenerate commutative $2$-cocycle on a Lie algebra $(A,[-,-]_{A})$ and $\diamond_{A}:A\otimes A\rightarrow A$ be a multiplication on $A$ given by
    \begin{eqnarray}
\mathcal{B}(x\diamond_{A}y,z)=\mathcal{B}(y,[x,z]_{A}),\;\forall x,y,z\in A.
    \end{eqnarray}
Then $(A,\diamond_{A})$ is an anti-pre-Lie algebra which is compatible with $(A,[-,-]_{A})$ in the sense of \eqref{eq:sub Lie}.
\end{lem}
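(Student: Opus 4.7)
The plan is to establish both claims by pairing everything with the nondegenerate bilinear form $\mathcal{B}$ and then invoking nondegeneracy, reducing the two desired identities to the Jacobi identity and the commutative 2-cocycle condition respectively.

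First, I would verify compatibility in the sense of \eqref{eq:sub Lie}, namely $[x,y]_A = x\diamond_A y - y\diamond_A x$. Using the defining identity $\mathcal{B}(x\diamond_A y, z) = \mathcal{B}(y, [x,z]_A)$ together with the symmetry of $\mathcal{B}$, one directly computes
\begin{equation*}
\mathcal{B}(x\diamond_A y - y\diamond_A x,\, z) = \mathcal{B}(y, [x,z]_A) - \mathcal{B}(x, [y,z]_A).
\end{equation*}
Applying the symmetry of $\mathcal{B}$ to each term of the commutative 2-cocycle identity \eqref{eq:c2c} evaluated at $(x,y,z)$ rearranges to
\begin{equation*}
\mathcal{B}([x,y]_A, z) = \mathcal{B}(y, [x,z]_A) - \mathcal{B}(x, [y,z]_A).
\end{equation*}
Matching these two expressions and using nondegeneracy of $\mathcal{B}$ yields $[x,y]_A = x\diamond_A y - y\diamond_A x$, as required.

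Next, for the anti-pre-Lie axiom \eqref{eq:anti-pre-Lie1}, I would pair both sides of
\begin{equation*}
x\diamond_A(y\diamond_A z) - y\diamond_A(x\diamond_A z) - [y,x]_A\diamond_A z
\end{equation*}
with an arbitrary $w$. Iterating the defining identity gives
\begin{align*}
\mathcal{B}(x\diamond_A(y\diamond_A z), w) &= \mathcal{B}(z, [y, [x,w]_A]_A), \\
\mathcal{B}(y\diamond_A(x\diamond_A z), w) &= \mathcal{B}(z, [x, [y,w]_A]_A), \\
\mathcal{B}([y,x]_A\diamond_A z, w) &= \mathcal{B}(z, [[y,x]_A, w]_A).
\end{align*}
Collecting these, the pairing with $w$ becomes $\mathcal{B}\bigl(z,\, [y, [x,w]_A]_A - [x, [y,w]_A]_A - [[y,x]_A, w]_A\bigr)$, and the bracket expression vanishes by the Jacobi identity of $(A,[-,-]_A)$. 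Nondegeneracy of $\mathcal{B}$ then gives \eqref{eq:anti-pre-Lie1}.

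The step I expect to require the most care is the first one: unpacking the commutative 2-cocycle condition \eqref{eq:c2c} and distributing the symmetry of $\mathcal{B}$ across its three terms to land precisely on the expression $\mathcal{B}(y, [x,z]_A) - \mathcal{B}(x, [y,z]_A)$, where sign bookkeeping must be done correctly. The second step, in contrast, is essentially the Jacobi identity viewed through the defining formula, and should proceed without complication.
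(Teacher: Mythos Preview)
Your argument is correct. Both steps are sound: the compatibility follows exactly from the commutative $2$-cocycle identity rewritten via the symmetry of $\mathcal{B}$, and the anti-pre-Lie identity reduces to the Jacobi identity as you show. Note, however, that the paper itself does not supply a proof of this lemma; it is quoted from \cite{LB2022}, so there is no in-paper argument to compare against. Your proof is the standard one and is precisely what the original reference does.
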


\begin{pro}\mlabel{pro:SDPP and pre-Lie}
    \mlabel{pro:SDPP and anti}
    Let $(A,\triangleright_{A},\triangleleft_{A})$ be a \sdpp. Then $(A,\triangleright_{A})$ is a pre-Lie algebra; while $(A,\diamond_{A}=\triangleright_{A}+2\triangleleft_{A})$ is
    an anti-pre-Lie algebra.
    In particular, if $(A,\triangleright_{A},\triangleleft_{A},\mathcal{B})$ is a quadratic \sdpp, then $(A,\diamond_{A})$ is exactly the anti-pre-Lie algebra induced from the nondegenerate commutative $2$-cocycle $\mathcal{B}$ of the sub-adjacent Lie algebra $(A,[-,-]_{A})$ of the associated perm algebra $(A,\circ_{A})$.
\end{pro}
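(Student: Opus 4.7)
The plan is to derive both the pre-Lie and anti-pre-Lie identities from the SAPP axioms, with the crux being the ``mixed symmetry''
\[
(y\triangleright_{A}z)\triangleleft_{A}x = (x\triangleright_{A}z)\triangleleft_{A}y,
\]
which I refer to as $(\ast)$ and prove first. Applying \meqref{eq:dpp1.5} and using commutativity of $\triangleleft_{A}$ gives $(y\triangleright_{A}z)\triangleleft_{A}x = -(x\triangleleft_{A}z)\triangleleft_{A}y - x\triangleleft_{A}(y\triangleleft_{A}z)$; swapping $x \leftrightarrow y$ and applying commutativity once more shows the right-hand side is symmetric in $x,y$, yielding $(\ast)$. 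For the pre-Lie identity, I rewrite \meqref{eq:dpp1.4} as $(x\triangleright_{A}y)\triangleright_{A}z - x\triangleright_{A}(y\triangleright_{A}z) = (y\triangleright_{A}z)\triangleleft_{A}x - (x\triangleleft_{A}y)\triangleright_{A}z$, then subtract the version with $x,y$ swapped. Commutativity of $\triangleleft_{A}$ cancels the $(x\triangleleft_{A}y)\triangleright_{A}z$ terms, while $(\ast)$ cancels the $(y\triangleright_{A}z)\triangleleft_{A}x$ terms, giving the pre-Lie axiom for $\triangleright_{A}$.

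For the anti-pre-Lie assertion, commutativity of $\triangleleft_{A}$ immediately gives $[x,y]_{A} := x\diamond_{A}y - y\diamond_{A}x = x\circ_{A}y - y\circ_{A}x$, recovering the sub-adjacent Lie bracket of the perm algebra $(A,\circ_{A})$ which is known to be a Lie algebra. Next, I expand $x\diamond_{A}(y\diamond_{A}z)$ using $\diamond_{A} = \triangleright_{A} + 2\triangleleft_{A}$; rewriting \meqref{eq:dpp1.1} via commutativity as $x\triangleright_{A}(y\triangleleft_{A}z) = -2x\triangleleft_{A}(y\triangleleft_{A}z)$ causes the terms involving $y\triangleleft_{A}z$ to collapse, leaving $x\diamond_{A}(y\diamond_{A}z) = x\triangleright_{A}(y\triangleright_{A}z) + 2x\triangleleft_{A}(y\triangleright_{A}z)$. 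Subtracting the version with $x,y$ swapped, the pre-Lie identity proved above and the symmetry $(\ast)$ reduce the difference to $[x,y]_{A}\triangleright_{A}z$. Finally, the perm algebra identity $(x\circ_{A}y)\circ_{A}z = (y\circ_{A}x)\circ_{A}z$ forces $[x,y]_{A}\circ_{A}z = 0$, so $[x,y]_{A}\triangleright_{A}z = -[x,y]_{A}\triangleleft_{A}z$, whence $[x,y]_{A}\triangleright_{A}z = [y,x]_{A}\diamond_{A}z$, which is exactly \meqref{eq:anti-pre-Lie1}.

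For the quadratic case, Proposition~\mref{pro:330} ensures $\mathcal{B}$ is left-invariant on $(A,\circ_{A})$, so Proposition~\mref{pro:4.6} makes it a nondegenerate commutative $2$-cocycle on $(A,[-,-]_{A})$. Using \meqref{eq:cor3}, \meqref{eq:cor4}, together with symmetry of $\mathcal{B}$ and \meqref{eq:67}, a direct computation gives both $\mathcal{B}(x\diamond_{A}y,z) = \mathcal{B}(y,x\circ_{A}z) - \mathcal{B}(x,z\circ_{A}y)$ and $\mathcal{B}(y,[x,z]_{A}) = \mathcal{B}(y,x\circ_{A}z) - \mathcal{B}(x,z\circ_{A}y)$, so $\diamond_{A}$ is precisely the induced anti-pre-Lie multiplication from $\mathcal{B}$ on $(A,[-,-]_{A})$. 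The main obstacle is locating and proving $(\ast)$: it is not among the listed SAPP axioms but is the hidden consequence that simultaneously drives the pre-Lie identity and supplies the critical cancellation in the anti-pre-Lie identity; the unusual coefficient $2$ in $\diamond_{A} = \triangleright_{A} + 2\triangleleft_{A}$ is then precisely what makes the expansion of $x\diamond_A(y\diamond_A z)$ reduce cleanly via \meqref{eq:dpp1.1}.
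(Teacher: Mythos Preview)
Your proof is correct. For the pre-Lie part and the quadratic part it is essentially the paper's argument: the paper's computation for pre-Lie reduces to exactly your symmetry $(\ast)$, derived the same way from \meqref{eq:dpp1.5} and commutativity, only the paper proves it inline rather than isolating it as a lemma; and for the quadratic statement both proofs compute $\mathcal{B}(x\diamond_{A}y,z)=\mathcal{B}(x\circ_{A}y,z)+\mathcal{B}(x\triangleleft_{A}y,z)$ via \meqref{eq:cor4} and left-invariance.

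For the anti-pre-Lie identity, however, your route is genuinely different and cleaner than the paper's. The paper expands all sixteen terms of $x\diamond_{A}(y\diamond_{A}z)-y\diamond_{A}(x\diamond_{A}z)-[y,x]_{A}\diamond_{A}z$ directly and reduces them in three passes using \meqref{eq:dpp1.1}, \meqref{eq:dpp1.5}, \meqref{eq:dpp1.4} in sequence. You instead first collapse $x\diamond_{A}(y\diamond_{A}z)$ to $x\triangleright_{A}(y\triangleright_{A}z)+2x\triangleleft_{A}(y\triangleright_{A}z)$ via \meqref{eq:dpp1.1}, then reuse the pre-Lie identity and $(\ast)$ you already established, and finish with the perm-algebra observation that $[x,y]_{A}\circ_{A}z=0$ forces $[x,y]_{A}\triangleright_{A}z=[y,x]_{A}\diamond_{A}z$. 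This explains structurally why the coefficient $2$ in $\diamond_{A}=\triangleright_{A}+2\triangleleft_{A}$ is exactly right and recycles the pre-Lie result rather than repeating the same manipulations; the paper's brute-force expansion, by contrast, does not make this dependence visible.
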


\begin{proof} Let $x,y,z\in A$.
    By the assumption, we have
    \begin{eqnarray*}
        &&(x\triangleright_{A}y)\triangleright_{A}z-(y\triangleright_{A}x)\triangleright_{A}z-x\triangleright_{A}(y\triangleright_{A}z)
        +y\triangleright_{A}(x\triangleright_{A}z)\\
        &&\overset{\meqref{eq:dpp1.4}}{=}(y\triangleright_{A} z)\triangleleft_{A}x-(x\triangleleft_{A}y)\triangleright_{A}z
        -(x\triangleright_{A} z)\triangleleft_{A}y+(y\triangleleft_{A}x)\triangleright_{A}z\\
        &&=(y\triangleright_{A} z)\triangleleft_{A}x
        -(x\triangleright_{A} z)\triangleleft_{A}y \\
        &&\overset{\meqref{eq:dpp1.5}}{=}-(x\triangleleft_{A}z)\triangleleft_{A}y-x\triangleleft_{A}(y\triangleleft_{A}z)
        +y\triangleleft_{A}(x\triangleleft_{A}z)+(y\triangleleft_{A}z)\triangleleft_{A}x\\
        &&=0.
    \end{eqnarray*}
  Hence $(A,\triangleright_{A})$ is a pre-Lie algebra.

    On the other hand, we also have
        \begin{eqnarray*}
            &&x\diamond_{A}(y\diamond_{A}z)-y\diamond_{A}(x\diamond_{A}z)-[y,x]_{A}\diamond_{A}z\\
            &&=x\triangleright_{A}(y\triangleright_{A}z)+2x\triangleright_{A}(y\triangleleft_{A}z)+2x\triangleleft_{A}(y\triangleright_{A}z)
            +4x\triangleleft_{A}(y\triangleleft_{A}z)-y\triangleright_{A}(x\triangleright_{A}z)\\
            &&\ \
            -2y\triangleright_{A}(x\triangleleft_{A}z)-2y\triangleleft_{A}(x\triangleright_{A}z)-4y\triangleleft_{A}(x\triangleleft_{A}z)
            -(y\triangleright_{A}x)\triangleright_{A}z-2(y\triangleright_{A}x)\triangleleft_{A}z\\
            &&\ \
            +(x\triangleright_{A}y)\triangleright_{A}z+2(x\triangleright_{A}y)\triangleleft_{A}z\\
            &&\overset{\meqref{eq:dpp1.1}}{=}x\triangleright_{A}(y\triangleright_{A}z)+2x\triangleleft_{A}(y\triangleright_{A}z)
            -y\triangleright_{A}(x\triangleright_{A}z)-2y\triangleleft_{A}(x\triangleright_{A}z)-(y\triangleright_{A}x)\triangleright_{A}z\\
            &&\ \
            -2(y\triangleright_{A}x)\triangleleft_{A}z+(x\triangleright_{A}y)\triangleright_{A}z+2(x\triangleright_{A}y)\triangleleft_{A}z\\
            &&\overset{\meqref{eq:dpp1.5}}{=}x\triangleright_{A}(y\triangleright_{A}z)-2x\triangleleft_{A}(y\triangleleft_{A}z)
            -2(x\triangleleft_{A}z)\triangleleft_{A}y-y\triangleright_{A}(x\triangleright_{A}z)+2y\triangleleft_{A}(x\triangleleft_{A}z)\\
            &&\ \
            +2(y\triangleleft_{A}z)\triangleleft_{A}x-(y\triangleright_{A}x)\triangleright_{A}z+2z\triangleleft_{A}(y\triangleleft_{A}x)
            +2(z\triangleleft_{A}x)\triangleleft_{A}y+(x\triangleright_{A}y)\triangleright_{A}z\\
            &&\ \
            -2z\triangleleft_{A}(x\triangleleft_{A}y)-2(z\triangleleft_{A}y)\triangleleft_{A}x\\
            &&=x\triangleright_{A}(y\triangleright_{A}z)-y\triangleright_{A}(x\triangleright_{A}z)
            +2(z\triangleleft_{A}x)\triangleleft_{A}y+(x\triangleright_{A}y-y\triangleright_{A}x)\triangleright_{A}z
            -2(z\triangleleft_{A}y)\triangleleft_{A}x\\
            &&\overset{\meqref{eq:dpp1.4}}{=}y\triangleright_{A}(x\triangleleft_{A}z)-x\triangleright_{A}(y\triangleleft_{A}z)
            +2(z\triangleleft_{A}x)\triangleleft_{A}y-2(z\triangleleft_{A}y)\triangleleft_{A}x\\
            &&\overset{\meqref{eq:dpp1.1}}{=}0.
        \end{eqnarray*}
    Since $\triangleleft_{A}$ is commutative, we have
    \begin{eqnarray*}
&&  [x,y]_A:=x\diamond_A y-y\diamond_A x=
    x\triangleright_{A}y+2x\triangleleft_{A}y-y\triangleright_{A}x-2y\triangleleft_{A}x\\
    &&= x\triangleright_{A}y+ x\triangleleft_{A}y-y\triangleright_{A}x- y\triangleleft_{A}x
    =x\circ_{A}y-y\circ_{A}x,\;\;\forall x,y\in A.
    \end{eqnarray*}
    Since $(A,\circ_{A})$ is a perm algebra,
    $(A,[-,-]_{A})$ is a Lie algebra.
     Hence $(A,\diamond_{A})$
    is an anti-pre-Lie algebra.

In particular, if $(A,\triangleright_{A},\triangleleft_{A},\mathcal{B})$ is a quadratic \sdpp, then we have
\begin{eqnarray*}
\mathcal{B}(x\diamond_{A}y,z)=\mathcal{B}(x\circ_{A}y,z)+\mathcal{B}(x\triangleleft_{A} y,z)\overset{\meqref{eq:cor4}}{=}\mathcal{B}(y,x\circ_{A}z)-\mathcal{B}(y,z\circ_{A}x)=\mathcal{B}(y,[x,z]_{A}).
\end{eqnarray*}
Hence   $(A,\diamond_{A})$ is exactly the anti-pre-Lie algebra induced from the nondegenerate commutative $2$-cocycle $\mathcal{B}$ of the sub-adjacent Lie algebra $(A,[-,-]_{A})$ of the associated perm algebra $(A,\circ_{A})$.
\end{proof}

\section{Special apre-perm bialgebras}\mlabel{sec:5}\
We introduce the notions of Manin triples of \sdpps and Manin
triples of perm algebras associated to nondegenerate symmetric
left-invariant bilinear forms, together with their relationship.
Then we introduce the notion of a \sdppb as an equivalent
structure of a Manin triple of \sdpps, which can be derived from
an averaging commutative and cocommutative infinitesimal bialgebra.

\subsection{Manin triples of perm algebras and  \sdpps}\mlabel{sec3.1}\

\begin{defi}
    \begin{enumerate}
        \item Let $(A,\circ_{A})$ and
        $(A^{*},\circ_{A^{*}})$ be perm algebras. If there is a perm algebra
        structure $(A\oplus
        A^{*},\circ_{d})$ on $A\oplus A^{*}$
        containing $(A,\circ_{A})$ and $(
        A^{*},\circ_{A^{*}})$ as perm subalgebras, and the bilinear form $\mathcal{B}_{d}$ in \meqref{eq:bfds} is left-invariant on $(A\oplus
        A^{*},\circ_{d})$, then we say that
        $\big(  (  A\oplus
        A^{*},\circ_{d},\mathcal{B}_{d}),(A,\circ_{A}),(A^{*},\circ_{A^{*}})\big)
        $ is a {\bf Manin triple of perm algebras associated to the nondegenerate
            symmetric left-invariant bilinear form $\mathcal{B}_d$}.
        \item Let $(A,\triangleright_{A},\triangleleft_{A})$ and
        $(A^{*},\triangleright_{A^{*}},\triangleleft_{A^{*}})$ be \sdpps. If there is a
        quadratic \sdpp structure $(A\oplus
        A^{*},\triangleright_{d},\triangleleft_{d},\mathcal{B}_{d})$ on $A\oplus A^{*}$
        which contains $(A,\triangleright_{A},\triangleleft_{A})$ and $(
        A^{*},\triangleright_{A^{*}},\triangleleft_{A^{*}})$ as \sdppsubs, then we say
        $\big(  (  A\oplus
        A^{*},\triangleright_{d},\triangleleft_{d},\mathcal{B}_{d}),
        (A,\triangleright_{A},\triangleleft_{A}),$
        $(A^{*},\triangleright_{A^{*}},\triangleleft_{A^{*}})\big)
        $ is a \textbf{Manin triple of \sdpps} (associated to the nondegenerate symmetric invariant bilinear form $\mathcal{B}_d$).
    \end{enumerate}
\end{defi}

\begin{pro}\mlabel{pro:equ}
There is a one-to-one correspondence between
Manin triples of perm algebras associated to nondegenerate
symmetric left-invariant bilinear forms and Manin triples of
\sdpps.
\end{pro}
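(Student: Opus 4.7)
The plan is to leverage the one-to-one correspondence between perm algebras with nondegenerate symmetric left-invariant bilinear forms and quadratic \sdpps established in Proposition \mref{pro:330}, and then verify that this correspondence intertwines with the subalgebra data on both sides. The key geometric fact that makes everything work is that in any double construction with bilinear form $\mathcal{B}_d$, the subspaces $A$ and $A^*$ are totally isotropic of maximal dimension, i.e.\ $A^{\perp}=A$ and $(A^*)^{\perp}=A^*$ with respect to $\mathcal{B}_d$.

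For the forward direction, I would start with a Manin triple of perm algebras $\big((A\oplus A^*,\circ_d,\mathcal{B}_d),(A,\circ_A),(A^*,\circ_{A^*})\big)$. Proposition \mref{pro:330} applied to the ambient perm algebra $(A\oplus A^*,\circ_d,\mathcal{B}_d)$ produces a quadratic \sdpp $(A\oplus A^*,\triangleright_d,\triangleleft_d,\mathcal{B}_d)$ with $\triangleleft_d$ defined by \meqref{eq:cor4} and $\triangleright_d=\circ_d-\triangleleft_d$. The crucial step is to show that $A$ and $A^*$ are closed under $\triangleright_d$ and $\triangleleft_d$. For $x,y\in A$ and any $z\in A$, one has
$$\mathcal{B}_d(x\triangleleft_d y,z)=-\mathcal{B}_d(x,z\circ_d y)=-\mathcal{B}_d(x,z\circ_A y)=0,$$
using that $A$ is a perm subalgebra of $(A\oplus A^*,\circ_d)$ and that $\mathcal{B}_d$ vanishes on $A\times A$. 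Since $A^\perp=A$, this forces $x\triangleleft_d y\in A$, and then $x\triangleright_d y=x\circ_A y-x\triangleleft_d y\in A$ as well. The identical argument applies to $A^*$. Hence the \sdpp operations restrict, giving induced \sdpp structures $(A,\triangleright_A,\triangleleft_A)$ and $(A^*,\triangleright_{A^*},\triangleleft_{A^*})$ that are \sdppsubs of $(A\oplus A^*,\triangleright_d,\triangleleft_d)$, producing the desired Manin triple of \sdpps.

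For the converse direction, starting with a Manin triple of \sdpps, I would set $\circ:=\triangleright+\triangleleft$ on all three algebras. By Proposition \mref{pro:330} applied to the ambient quadratic \sdpp $(A\oplus A^*,\triangleright_d,\triangleleft_d,\mathcal{B}_d)$, the form $\mathcal{B}_d$ is left-invariant on the associated perm algebra $(A\oplus A^*,\circ_d)$. Since $A$ and $A^*$ are closed under both $\triangleright_d$ and $\triangleleft_d$, they are closed under $\circ_d$, and the restrictions coincide with $\circ_A$ and $\circ_{A^*}$ respectively; thus they are perm subalgebras. This gives a Manin triple of perm algebras associated to the nondegenerate symmetric left-invariant bilinear form $\mathcal{B}_d$.

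Finally, I would note that the two constructions are mutually inverse: this is immediate from the bijection in Proposition \mref{pro:330} applied at the level of the ambient space $A\oplus A^*$, together with the fact that the induced substructures on $A$ and $A^*$ are determined by restriction. The main obstacle is the closure verification in the forward direction; once this is confirmed via the Lagrangian (maximal isotropic) property of $A$ and $A^*$ under $\mathcal{B}_d$, the remainder of the argument is essentially a formal transcription of Proposition \mref{pro:330} at the subalgebra level.
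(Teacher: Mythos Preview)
Your proposal is correct and follows essentially the same approach as the paper: both directions invoke Proposition~\mref{pro:330} on the ambient space $A\oplus A^*$, and the closure of $A$ and $A^*$ under $\triangleleft_d$ is verified via the identical computation $\mathcal{B}_d(x\triangleleft_d y,z)=-\mathcal{B}_d(x,z\circ_d y)=0$ together with the isotropy $A^\perp=A$. Your explicit emphasis on the Lagrangian property and the mutual-inverse remark are slightly more detailed than the paper's terse version, but the underlying argument is the same.
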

\begin{proof}
Let $\big(  (  A\oplus
A^{*},\circ_{d},\mathcal{B}_{d}),(A,\circ_{A}),(A^{*},\circ_{A^{*}})\big)
$ be a Manin triple of perm algebras associated to the nondegenerate
    symmetric left-invariant bilinear form $\mathcal{B}_d$. Then  by Proposition \mref{pro:330}, there is a quadratic \sdpp  $(A\oplus A^{*},\triangleright_{d},\triangleleft_{d},\mathcal{B}_{d})$ with the multiplications
    $\triangleright_{d}$ and $\triangleleft_{d}$ defined respectively by
    \begin{eqnarray*}
    &&\mathcal{B}_{d}\big( (x+a^{*})\triangleleft_{d}(y+b^{*}) ,z+c^{*}\big)=-\mathcal{B}_{d}\big( x+a^{*}, (z+c^{*})\circ_{d}(y+b^{*}) \big),\\
    &&(x+a^{*})\triangleright_{d}(y+b^{*})=(x+a^{*})\circ_{d}(y+b^{*})-(x+a^{*})\triangleleft_{d}(y+b^{*}),\;\forall x,y,z\in A, a^{*},b^{*},c^{*}\in A^{*}.
    \end{eqnarray*}In particular, we have
\begin{eqnarray*}
    \mathcal{B}_{d}(x\triangleleft_{d}y,z)=-\mathcal{B}_{d}(x,z\circ_{d}y)=0,\;\;\forall
    x,y,z\in A.
\end{eqnarray*}
Hence we have $x\triangleleft_{d}y\in A$, and $x\triangleright_{d}y=x\circ_{d}y-x\triangleleft_{d}y=x\circ_{A}y-x\triangleleft_{d}y\in A$.
Thus $(A,\triangleright_{A},\triangleleft_{A}):=(A,\triangleright_{d}|_{A},\triangleleft_{d}|_{A})$ is a subalgebra of $(A\oplus A^{*},\triangleright_{d},\triangleleft_{d})$.
Similarly, $(A^{*},\triangleright_{A^{*}},\triangleleft_{A^{*}})$ is also a subalgebra of $(A\oplus A^{*},\triangleright_{d},\triangleleft_{d})$.
Hence $\big(  (  A\oplus
A^{*},\triangleright_{d},\triangleleft_{d},\mathcal{B}_{d}),(A,\triangleright_{A},\triangleleft_{A}),(A^{*},\triangleright_{A^{*}},\triangleleft_{A^{*}})\big)
$ is a  Manin triple of \sdpps.

Conversely, suppose that $\big(  (  A\oplus
A^{*},\triangleright_{d},\triangleleft_{d},\mathcal{B}_{d}),(A,\triangleright_{A},\triangleleft_{A}),(A^{*},\triangleright_{A^{*}},\triangleleft_{A^{*}})\big)
$ is a  Manin triple of \sdpps. Then $\big(  (  A\oplus
A^{*},\circ_{d},\mathcal{B}_{d}),(A,\circ_{A}),(A^{*},\circ_{A^{*}})\big)
$ is straightforwardly a Manin triple of perm algebras associated
to the nondegenerate symmetric left-invariant bilinear form $\mathcal{B}_d$, where $(A\oplus
A^{*},\circ_{d}),(A,\circ_{A})$ and $(A^{*},\circ_{A^{*}})$ are
the associated perm algebras of $(A\oplus
A^{*},\triangleright_{d},\triangleleft_{d})$,
$(A,\triangleright_{A},\triangleleft_{A})$ and
$(A^{*},\triangleright_{A^{*}},\triangleleft_{A^{*}})$
respectively.
\end{proof}

\begin{pro}\mlabel{pro:5.2}
Let $\big( (A\oplus
A^{*},\cdot_{d},P+Q^{*},\mathcal{B}_{d}),(A,\cdot_{A},P),(A^{*},\cdot_{A^{*}},Q^{*})
\big)$ be a double construction of averaging Frobenius commutative
algebra. Then there is a Manin triple of \sdpps $\big(  (
A\oplus
A^{*},\triangleright_{d},\triangleleft_{d},\mathcal{B}_{d}),(A,\triangleright_{A},\triangleleft_{A}),(A^{*},
\triangleright_{A^{*}}, \triangleleft_{A^{*}})\big)$, where
\begin{small}
\begin{eqnarray}
    && (x+a^{*})\triangleright_{d}(y+b^{*}):=(P+Q^{*})(x+a^{*})\cdot_{d}(y+b^{*})+(Q+P^{*})\big( (x+a^{*})\cdot_{d}(y+b^{*})\big),\mlabel{eq:dc1}\\
    &&(x+a^{*})\triangleleft_{d}(y+b^{*}):=-(Q+P^{*})\big( (x+a^{*})\cdot_{d}(y+b^{*})\big),\;\forall x,y\in A, a^{*},b^{*}\in A^{*}.\mlabel{eq:dc2}
\end{eqnarray}
\end{small}
\end{pro}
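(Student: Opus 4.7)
The plan is to reduce the proposition to a direct application of Corollary \ref{cor:2.14} on the big Frobenius commutative algebra $(A\oplus A^{*},\cdot_{d},\mathcal{B}_{d})$, and then show that $A$ and $A^{*}$ sit inside the resulting quadratic SAPP algebra as SAPP subalgebras.

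First, I would note that by hypothesis $(A\oplus A^{*},\cdot_{d},\mathcal{B}_{d})$ is a symmetric Frobenius commutative algebra and $P+Q^{*}$ is an averaging operator on $(A\oplus A^{*},\cdot_{d})$. A short calculation using the pairing in \meqref{eq:bfds} shows that the adjoint of $P+Q^{*}$ with respect to $\mathcal{B}_{d}$ is $Q+P^{*}$: for $x,y\in A$ and $a^{*},b^{*}\in A^{*}$,
\[
\mathcal{B}_{d}\bigl((P+Q^{*})(x+a^{*}),y+b^{*}\bigr)=\langle P(x),b^{*}\rangle+\langle Q^{*}(a^{*}),y\rangle=\langle x,P^{*}(b^{*})\rangle+\langle a^{*},Q(y)\rangle,
\]
which equals $\mathcal{B}_{d}(x+a^{*},(Q+P^{*})(y+b^{*}))$. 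Then Corollary \ref{cor:2.14} produces a quadratic SAPP algebra $(A\oplus A^{*},\triangleright_{d},\triangleleft_{d},\mathcal{B}_{d})$ whose multiplications are given by \meqref{eq:90} with $P$ replaced by $P+Q^{*}$ and $\widehat{P}$ by $Q+P^{*}$; rewriting these expressions yields exactly \meqref{eq:dc1} and \meqref{eq:dc2}.

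Next I would check that $(A,\triangleright_{A},\triangleleft_{A})$ and $(A^{*},\triangleright_{A^{*}},\triangleleft_{A^{*}})$ are subalgebras of the ambient SAPP algebra. By Theorem \ref{thm:2.11}, both $(A,\cdot_{A},P,Q)$ and $(A^{*},\cdot_{A^{*}},Q^{*},P^{*})$ are admissible averaging commutative algebras, so Proposition \ref{pro:com asso and SDPP} equips them with SAPP structures via \meqref{eq:com asso and SDPP}. For $x,y\in A$, one has $x\cdot_{d}y=x\cdot_{A}y\in A$, and since both $P+Q^{*}$ and $Q+P^{*}$ preserve $A$, the formulas \meqref{eq:dc1} and \meqref{eq:dc2} restrict to the operations $x\triangleright_{A}y=P(x)\cdot_{A}y+Q(x\cdot_{A}y)$ and $x\triangleleft_{A}y=-Q(x\cdot_{A}y)$ from Proposition \ref{pro:com asso and SDPP}; thus $A$ is an SAPP subalgebra, and the symmetric argument handles $A^{*}$.

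The main obstacle, such as it is, lies in matching up the two SAPP structures cleanly: the SAPP structure induced on $A$ by the ambient quadratic SAPP algebra (through the restriction of $\triangleright_{d},\triangleleft_{d}$) must coincide with the one coming intrinsically from $(A,\cdot_{A},P,Q)$ via Proposition \ref{pro:com asso and SDPP}. This compatibility, however, is essentially built into the setup once the adjoint identity $(P+Q^{*})^{\wedge}=Q+P^{*}$ is recognized, because both $P+Q^{*}$ and $Q+P^{*}$ are block diagonal with respect to the decomposition $A\oplus A^{*}$. Once these pieces are assembled, the resulting triple $\bigl((A\oplus A^{*},\triangleright_{d},\triangleleft_{d},\mathcal{B}_{d}),(A,\triangleright_{A},\triangleleft_{A}),(A^{*},\triangleright_{A^{*}},\triangleleft_{A^{*}})\bigr)$ is by definition a Manin triple of SAPP algebras.
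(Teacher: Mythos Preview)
Your proposal is correct and follows essentially the same approach as the paper's proof: identify the adjoint $\widehat{P+Q^{*}}=Q+P^{*}$, apply Corollary~\ref{cor:2.14} to obtain the quadratic \sdpp on $A\oplus A^{*}$ with the stated formulas, invoke Theorem~\ref{thm:2.11} and Proposition~\ref{pro:com asso and SDPP} to get the \sdpp structures on $A$ and $A^{*}$, and observe that these are subalgebras because the operators are block diagonal. Your write-up even supplies a bit more detail (the explicit adjoint computation and the restriction argument) than the paper's terse version.
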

\begin{proof}
The adjoint map of $P+Q^{*}$ with respect to $\mathcal{B}_{d}$ is
$Q+P^{*}$. Hence by Corollary \mref{cor:2.14}, there is a
quadratic \sdpp $( A\oplus
A^{*},\triangleright_{d},\triangleleft_{d},\mathcal{B}_{d} )$ with
$\triangleright_{d},\triangleleft_{d}$ defined by \meqref{eq:dc1}
and \meqref{eq:dc2} respectively. Moreover, by Theorem
\mref{thm:2.11}, $(A,\cdot_{A},P,Q)$ and
$(A^{*},\cdot_{A^{*}},Q^{*},P^{*})$ are admissible averaging
commutative algebras. Hence by Proposition
\mref{pro:com asso and SDPP}, there are \sdpps
$(A,\triangleright_{A},\triangleleft_{A})$ and
$(A^{*},\triangleright_{A^{*}},\triangleleft_{A^{*}})$, where $
\triangleright_{A}$ and $\triangleleft_{A} $ are defined by
\meqref{eq:com asso and SDPP}, and $ \triangleright_{A^{*}}$ and
$\triangleleft_{A^{*}} $ are defined by
\begin{eqnarray}
    a^{*}\triangleright_{A^{*}}b^{*}:=Q^{*}(a^{*})\cdot_{A^{*}}b^{*}+P^{*}(a^{*}\cdot_{A^{*}}b^{*}),\; a^{*}\triangleleft_{A^{*}}b^{*}:=-P^{*}(a^{*}\cdot_{A^{*}}b^{*}),\;\forall a^{*},b^{*}\in A^{*}, \mlabel{eq:mp re2}
\end{eqnarray}
which serve as \sdppsubs of $(A\oplus A^{*},\triangleright_{d},\triangleleft_{d})$.
Hence $\big((A\oplus A^{*},\triangleright_{d},\triangleleft_{d},\mathcal{B}_{d}),$
$(A,\triangleright_{A},\triangleleft_{A}),(A^{*},\triangleright_{A^{*}},
\triangleleft_{A^{*}})\big) $ %given by \meqref{eq:dc1} and \meqref{eq:dc2}
is a Manin triple of \sdpps.
\end{proof}

\begin{thm}\mlabel{thm:Manin triple}
Let
$(A,\triangleright_{A},\triangleleft_{A})$ and
$(A^{*},\triangleright_{A^{*}},\triangleleft_{A^{*}})$ be \sdpps,
and let their associated perm algebras be $(A,\circ_{A})$ and
$(A^{*},\circ_{A^{*}})$ respectively.
 Then the
following conditions are equivalent:
\begin{enumerate}
\item\mlabel{E4} There is a Manin triple $\big(  (  A\oplus
A^{*},\circ_{d},\mathcal{B}_{d}),(A,\circ_{A}),(A^{*},\circ_{A^{*}})\big)
$ of perm algebras associated to the nondegenerate symmetric
left-invariant bilinear form $\mathcal{B}_d$ such that the
compatible \sdpp $(A\oplus
A^{*},\triangleright_{d},\triangleleft_{d})$ induced from
$\mathcal{B}_{d}$ contains
$(A,\triangleright_{A},\triangleleft_{A})$ and
$(A^{*},\triangleright_{A^{*}},\triangleleft_{A^{*}})$ as
subalgebras. \item\mlabel{E1} There is a Manin triple
$\big((A\oplus
A^{*},\triangleright_{d},\triangleleft_{d},\mathcal{B}_{d}),(A,\triangleright_{A},\triangleleft_{A}),(A^{*},\triangleright_{A^{*}},
\triangleleft_{A^{*}})\big)$ of \sdpps. \item\mlabel{E2} There is
a perm algebra $(A\oplus A^{*},\circ_{d})$ with $ \circ_{d} $
defined by
\begin{equation}\mlabel{eq:A ds}
    (x+a^{*})\circ_{d}(y+b^{*})=x\circ_{A}y+\mathcal{L}^{*}_{\circ_{A^{*}}}(a^{*})y-\mathcal{L}^{*}_{\triangleleft_{A^{*}}}(b^{*})x+a^{*}\circ_{A^{*}}b^{*}+
    \mathcal{L}^{*}_{\circ_{A}}(x)b^{*}-\mathcal{L}^{*}_{\triangleleft_{A}}(y)a^{*},
\end{equation}
for all $x,y\in A, a^{*},b^{*}\in A^{*}$. \item\mlabel{E3} There
is a \sdpp  $(A\oplus A^{*},\triangleright_{d},\triangleleft_{d})$
with $\triangleright_{d}$ and $\triangleleft_{d}$ defined respectively by
\begin{eqnarray}
    (x+a^{*})\triangleright_{d}(y+b^{*})&=&\;x\triangleright_{A}y+(\mathcal{L}^{*}_{\circ_{A^{*}}}+\mathcal{R}^{*}_{\circ_{A^{*}}})(a^{*})y
    +\mathcal{R}^{*}_{\triangleright_{A^{*}}}(b^{*})x\nonumber\\
    &&\;+a^{*}\triangleright_{A^{*}}b^{*}+(\mathcal{L}^{*}_{\circ_{A}}+\mathcal{R}^{*}_{\circ_{A}})(x)b^{*}+\mathcal{R}^{*}_{\triangleright_{A}}(y)a^{*},\mlabel{eq:A ds1}\\
    (x+a^{*})\triangleleft_{d}(y+b^{*})&=&\;x\triangleleft_{A}y-\mathcal{R}^{*}_{\circ_{A^{*}}}(a^{*})y
    -\mathcal{R}^{*}_{\circ_{A^{*}}}(b^{*})x\nonumber\\
    &&\;+a^{*}\triangleleft_{A^{*}}b^{*}-\mathcal{R}^{*}_{\circ_{A }}(x)b^{*}
    -\mathcal{R}^{*}_{\circ_{A }}(y)a^{*},\mlabel{eq:A ds2}
\end{eqnarray}
for all $x,y\in A, a^{*},b^{*}\in A^{*}$.
\end{enumerate}
\end{thm}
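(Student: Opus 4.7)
The plan is to establish the four equivalences by first using the structural correspondence already proved in Proposition \ref{pro:equ}, then reducing the remaining implications to formula manipulations dictated by the uniqueness in Proposition \ref{pro:330}, leaving only one genuinely substantive step to verify.

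First, I would dispose of (\ref{E4}) $\Longleftrightarrow$ (\ref{E1}) by a direct appeal to Proposition \ref{pro:equ}, noting that the passage between a perm algebra Manin triple with a symmetric left-invariant form and a Manin triple of \sdpps is natural and preserves the subalgebras $A$ and $A^{*}$ on both sides. In particular, the \sdpp structure induced on $A\oplus A^{*}$ by $\mathcal{B}_{d}$ (via Proposition \ref{pro:330}) automatically has $A$ and $A^{*}$ as subalgebras in both directions of the correspondence.

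Next, I would handle (\ref{E2}) $\Longleftrightarrow$ (\ref{E3}) as essentially a bookkeeping step. Given (\ref{E2}), define $\circ_{d}:=\triangleright_{d}+\triangleleft_{d}$; then $(A\oplus A^{*},\circ_{d})$ is the associated perm algebra of the \sdpp in (\ref{E3}), and adding equations \eqref{eq:A ds1} and \eqref{eq:A ds2} produces exactly \eqref{eq:A ds} after noting the cancellations $\mathcal{L}^{*}_{\circ_{A^{*}}}+\mathcal{R}^{*}_{\circ_{A^{*}}}-\mathcal{R}^{*}_{\circ_{A^{*}}}=\mathcal{L}^{*}_{\circ_{A^{*}}}$ and $\mathcal{R}^{*}_{\triangleright_{A^{*}}}-\mathcal{R}^{*}_{\circ_{A^{*}}}=-\mathcal{R}^{*}_{\triangleleft_{A^{*}}}=-\mathcal{L}^{*}_{\triangleleft_{A^{*}}}$ (using the commutativity of $\triangleleft_{A^{*}}$). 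Conversely, given the perm algebra in (\ref{E3}), apply Proposition \ref{pro:330} to the nondegenerate symmetric left-invariant form $\mathcal{B}_{d}$ (whose left-invariance must be checked directly from \eqref{eq:A ds}) to obtain a unique compatible quadratic \sdpp structure; a direct computation using \eqref{eq:cor4} and \eqref{eq:succ} recovers the formulas \eqref{eq:A ds1} and \eqref{eq:A ds2}.

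For (\ref{E1}) $\Longleftrightarrow$ (\ref{E3}), assume (\ref{E1}) and note that the associated perm algebra $(A\oplus A^{*},\circ_{d})$ has $(A,\circ_{A})$ and $(A^{*},\circ_{A^{*}})$ as perm subalgebras. Writing $x\circ_{d}a^{*}=f(x)a^{*}+g(x)a^{*}$ with $f(x)a^{*}\in A^{*}$ and $g(x)a^{*}\in A$, the left-invariance of $\mathcal{B}_{d}$ together with $\mathcal{B}_{d}(A,A)=\mathcal{B}_{d}(A^{*},A^{*})=0$ and the invariance $\mathcal{B}_{d}(x\triangleleft_{d}y,z)=-\mathcal{B}_{d}(x,z\circ_{d}y)$ forces the cross terms to the dual-representation formulas in \eqref{eq:A ds}; the same analysis applies symmetrically on the $A^{*}$ side. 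Conversely, assuming (\ref{E3}), one verifies that \eqref{eq:A ds} defines a perm algebra containing $A$ and $A^{*}$ as subalgebras and that $\mathcal{B}_{d}$ is left-invariant; combined with (\ref{E3}) $\Longleftrightarrow$ (\ref{E2}) this gives (\ref{E1}).

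The main obstacle is the ``forward'' verification that the bilinear operations \eqref{eq:A ds} (or \eqref{eq:A ds1}--\eqref{eq:A ds2}) indeed satisfy the perm algebra (respectively \sdpp) axioms on $A\oplus A^{*}$. This reduces to a finite list of compatibility identities of the form ``$A$-$A^{*}$-$A$'', ``$A^{*}$-$A$-$A^{*}$'', etc., each of which must be translated into an identity between the representations $(\mathcal{L}^{*}_{\circ_{A}}, -\mathcal{L}^{*}_{\triangleleft_{A}}, A^{*})$ and $(\mathcal{L}^{*}_{\circ_{A^{*}}}, -\mathcal{L}^{*}_{\triangleleft_{A^{*}}}, A^{*})$ via the pairing. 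These are routine but lengthy, and I would organize them using the equivalent characterizations of \sdpps from Corollary \ref{cor:SDPP} (particularly items (\ref{s3}) and (\ref{s2})), together with Proposition \ref{pro:330}, so that the verification reduces to checking the cross compatibility conditions that will later be repackaged as the defining axioms of a \sdppb.
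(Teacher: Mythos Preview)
Your overall plan follows the paper's strategy closely: the paper proves $(\ref{E4})\Longleftrightarrow(\ref{E1})$ via Proposition~\ref{pro:equ}, then closes the cycle $(\ref{E1})\Rightarrow(\ref{E3})\Rightarrow(\ref{E2})\Rightarrow(\ref{E1})$ by computing the cross terms from invariance of $\mathcal{B}_d$, summing the two operations, and then checking left-invariance plus applying Proposition~\ref{pro:330}. Two issues are worth flagging.

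First, in your second paragraph the labels are swapped: when you write ``Given $(\ref{E2})$, define $\circ_d:=\triangleright_d+\triangleleft_d$'' you are actually assuming $(\ref{E3})$, and when you write ``given the perm algebra in $(\ref{E3})$'' you mean $(\ref{E2})$. This is cosmetic but should be fixed.

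Second, and more substantively, your final ``main obstacle'' paragraph misreads the statement. Each of the four conditions is of the form ``there \emph{is} a perm algebra (respectively \sdpp) on $A\oplus A^*$ with the specified formulas''; the existence of that structure is the \emph{hypothesis}, not something to be verified. Thus in the implication $(\ref{E2})\Rightarrow(\ref{E1})$ you may assume $(A\oplus A^*,\circ_d)$ is already a perm algebra and you only need to check that $\mathcal{B}_d$ is left-invariant on it (this the paper calls ``straightforward''), then invoke Proposition~\ref{pro:330} and check that $A$ and $A^*$ are closed under the induced $\triangleright_d,\triangleleft_d$. Likewise $(\ref{E1})\Rightarrow(\ref{E3})$ assumes the Manin triple exists and merely identifies the formulas via \eqref{eq:cor3}--\eqref{eq:cor4}. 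The lengthy verification that \eqref{eq:A ds} actually defines a perm algebra---i.e., the cross compatibility identities you describe---is \emph{not} part of this theorem; it is precisely the content of Theorem~\ref{thm:2-2} (via Lemma~\ref{lem:mp}), where those identities are repackaged as the \sdppb axioms. So your plan is correct once you drop the unnecessary final step.
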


\begin{proof}
(\mref{E4})$\Longleftrightarrow$(\mref{E1}). It follows from
Proposition \mref{pro:equ}.

(\mref{E1})$\Longrightarrow$(\mref{E3}). Let $x,y\in A$ and
$ a^{*},b^{*}\in A^{*}$. By the assumption, we have
\begin{eqnarray*}
\mathcal{B}_{d}(x\triangleright_{d} b^{*},y)&\overset{\meqref{eq:cor3}}{=}&\mathcal{B}
_{d}(b^{*},x\circ_{A} y+y\circ_{A} x)=%\langle b^{*},x\succ_{A} y-y\succ_{A} x\rangle\\
\mathcal{B}_{d}\big((\mathcal{L}
^{*}_{\circ_{A}}+\mathcal{R}
^{*}_{\circ_{A}})(x)b^{*},y\big),\\
\mathcal{B}_{d}(x\triangleright_{d} b^{*},a^{*})&\overset{\meqref{eq:cor3},\meqref{eq:cor4}}{=}&
\mathcal{B}_{d}(x,a^{*}\triangleright_{A^{*}}b^{*})=\langle x,
a^{*}\triangleright_{A^{*}}b^{*}\rangle=\mathcal{B}_{d}\big(\mathcal{R}
^{*}_{\triangleright_{A^{*}}}(b^{*})x,a^{*}\big).
\end{eqnarray*}
Thus we have
\[
x\triangleright_{d} b^{*}=(\mathcal{L}
^{*}_{\circ_{A}}+\mathcal{R}
^{*}_{\circ_{A}})(x)b^{*}+\mathcal{R}
^{*}_{\triangleright_{A^{*}}}(b^{*})x,
\]
and similarly
\[
a^{*}\triangleright_{d} y=(\mathcal{L}
^{*}_{\circ_{A^{*}}}+\mathcal{R}
^{*}_{\circ_{A^{*}}})(a^{*})y+\mathcal{R}
^{*}_{\triangleright_{A}}(y)a^{*}.
\]
Hence \meqref{eq:A ds1} holds. Moreover, we have
\begin{eqnarray*}
\mathcal{B}_{d}(x\triangleleft_{d} b^{*},y)&\overset{\meqref{eq:cor4}}{=}&-\mathcal{B}
_{d}(b^{*},y\circ_{A} x)=-\langle \mathcal{R}^{*}_{\circ_{A}}(x)b^{*},y\rangle=-\mathcal{B}_{d}\big(\mathcal{R}^{*}_{\circ_{A}}(x)b^{*},y\big), \\
\mathcal{B}_{d}(x\triangleleft_{d} b^{*},a^{*})&\overset{\meqref{eq:cor4}}{=}&
-\mathcal{B}_{d}(x,a^{*}\circ_{A^{*}}b^{*})=-\langle \mathcal{R}^{*}_{
    \circ_{A^{*}}} (b^{*})x, a^{*}\rangle=-\mathcal{B}_{d}\big(\mathcal{R}^{*}_{
    \circ_{A^{*}}}(b^{*})x, a^{*} \big).
\end{eqnarray*}
Thus we have
\[
x\triangleleft_{d} b^{*}=-\mathcal{R}
^{*}_{\circ_{A}} (x)b^{*}-\mathcal{R}^{*}_{
\circ_{A^{*}}} (b^{*})x.
\]
Hence \meqref{eq:A ds2} holds.

(\mref{E3})$\Longrightarrow$(\mref{E2}). It is straightforward.

(\mref{E2})$\Longrightarrow$(\mref{E1}). Suppose that there is a
perm algebra $(A\oplus A^{*},\circ_{d})$ with $\circ_{d}$ defined
by \meqref{eq:A ds}. Then it is straightforward to check that
$\mathcal{B}_{d}$ is left-invariant on $(A\oplus
A^{*},\circ_{d})$. By Proposition \mref{pro:330}, there is a
quadratic \sdpp $(A\oplus
A^{*},\triangleright_{d},\triangleleft_{d},\mathcal{B}_{d})$ with
$\triangleright_{d},\triangleleft_{d}$ defined by
 \meqref{eq:cor3} and \meqref{eq:cor4} respectively. Moreover, let $x,y,z\in
A$. Then we have
\begin{equation*}
\mathcal{B}_{d}(x\triangleleft_{d}y,z)=-\mathcal{B}_{d}(x,z\circ_{d}y)=-\mathcal{B}_{d}(x,z\circ_{A}y)=0,
\end{equation*}
which indicates $x\triangleleft_{d}y\in A$. Similarly we have
$$x\triangleright_{d}y\in A,\;a^{*}\triangleright_{d}b^{*}\in A^{*},\;a^{*}\triangleleft_{d}b^{*}\in A^{*},\;\forall x,y\in A, a^{*},b^{*}\in A^{*}.$$
Hence $(A,\triangleright_{A}=\triangleright_{d}|_{A})$ and
$(A^{*},\triangleright_{A^{*}}=\triangleright_{d}|_{A^{*}})$ are subalgebras of
$(A\oplus A^{*},\triangleright_{d},\triangleleft_{d})$. Therefore $\big((A\oplus
A^{*},\triangleright_{d},\triangleleft_{d},\mathcal{B}_{d}),(A,\triangleright_{A},\triangleleft_{A}),
(A^{*},\triangleright_{A^{*}},\triangleleft_{A^{*}})\big) $ is a Manin triple of
\sdpps.
\end{proof}

Recall the notion of Manin triples of Lie algebras associated to commutative $2$-cocycles.

\begin{defi}\mcite{LB2024}
Let $(A,[-,-]_{A})$ and $(A^{*},[-,-]_{A^{*}})$ be Lie algebras.
If there is a Lie algebra structure $(A\oplus A^{*},[-,-]_{d})$ on
$A\oplus A^{*}$ containing $(A,[-,-]_{A})$ and
$(A^{*},[-,-]_{A^{*}})$ as Lie subalgebras, and the bilinear form $\mathcal{B}_{d}$ in \meqref{eq:bfds} is
a commutative $2$-cocycle on $(A\oplus A^{*},[-,-]_{d})$, then we
call $\big((A\oplus
A^{*},[-,-]_{d},\mathcal{B}_{d}),(A,[-,-]_{A}),(A^{*},[-,-]_{A^{*}})\big)$
a {\bf Manin triple of Lie algebras associated to the commutative
$2$-cocycle $\mathcal{B}_d$}.
\end{defi}

\begin{pro}\mlabel{pro:antiManin}
Let $\big(  (  A\oplus
A^{*},\circ_{d},\mathcal{B}_{d}),(A,\circ_{A}),(A^{*},\circ_{A^{*}})\big)
$ be a Manin triple of perm algebras associated to the nondegenerate
    symmetric left-invariant bilinear form $\mathcal{B}_d$, and let the sub-adjacent Lie algebras of
$(A\oplus A^{*},\circ_{d})$, $(A,\circ_{A})$ and
$(A^{*},\circ_{A^{*}})$ be $ (A\oplus A^{*},[-,-]_{d}
),(A,[-,-]_{A})$ and $(A^{*},[-,-]_{A^{*}})$ respectively. Then
$\big((A\oplus A^{*},[-,-]_{d},\mathcal{B}_{d})$, $(A,[-,-]_{A})$,
$(A^{*},[-,-]_{A^{*}})\big)$ is a Manin triple of Lie algebras
associated to the commutative $2$-cocycle $\mathcal{B}_d$.
\end{pro}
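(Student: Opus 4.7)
The plan is to reduce the statement directly to Proposition~\ref{pro:4.6} together with functoriality of the sub-adjacent Lie algebra construction with respect to subalgebras. By hypothesis, $\mathcal{B}_{d}$ is a nondegenerate symmetric left-invariant bilinear form on the perm algebra $(A\oplus A^{*},\circ_{d})$. Applying Proposition~\ref{pro:4.6} to $(A\oplus A^{*},\circ_{d})$ immediately yields that $\mathcal{B}_{d}$ is a commutative $2$-cocycle on the sub-adjacent Lie algebra $(A\oplus A^{*},[-,-]_{d})$. This takes care of the $2$-cocycle condition.

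Next I would check that $(A,[-,-]_{A})$ and $(A^{*},[-,-]_{A^{*}})$ sit inside $(A\oplus A^{*},[-,-]_{d})$ as Lie subalgebras. Since $(A,\circ_{A})$ and $(A^{*},\circ_{A^{*}})$ are by assumption perm subalgebras of $(A\oplus A^{*},\circ_{d})$, the Lie brackets defined via $[x,y]=x\circ y-y\circ x$ satisfy $[-,-]_{d}|_{A\otimes A}=[-,-]_{A}$ and $[-,-]_{d}|_{A^{*}\otimes A^{*}}=[-,-]_{A^{*}}$, so both are Lie subalgebras. Nondegeneracy of $\mathcal{B}_{d}$ on $A\oplus A^{*}$ is unchanged since the bilinear form itself is unchanged. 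Combining these observations produces the desired Manin triple of Lie algebras associated to the commutative $2$-cocycle $\mathcal{B}_{d}$.

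There is essentially no obstacle here: the content of the proposition is that the assignment ``perm algebra $\mapsto$ sub-adjacent Lie algebra'' transports the structure of a Manin triple with left-invariant bilinear form to that of a Manin triple with commutative $2$-cocycle, and Proposition~\ref{pro:4.6} already delivers the nontrivial half (the $2$-cocycle identity~\eqref{eq:c2c}) from left-invariance. The only thing to mention is that symmetry of $\mathcal{B}_{d}$, which is built into the definition~\eqref{eq:bfds}, is what allows Proposition~\ref{pro:4.6} to apply. The proof is therefore a short invocation of earlier results rather than a fresh computation.
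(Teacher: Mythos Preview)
Your proposal is correct and follows essentially the same approach as the paper: invoke Proposition~\ref{pro:4.6} to obtain the commutative $2$-cocycle condition on $(A\oplus A^{*},[-,-]_{d})$, and observe that perm subalgebras pass to Lie subalgebras under the sub-adjacent construction. The paper's proof is the same two-line argument, just stated more tersely.
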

\begin{proof}
It is straightforward to check that $(A,[-,-]_{A} )$ and $(A^{*},[-,-]_{A^{*}})$ are Lie subalgebras of $ (A\oplus A^{*},[-,-]_{d} )$.
By Propositions  \mref{pro:4.6}, $\mathcal{B}_{d}$ is a commutative $2$-cocycle on  $(A\oplus A^{*},[-,-]_{d})$. Hence the conclusion follows.
\end{proof}

\subsection{Special apre-perm bialgebras}\mlabel{sec3.2}\

Recall that a {\bf perm coalgebra} \mcite{Hou,LZB} is a vector
space $A$ with a co-multiplication $\eta:A\rightarrow A\otimes A$
such that
\begin{eqnarray}
(  \eta\otimes\mathrm{id})  \eta(  x)  &=&(
\mathrm{id}\otimes\eta)  \eta(  x),\mlabel{eq:co2}\\
(\mathrm{id}\otimes\eta)  \eta(  x) &=&(\tau\otimes\mathrm{id})(
\eta\otimes\mathrm{id})  \eta(  x),\;\forall x\in A.\mlabel{eq:co3}
\end{eqnarray}

Now we introduce the notion of a special apre-perm coalgebra.

\begin{defi}
Let $A$ be a vector space with co-multiplications $\vartheta,\theta:A\rightarrow A\otimes A$, and $\eta=\vartheta+\theta$.
If $(A,\eta)$ is a perm coalgebra and the following equations hold:
\begin{eqnarray}
\theta(x)&=&\tau\theta(x),\mlabel{eq:co1} \\
(  \eta\otimes\mathrm{id})  \theta(  x)  &=&(
\mathrm{id}\otimes\theta)  \eta(  x),\mlabel{eq:co4}\\
(\mathrm{id}\otimes\theta)  (\eta+\theta)(x) &=&0,\;\forall x\in A,\mlabel{eq:co5}
\end{eqnarray}
then we say $(A, \vartheta,\theta)$ is a {\bf special apre-perm
coalgebra}.
\end{defi}

\begin{pro}\mlabel{lem:co}
Let $A$ be a vector space and $\vartheta,\theta :A\rightarrow A\otimes A$ be co-multiplications.
Let $\triangleright_{A^{*}},\triangleleft_{A^{*}}:A^{*}\otimes A^{*}\rightarrow A^{*}$ be the linear duals of $\vartheta$ and $\theta$ respectively, that is, the following equations hold:
\begin{equation}
\langle a^{*}\triangleright_{A^{*}}b^{*},x\rangle=\langle a^{*}\otimes b^{*},\vartheta(x)\rangle,\;\langle a^{*}\triangleleft_{A^{*}}b^{*},x\rangle=\langle a^{*}\otimes b^{*},\theta(x)\rangle,\;\forall x\in A, a^{*},b^{*}\in A^{*}.
\end{equation}
Then $(A^{*},\triangleright_{A^{*}},\triangleleft_{A^{*}})$ is a
\sdpp if and only if $(A,\vartheta,\theta )$ is a special
apre-perm coalgebra.
\end{pro}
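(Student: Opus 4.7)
The plan is to prove this as a routine dualization, using the characterization of \sdpps provided in Corollary~\mref{cor:SDPP}\meqref{s4}: $(A^{*},\triangleright_{A^{*}},\triangleleft_{A^{*}})$ is a \sdpp if and only if (i) $\triangleleft_{A^{*}}$ is commutative, (ii) $(A^{*},\circ_{A^{*}})$ with $\circ_{A^{*}}=\triangleright_{A^{*}}+\triangleleft_{A^{*}}$ is a perm algebra, and (iii) the identity $(x\circ_{A^{*}}y)\triangleleft_{A^{*}}z=x\circ_{A^{*}}(y\triangleleft_{A^{*}}z)=-x\triangleleft_{A^{*}}(y\triangleleft_{A^{*}}z)$ from \meqref{eq:SDPP} holds. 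I will match each of these three conditions to one of the four coalgebra identities \meqref{eq:co1}--\meqref{eq:co5}, with the two halves of \meqref{eq:SDPP} corresponding respectively to \meqref{eq:co4} and \meqref{eq:co5}.

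First, I would observe that since $\circ_{A^{*}}$ is the linear dual of $\eta=\vartheta+\theta$, the commutativity of $\triangleleft_{A^{*}}$ is immediately equivalent to $\theta=\tau\theta$, i.e.\ \meqref{eq:co1}. Next, pairing $\langle a^{*}\otimes b^{*}\otimes c^{*},\,(\eta\otimes\mathrm{id})\eta(x)\rangle=\langle (a^{*}\circ_{A^{*}}b^{*})\circ_{A^{*}}c^{*},x\rangle$ and the analogous identities involving $(\mathrm{id}\otimes\eta)\eta(x)$ and $(\tau\otimes\mathrm{id})(\eta\otimes\mathrm{id})\eta(x)$ shows that \meqref{eq:co2} and \meqref{eq:co3} together are equivalent to the perm algebra axioms on $(A^{*},\circ_{A^{*}})$.

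For the last condition, I would compute for all $x\in A$ and $a^{*},b^{*},c^{*}\in A^{*}$ the three pairings
\begin{eqnarray*}
\langle (a^{*}\circ_{A^{*}}b^{*})\triangleleft_{A^{*}}c^{*},x\rangle &=& \langle a^{*}\otimes b^{*}\otimes c^{*},(\eta\otimes\mathrm{id})\theta(x)\rangle,\\
\langle a^{*}\circ_{A^{*}}(b^{*}\triangleleft_{A^{*}}c^{*}),x\rangle &=& \langle a^{*}\otimes b^{*}\otimes c^{*},(\mathrm{id}\otimes\theta)\eta(x)\rangle,\\
\langle -a^{*}\triangleleft_{A^{*}}(b^{*}\triangleleft_{A^{*}}c^{*}),x\rangle &=& -\langle a^{*}\otimes b^{*}\otimes c^{*},(\mathrm{id}\otimes\theta)\theta(x)\rangle.
\end{eqnarray*}
The equality of the first two pairings for all $a^{*},b^{*},c^{*},x$ is exactly \meqref{eq:co4}, while the equality of the second and third is exactly \meqref{eq:co5}. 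Since $A$ is finite-dimensional, all these dualizations are genuine equivalences.

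Assembling the three matchings via Corollary~\mref{cor:SDPP}\meqref{s4} yields the desired biconditional. No step is a serious obstacle; the only care needed is to keep track of the tensor-factor order and of the sign in the passage from \meqref{eq:SDPP} to \meqref{eq:co5}, which is why I would write out the three pairings above explicitly before stringing the equivalences together.
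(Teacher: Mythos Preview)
Your proposal is correct and follows essentially the same approach as the paper: both invoke Corollary~\mref{cor:SDPP} and match the three algebraic conditions (commutativity of $\triangleleft_{A^{*}}$, perm axioms for $\circ_{A^{*}}$, and \meqref{eq:SDPP}) to the coalgebra identities \meqref{eq:co1}, the pair \meqref{eq:co2}--\meqref{eq:co3}, and the pair \meqref{eq:co4}--\meqref{eq:co5} respectively via dualization. Your version is slightly more explicit in writing out the three pairings for the last equivalence, which is helpful, but the strategy is identical.
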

\begin{proof}
Let $\eta=\vartheta+\theta$ and the linear dual of $\eta$ be $\circ_{A^{*}}$.
From a straightforward computation, we have
\begin{enumerate}
\item $(A,\eta)$ is a perm coalgebra if and only if $(A^{*},\circ_{A^{*}})$ is a perm algebra.
\item \meqref{eq:co1} holds if and only if $\triangleleft_{A^{*}}$ is commutative.
\item \meqref{eq:co4} and \meqref{eq:co5} hold if and only if \meqref{eq:SDPP} holds on $A^{*}$.
\end{enumerate}
Hence the conclusion follows from Corollary \mref{cor:SDPP}.
\end{proof}

Now we give the notion of a \sdppb.

\begin{defi}
Let $(A,\triangleright_{A},\triangleleft_{A})$ be a \sdpp and let
$(A,\vartheta,\theta)$ be a special apre-perm coalgebra satisfying
the following equations
\begin{eqnarray}
\eta(  x\circ_{A}y)  &=&\big(  \mathcal{L}_{\circ_{A}}(
x)  \otimes\mathrm{id}\big)  \eta(  y)  -\big(
\mathrm{id}\otimes\mathcal{R}_{\circ_{A}}(  y)  \big)
\theta(  x),  \mlabel{eq:bialg1}\\
\eta(  x\circ_{A}y)  &=&
\big(  \mathrm{id}\otimes\mathcal{R}_{\circ_{A}}(  y)  \big)  \eta(  x)-\big(
\mathcal{L}_{\triangleleft_{A}}(  x)  \otimes\mathrm{id}\big)
\eta(  y) ,\mlabel{eq:bialg2}\\
\eta(  x\circ_{A}y)  &=&\big(  \mathrm{id}\otimes\mathcal{L}%
_{\circ_{A}}(  x)  \big)  \eta(  y)  +\big(
\mathcal{L}_{\triangleleft_{A}}(  y)  \otimes\mathrm{id}\big)
\theta(  x),\mlabel{eq:bialg3}\\
\eta(  x\triangleleft_{A}y)&=&\big(  \mathrm{id}\otimes\mathcal{L}_{\triangleleft_{A}}(  x)  \big)
\eta(  y)  +\tau\big(  \mathrm{id}\otimes\mathcal{L}_{\triangleleft_{A}
}(  y)  \big)  \eta(  x), \mlabel{eq:bialg4}\\
\eta(  x\triangleleft_{A}y)  &=&\tau\eta(  x\triangleleft_{A}y),\mlabel{eq:bialg5}\\
\theta(  x\circ_{A}y)&=&\big(
\mathrm{id}\otimes\mathcal{L}_{\circ_{A}}(  x)  \big)
\theta(  y)  +\big(  \mathcal{L}_{\circ_{A}}(  y)
\otimes\mathrm{id}\big)  \theta(  x),\mlabel{eq:bialg6}\\
\theta(  x\circ_{A}y)  &=&\theta(  y\circ_{A}x), \quad \forall x,
y\in A, \mlabel{eq:bialg7}
\end{eqnarray}
where $\eta=\vartheta+\theta$. Then we say that
$(A,\triangleright_{A},\triangleleft_{A},\vartheta,\theta)$ is a
\textbf{\sdppb.}
\end{defi}

\begin{lem}\mcite{Hou,LZB}\mlabel{lem:mp}
Let $(A,\circ_{A})$ and $(B,\circ_{B})$ be two perm algebras, and
$l_{A},r_{A}:A\rightarrow \mathrm{End}_{\mathbb K}(B)$ and
$l_{B},r_{B}:B\rightarrow\mathrm{End}_{\mathbb K}(A)$ be linear
maps. Then there is a perm algebra structure $(A\oplus B,\circ_{d})$ on $A\oplus B$ with $\circ_{d}$ defined
by
\begin{equation}
(x+a)\circ_{d}(y+b)=x\circ_{A}y+l_{B}(a)y+r_{B}(b)x+a\circ_{B}b+l_{A}(x)b+r_{A}(y)a,\;\forall x,y\in A, a,b\in B
\end{equation}
if and only if $(l_{A},r_{A},B)$ and $(l_{B},r_{B},A)$ are representations of $(A,\circ_{A})$ and $(B,\circ_{B})$ respectively, and satisfy the following equations$:$
\begin{eqnarray}
l_{_{A}}(  x)  (  a\circ_{B}b)  &=&l_{_{A}}(
x)  a\circ_{B}b+l_{_{A}}(  r_{_{B}}(  a)  x)  b,\mlabel{eq:mp1}\\
l_{_{A}}(  x)  (  a\circ_{B}b)&=&r_{_{A}}(  x)  a\circ_{B}b+l_{_{A}}(
l_{_{B}}(  a)  x)  b,\mlabel{eq:mp2}\\
l_{_{A}}(  x)  (  a\circ_{B}b)&=&a\circ_{B}l_{_{A}}(  x)  b+r_{_{A}}(
r_{_{B}}(  b)  x)  a,\mlabel{eq:mp3}\\
r_{_{A}}(  x)  (  a\circ_{B}b)&=&a\circ_{B}r_{_{A}%
}(  x)  b+r_{_{A}}(  l_{_{B}}(  b)  x)  a,\mlabel{eq:mp4}\\
r_{_{A}}(  x)  (  a\circ_{B}b)  &=&r_{_{A}}(
x)  (  b\circ_{B}a),\mlabel{eq:mp5}\\
l_{_{B}}(  a)  (  x\circ_{A}y)  &=&l_{_{B}}(
a)  x\circ_{A}y+l_{_{B}}(  r_{_{A}}(  x)  a)  y,
\mlabel{eq:mp6}\\
l_{_{B}}(  a)  (  x\circ_{A}y)&=&r_{_{B}}(  a)  x\circ_{A}y+l_{_{B}}(
l_{_{A}}(  x)  a)  y,\mlabel{eq:mp7}\\
l_{_{B}}(  a)  (  x\circ_{A}y)&=&x\circ_{A}l_{_{B}}(  a)  y+r_{_{B}}(
r_{_{A}}(  y)  a)  x,\mlabel{eq:mp8}\\
r_{_{B}}(  a)  (  x\circ_{A}y)  &=&x\circ_{A}r_{_{B}%
}(  a)  y+r_{_{B}}(  l_{_{A}}(  y)  a)  x,\mlabel{eq:mp9}\\
r_{_{B}}(  a)  (  x\circ_{A}y)  &=&r_{_{B}}(
a)  (  y\circ_{A}x).\mlabel{eq:mp10}
\end{eqnarray}
\end{lem}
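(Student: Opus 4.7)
The statement is a standard matched-pair-of-algebras type result for perm algebras, and the proof plan is entirely a direct verification in which the key move is to organize the calculation by bi-grading. I would expand each of the two perm algebra identities
\[
(x+a)\circ_d\bigl((y+b)\circ_d(z+c)\bigr)=\bigl((x+a)\circ_d(y+b)\bigr)\circ_d(z+c)=\bigl((y+b)\circ_d(x+a)\bigr)\circ_d(z+c)
\]
for arbitrary $x,y,z\in A$ and $a,b,c\in B$, and then project onto the $A$-component and the $B$-component of $A\oplus B$. Since $\circ_d$ is bilinear in each slot, every term in a fully expanded expression contains exactly three of the symbols from $\{x,y,z,a,b,c\}$, and we may label it by the number $k\in\{0,1,2,3\}$ of symbols drawn from $B$.

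The heart of the plan is to observe that the equation between the three triple products decouples completely by this bi-grading, so that the total identity is equivalent to the conjunction of its pieces for $k=0,1,2,3$. I would first handle the extreme values: the $k=0$ piece on the $A$-side gives precisely the perm axiom for $(A,\circ_A)$, and the $k=3$ piece on the $B$-side gives the perm axiom for $(B,\circ_B)$; the cross pieces at these extremes vanish automatically because the relevant mixed maps are absent. Next, for $k=1$ (one symbol from $B$), I set $b=c=0$ and then successively $a=c=0$ and $a=b=0$; collecting the $B$-components yields the defining identities \eqref{eq:rep1}--\eqref{eq:rep2} of $(l_A,r_A,B)$ being a representation of $(A,\circ_A)$, while the $A$-components yield analogous identities for $(l_B,r_B,A)$ on $(B,\circ_B)$ after the symmetric choice $x=y=0$ in the $k=2$ case.

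The mixed conditions \eqref{eq:mp1}--\eqref{eq:mp10} arise from the genuinely mixed cases. Concretely, I would derive \eqref{eq:mp1}--\eqref{eq:mp5} by setting $y=z=0$ and $x\ne0$, so that $X=x+a,\ Y=b,\ Z=c$, and then reading off the $B$-component of each of the three associativity/left-commutativity identities; the three identities together produce the five equations \eqref{eq:mp1}--\eqref{eq:mp5}, where \eqref{eq:mp1}--\eqref{eq:mp4} come from the associativity $\circ_d(\circ_d)=(\circ_d)\circ_d$ split into the four possible placements of $x$ among the three slots of the triple product, and \eqref{eq:mp5} comes from the left-commutativity axiom. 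The symmetric block \eqref{eq:mp6}--\eqref{eq:mp10} is obtained by the dual choice $b=c=0$ and $a\ne 0$, reading off the $A$-component.

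Conversely, once the representation axioms and \eqref{eq:mp1}--\eqref{eq:mp10} are assumed, the same bi-graded bookkeeping shows that every projected piece of the two perm axioms holds, so $(A\oplus B,\circ_d)$ is a perm algebra. The only real obstacle is organizational rather than conceptual: there are a large number of terms to keep track of (roughly eighteen per triple product, times three triple products, times two components), and ensuring that each of the ten mixed identities appears exactly once and exactly where expected requires a careful table of types $(k,\text{component},\text{placement of the singled-out element})$. I would present this bookkeeping by treating each choice of which among $\{x,y,z,a,b,c\}$ is nonzero as a separate lemma-sized computation, and then assemble the ten equations by inspection.
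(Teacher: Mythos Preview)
The paper does not supply a proof of this lemma: it is quoted from the references \mcite{Hou,LZB} and used as a black box in the proof of Theorem~\mref{thm:2-2}. So there is no ``paper's own proof'' to compare against, only the standard matched-pair verification that those references carry out.

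Your plan is exactly that standard verification and is correct in outline. A couple of small organizational points are worth tightening. First, when you write ``$X=x+a,\ Y=b,\ Z=c$'' you are mixing the $k=2$ and $k=3$ pieces; it is cleaner to take $X=x$, $Y=b$, $Z=c$ (and then $X=a$, $Y=x$, $Z=b$, and $X=a$, $Y=b$, $Z=x$) so that each computation sits in a single bi-degree. Second, your phrase ``the four possible placements of $x$ among the three slots'' is a slip: there are three placements of the lone $A$-element, and the two perm identities (associativity and left-commutativity) applied to each placement are what produce the five equations \eqref{eq:mp1}--\eqref{eq:mp5} on the $B$-side; specifically, $x$ in the first slot gives \eqref{eq:mp1} and \eqref{eq:mp2}, $x$ in the second slot gives \eqref{eq:mp3} together with a redundancy, and $x$ in the third slot gives \eqref{eq:mp4} and \eqref{eq:mp5}. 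The symmetric block \eqref{eq:mp6}--\eqref{eq:mp10} arises in the same way from the $k=1$ case on the $A$-side. Finally, note that for $k=1$ the $A$-component yields \eqref{eq:mp6}--\eqref{eq:mp10}, not the representation axioms for $(l_B,r_B,A)$; those come from the $A$-component at $k=2$, as you indicate parenthetically. With these clarifications your bookkeeping goes through and the equivalence is a routine (if lengthy) term-by-term match.
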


\begin{thm}\mlabel{thm:2-2}
Let $(A,\triangleright_{A},\triangleleft_{A})$ and
$(A^{*},\triangleright_{A^{*}},\triangleleft_{A^{*}})$ be \sdpps, and let the linear
maps $\vartheta,\theta:A\rightarrow A\otimes A$ be the linear
duals of $\triangleright_{A^{*}}$ and $\triangleleft_{A^{*}}$ respectively. Then
$(A,\triangleright_{A},\triangleleft_{A},\vartheta,\theta)$ is a \sdppb if and only if there is a perm algebra $(   A\oplus A^{*},\circ_{d})$ with $\circ_{d}$
defined by \meqref{eq:A ds}.
\end{thm}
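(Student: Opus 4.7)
The plan is to apply Lemma \ref{lem:mp} to the direct sum $A\oplus A^*$ with the representation data read off from \meqref{eq:A ds}, namely
\[
l_A=\mathcal{L}^*_{\circ_A},\; r_A=-\mathcal{L}^*_{\triangleleft_A},\; l_{A^*}=\mathcal{L}^*_{\circ_{A^*}},\; r_{A^*}=-\mathcal{L}^*_{\triangleleft_{A^*}}.
\]
First I would observe that the triple $(l_A,r_A,A^*)$ is a representation of the perm algebra $(A,\circ_A)$ precisely because $(A,\triangleright_A,\triangleleft_A)$ is a \sdpp, by the equivalence (\mref{s1})$\Leftrightarrow$(\mref{s3}) of Corollary \mref{cor:SDPP}; and likewise $(l_{A^*},r_{A^*},A)$ is a representation of $(A^*,\circ_{A^*})$ because $(A^*,\triangleright_{A^*},\triangleleft_{A^*})$ is a \sdpp. (Via Proposition \mref{lem:co}, this second fact is equivalent to $(A,\vartheta,\theta)$ being a special apre-perm coalgebra, which explains why \meqref{eq:co1}--\meqref{eq:co5} appear inside the definition of a \sdppb.) Thus by Lemma \mref{lem:mp}, the existence of a perm algebra structure $(A\oplus A^*,\circ_d)$ with $\circ_d$ given by \meqref{eq:A ds} is equivalent to the ten compatibility equations \meqref{eq:mp1}--\meqref{eq:mp10} holding for these specific maps.

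The key step is then a dualization dictionary: I would pair each $u^*\otimes v^*\in A^*\otimes A^*$ (resp.\ $x\otimes y\in A\otimes A$) with the appropriate triple input to turn each of \meqref{eq:mp1}--\meqref{eq:mp10} into a statement about $\eta=\vartheta+\theta$ and $\theta$ acting on $A$. For instance, \meqref{eq:mp1} with $l_A=\mathcal{L}^*_{\circ_A}$, $r_B=-\mathcal{L}^*_{\triangleleft_{A^*}}$ on $A^*$, and the identification $\mathcal{L}^*_{\triangleleft_{A^*}}(b^*)x$ with pairing against $\theta$, should become \meqref{eq:bialg3}; \meqref{eq:mp5} expresses $\theta(x\circ_A y)=\theta(y\circ_A x)$, i.e.\ \meqref{eq:bialg7}; \meqref{eq:mp10} dualizes to the symmetry \meqref{eq:bialg5}; the relations \meqref{eq:mp6}--\meqref{eq:mp9} (which are the $A^*$-side mirrors of \meqref{eq:mp1}--\meqref{eq:mp4}) translate to \meqref{eq:bialg1}, \meqref{eq:bialg2}, \meqref{eq:bialg4}, \meqref{eq:bialg6}; and the remaining ones yield either repeats or consequences that collapse onto the listed seven identities. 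I would organize this as a small table, converting each $\mathcal{L}^*_{\bullet}$ back through the defining relations
\[
\langle a^*\otimes b^*,\vartheta(x)\rangle=\langle a^*\triangleright_{A^*}b^*,x\rangle,\qquad \langle a^*\otimes b^*,\theta(x)\rangle=\langle a^*\triangleleft_{A^*}b^*,x\rangle,
\]
so that expressions like $\mathcal{L}^*_{\circ_{A^*}}(a^*)x$ and $\mathcal{L}^*_{\triangleleft_{A^*}}(b^*)x$ become the left/right components of $\eta(x)$ and $\theta(x)$ respectively, and verifying each identity reduces to testing against an arbitrary pair $a^*\otimes b^*$.

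The hard part is purely bookkeeping: showing that the ten conditions \meqref{eq:mp1}--\meqref{eq:mp10} collapse to exactly the seven identities \meqref{eq:bialg1}--\meqref{eq:bialg7}, with no extra conditions and no missing ones. This requires using the axioms of a \sdpp on both $A$ and $A^*$ (via Corollary \mref{cor:SDPP}) to rewrite $\mathcal{L}^*_{\circ_A}=\mathcal{L}^*_{\triangleright_A}+\mathcal{L}^*_{\triangleleft_A}$ and to detect redundancies: several of \meqref{eq:mp1}--\meqref{eq:mp4} (respectively \meqref{eq:mp6}--\meqref{eq:mp9}) become equivalent once one uses the perm-algebra identities and the commutativity of $\triangleleft_A$ (respectively $\triangleleft_{A^*}$), and a few follow formally from others plus the coalgebra identities \meqref{eq:co2}--\meqref{eq:co5}. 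Conversely, once the seven bialgebra equations are in hand, reading the dualization dictionary in the other direction recovers all of \meqref{eq:mp1}--\meqref{eq:mp10}, finishing the equivalence.
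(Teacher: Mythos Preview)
Your overall strategy is exactly the paper's: plug $l_A=\mathcal{L}^*_{\circ_A}$, $r_A=-\mathcal{L}^*_{\triangleleft_A}$, $l_B=\mathcal{L}^*_{\circ_{A^*}}$, $r_B=-\mathcal{L}^*_{\triangleleft_{A^*}}$ into Lemma~\mref{lem:mp}, note that the representation hypotheses hold by Corollary~\mref{cor:SDPP} (and Proposition~\mref{lem:co} for the coalgebra side), and then dualize \meqref{eq:mp1}--\meqref{eq:mp10} against $a^*\otimes b^*$ or $x\otimes y$ to obtain \meqref{eq:bialg1}--\meqref{eq:bialg7}.

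However, your dictionary has several entries wrong and your explanation of the redundancy is off. Carrying out the computation as in your sample for \meqref{eq:mp1}, one finds $\eta(x\circ_A y)=\big(\mathcal{L}_{\circ_A}(x)\otimes\mathrm{id}\big)\eta(y)-\big(\mathrm{id}\otimes\mathcal{R}_{\circ_A}(y)\big)\theta(x)$, which is \meqref{eq:bialg1}, not \meqref{eq:bialg3}. Likewise \meqref{eq:mp5} dualizes to $\eta(x\triangleleft_A y)=\tau\eta(x\triangleleft_A y)$, i.e.\ \meqref{eq:bialg5}, while it is \meqref{eq:mp10} that gives $\theta(x\circ_A y)=\theta(y\circ_A x)$, i.e.\ \meqref{eq:bialg7}; you have these two swapped. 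The full correspondence the paper establishes is
\[
\meqref{eq:mp1}\Leftrightarrow\meqref{eq:bialg1}\Leftrightarrow\meqref{eq:mp7},\quad
\meqref{eq:mp2}\Leftrightarrow\meqref{eq:bialg2}\Leftrightarrow\meqref{eq:mp6},\quad
\meqref{eq:mp3}\Leftrightarrow\meqref{eq:bialg3}\Leftrightarrow\meqref{eq:mp8},
\]
together with $\meqref{eq:mp4}\Leftrightarrow\meqref{eq:bialg4}$, $\meqref{eq:mp5}\Leftrightarrow\meqref{eq:bialg5}$, $\meqref{eq:mp9}\Leftrightarrow\meqref{eq:bialg6}$, $\meqref{eq:mp10}\Leftrightarrow\meqref{eq:bialg7}$. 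The collapse from ten conditions to seven does \emph{not} require the \sdpp axioms or the coalgebra identities \meqref{eq:co2}--\meqref{eq:co5} as you suggest: each of the pairs $(\meqref{eq:mp1},\meqref{eq:mp7})$, $(\meqref{eq:mp2},\meqref{eq:mp6})$, $(\meqref{eq:mp3},\meqref{eq:mp8})$ dualizes to the \emph{same} equation on $A$ directly, simply because these pairs are $A\leftrightarrow B$ mirrors of each other in Lemma~\mref{lem:mp} and the specific choice of maps respects that symmetry. So the ``hard part'' you flag is in fact a clean one-to-one (with three coincidences) dictionary, with no extra algebraic reductions needed.
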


\begin{proof}
Since $(A,\triangleright_{A},\triangleleft_{A})$ and
$(A^{*},\triangleright_{A^{*}},\triangleleft_{A^{*}})$ are \sdpps,
it follows that $(\mathcal{L}^{*}_{\circ_{A}},-\mathcal{L}
^{*}_{\triangleleft_{A}},$ $A^{*})$ and
$(\mathcal{L}^{*}_{\circ_{A^{*}}},-\mathcal{L}^{*}_{\triangleleft_{A^{*}}},A)$
are representations of the associated perm algebras
$(A,\circ_{A})$ and $ (A^{*},\circ_{A^{*}})$ respectively, and
\meqref{eq:co1}-\meqref{eq:co5} hold by Proposition \mref{lem:co}.
Let $\eta=\vartheta+\theta$ which is the linear dual of
$\circ_{A^{*}}$. For all $x,y\in A, a^{*},b^{*}\in A^{*}$, we have
\begin{eqnarray*}
&&\langle \mathcal{L}_{\circ_{A}}^{*}( x) (
a^{*}\circ_{A^{*}}b^{*}),y\rangle=\langle a^{*}\circ_{A^{*}}b^{*},
x\circ_{A}y\rangle =\langle a^{*}\otimes b^{*},\eta( x\circ_{A}y) \rangle,
\\
&&\langle \mathcal{L}^{*}_{\circ_{A}}(x)a^{*}\circ_{A^{*}}b^{*},y\rangle=
\langle\mathcal{L}^{*}_{\circ_{A}}(x)a^{*}\otimes
b^{*},\eta(y)\rangle=\langle a^{*}\otimes b^{*},\big( \mathcal{L}
_{\circ_{A}}(x)\otimes\mathrm{id} \big)\eta(y)\rangle, \\
&&\langle\mathcal{L}^{*}_{\circ_{A}}\big( -\mathcal{L}^{*}_{
   \triangleleft_{A^{*}}}(a^{*})x\big)b^{*},y\rangle=\langle b^{*},-\mathcal{L}
^{*}_{\triangleleft_{A^{*}}}(a^{*})x\circ_{A}y\rangle=\langle \mathcal{R}
^{*}_{\circ_{A}}(y)b^{*},-\mathcal{L}^{*}_{\triangleleft_{A^{*}}}(a^{*})x\rangle \\
&&=-\langle a^{*}\triangleleft_{A^{*}}\mathcal{R}^{*}_{\circ_{A}}(y)b^{*},x\rangle=
-\langle a^{*}\otimes\mathcal{R}^{*}_{\circ_{A}}(y)b^{*},\theta(x)\rangle=
-\langle a^{*}\otimes b^{*},\big( \mathrm{id}\otimes\mathcal{R}
_{\circ_{A}}(y) \big)\theta(x)\rangle.
\end{eqnarray*}
Thus \meqref{eq:bialg1} holds if and only if \meqref{eq:mp1} holds for $l_{A}=
\mathcal{L}^{*}_{\circ_{A}},r_{A}=-\mathcal{L}^{*}_{\triangleleft_{A}}, l_{B}=
\mathcal{L}^{*}_{\circ_{A^{*}}},r_{B}=-\mathcal{L}^{*}_{\triangleleft_{A^{*}}}. $
Similarly, we have
\begin{eqnarray*}
&&\meqref{eq:bialg1}\Longleftrightarrow\meqref{eq:mp7},\; \meqref{eq:mp2}%
\Longleftrightarrow\meqref{eq:bialg2}\Longleftrightarrow\meqref{eq:mp6},\; %
\meqref{eq:mp3}\Longleftrightarrow\meqref{eq:bialg3}\Longleftrightarrow\meqref{eq:mp8},
\\
&&\meqref{eq:bialg4}\Longleftrightarrow\meqref{eq:mp4},\; \meqref{eq:bialg5}%
\Longleftrightarrow\meqref{eq:mp5},\; \meqref{eq:bialg6}\Longleftrightarrow%
\meqref{eq:mp9},\;
\meqref{eq:bialg7}\Longleftrightarrow\meqref{eq:mp10},
\end{eqnarray*}
where $l_{A}=\mathcal{L}^{*}_{\circ_{A}},r_{A}=-\mathcal{L}^{*}_{\triangleleft_{A}},
l_{B}=\mathcal{L}^{*}_{\circ_{A^{*}}},r_{B}=-\mathcal{L}^{*}_{\triangleleft_{A^{*}}}
$. Hence the conclusion follows from Lemma \mref{lem:mp}.
\end{proof}

Combining Theorems \mref{thm:Manin triple} and \mref{thm:2-2}, we have the following result.

\begin{cor}\mlabel{cor:5.8}
Let $(A,\triangleright_{A},\triangleleft_{A})$ and
$(A^{*},\triangleright_{A^{*}},\triangleleft_{A^{*}})$ be \sdpps.
Then the following conditions are equivalent.
\begin{enumerate}
    \item There is a Manin triple $\big(  (  A\oplus
    A^{*},\circ_{d},\mathcal{B}_{d}),(A,\circ_{A}),(A^{*},\circ_{A^{*}})\big)
    $ of perm algebras associated to the nondegenerate
    symmetric left-invariant bilinear form $\mathcal{B}_d$ such that the compatible \sdpp $(A\oplus A^{*},\triangleright_{d},\triangleleft_{d})$ induced from $\mathcal{B}_{d}$ contains $(A,\triangleright_{A},\triangleleft_{A})$ and
    $(A^{*},\triangleright_{A^{*}},\triangleleft_{A^{*}})$ as subalgebras.
    \item There is a Manin triple $\big(  (  A\oplus
    A^{*},\triangleright_{d},\triangleleft_{d},\mathcal{B}_{d}),(A,\triangleright_{A},\triangleleft_{A}),(A^{*},\triangleright_{A^{*}},\triangleleft_{A^{*}})\big)
    $ of \sdpps.
    \item
    There is a \sdpp  $(A\oplus A^{*},\triangleright_{d},\triangleleft_{d})$ with
$\triangleright_{d},\triangleleft_{d}$ defined by
    \eqref{eq:A ds1} and \eqref{eq:A ds2} respectively.
    \item $(A,\triangleright_{A},\triangleleft_{A},\vartheta,\theta)$ is a \sdppb, where $\vartheta,\theta:A\rightarrow A\otimes A$ are the linear
    duals of $\triangleright_{A^{*}}$ and $\triangleleft_{A^{*}}$ respectively.
\end{enumerate}
\end{cor}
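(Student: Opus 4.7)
The plan is to derive Corollary \ref{cor:5.8} as a direct chaining of Theorems \ref{thm:Manin triple} and \ref{thm:2-2}, with essentially no new computation required. Theorem \ref{thm:Manin triple} already establishes the mutual equivalence of the first three listed conditions, and moreover supplies an additional equivalent condition: the existence of a perm algebra structure $(A \oplus A^*, \circ_d)$ with multiplication $\circ_d$ given explicitly by \eqref{eq:A ds}. To complete the proof it thus suffices to identify this intermediate perm algebra condition with condition (4), namely that $(A, \triangleright_A, \triangleleft_A, \vartheta, \theta)$ is a \sdppb. But this last equivalence is precisely the content of Theorem \ref{thm:2-2}, applied under the stated identification that $\vartheta$ and $\theta$ are the linear duals of $\triangleright_{A^*}$ and $\triangleleft_{A^*}$ respectively.

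The only point that warrants brief verification is that the representations implicit in the two theorems agree. Corollary \ref{cor:SDPP} characterizes a \sdpp $(A, \triangleright_A, \triangleleft_A)$ by the fact that $(\mathcal{L}^*_{\circ_A}, -\mathcal{L}^*_{\triangleleft_A}, A^*)$ is a representation of the associated perm algebra $(A, \circ_A)$, and dually for $(A^*, \triangleright_{A^*}, \triangleleft_{A^*})$; these are exactly the representations appearing both in the formula \eqref{eq:A ds} for $\circ_d$ and in the compatibility equations verified in the proof of Theorem \ref{thm:2-2}. Hence the two equivalences splice together cleanly, and there is no serious technical obstacle: the expected difficulty is merely organizational, namely aligning the labels of the equivalent conditions across the two theorems so that the fourfold equivalence of the corollary reads off immediately. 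The corollary is in essence a bookkeeping statement that packages the Manin-triple viewpoint of conditions (1)--(3) with the bialgebra viewpoint of condition (4).
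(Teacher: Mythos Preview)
Your proposal is correct and matches the paper's own approach exactly: the paper proves the corollary in one line by saying ``Combining Theorems~\ref{thm:Manin triple} and~\ref{thm:2-2}, we have the following result.'' Your expanded explanation---that Theorem~\ref{thm:Manin triple} gives the equivalence of (1), (2), (3) together with the auxiliary perm algebra condition~\eqref{eq:A ds}, and that Theorem~\ref{thm:2-2} then identifies~\eqref{eq:A ds} with (4)---is precisely the intended chaining, and the check that the representations line up via Corollary~\ref{cor:SDPP} is harmless and correct.
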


\begin{pro}\mlabel{pro:5.9}
Let $(A,\cdot_{A},\Delta,P,Q)$ be an averaging commutative and cocommutative infinitesimal bialgebra.
Then there is a \sdppb $(A,\triangleright_{A},\triangleleft_{A},\vartheta,\theta)$, where $\triangleright_{A},\triangleleft_{A}$ are defined by \meqref{eq:com asso and SDPP} and $\vartheta,\theta$ are defined by
\begin{equation}\mlabel{eq:co ao}
\vartheta(x)=(Q\otimes\mathrm{id})\Delta(x)+\Delta(Px),\; \theta(x)=-\Delta(Px),\;\forall x\in A.
\end{equation}
\end{pro}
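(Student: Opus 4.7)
The plan is to chain together the three equivalences already established in the paper, namely Theorem~\mref{thm:2.11}, Proposition~\mref{pro:5.2}, and Corollary~\mref{cor:5.8}, and then verify that the resulting co-multiplications match the formulas in \meqref{eq:co ao}.

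First, I would invoke Theorem~\mref{thm:2.11} to translate the averaging commutative and cocommutative infinitesimal bialgebra $(A,\cdot_{A},\Delta,P,Q)$ into the corresponding double construction of averaging Frobenius commutative algebra $\big((A\oplus A^{*},\cdot_{d},P+Q^{*},\mathcal{B}_{d}),(A,\cdot_{A},P),(A^{*},\cdot_{A^{*}},Q^{*})\big)$, where $(A^{*},\cdot_{A^{*}})$ is the commutative algebra whose multiplication is the linear dual of $\Delta$.

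Next, I would apply Proposition~\mref{pro:5.2} to obtain a Manin triple of \sdpps $\big((A\oplus A^{*},\triangleright_{d},\triangleleft_{d},\mathcal{B}_{d}),(A,\triangleright_{A},\triangleleft_{A}),(A^{*},\triangleright_{A^{*}},\triangleleft_{A^{*}})\big)$. Here $\triangleright_{A},\triangleleft_{A}$ are already precisely the multiplications in \meqref{eq:com asso and SDPP}, i.e.\ those given in the statement, and $\triangleright_{A^{*}},\triangleleft_{A^{*}}$ are given by \meqref{eq:mp re2}. Then by Corollary~\mref{cor:5.8}, this Manin triple corresponds to a \sdppb $(A,\triangleright_{A},\triangleleft_{A},\vartheta,\theta)$ with $\vartheta,\theta:A\rightarrow A\otimes A$ being the linear duals of $\triangleright_{A^{*}}$ and $\triangleleft_{A^{*}}$, respectively.

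It remains to verify that these duals are exactly the ones stated in \meqref{eq:co ao}. For $\theta$, using the definition of $\cdot_{A^{*}}$ as the dual of $\Delta$, for all $x\in A$ and $a^{*},b^{*}\in A^{*}$ we have
\[
\langle a^{*}\triangleleft_{A^{*}}b^{*},x\rangle=-\langle P^{*}(a^{*}\cdot_{A^{*}}b^{*}),x\rangle=-\langle a^{*}\otimes b^{*},\Delta(Px)\rangle,
\]
so $\theta(x)=-\Delta(Px)$. For $\vartheta$, splitting the two summands of $a^{*}\triangleright_{A^{*}}b^{*}=Q^{*}(a^{*})\cdot_{A^{*}}b^{*}+P^{*}(a^{*}\cdot_{A^{*}}b^{*})$, the first pairs with $x$ to give $\langle a^{*}\otimes b^{*},(Q\otimes\mathrm{id})\Delta(x)\rangle$ and the second to give $\langle a^{*}\otimes b^{*},\Delta(Px)\rangle$, yielding $\vartheta(x)=(Q\otimes\mathrm{id})\Delta(x)+\Delta(Px)$, as required.

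There is no real obstacle once one has the machinery of the preceding sections at hand; the argument is essentially a bookkeeping check that the formulas produced by the Manin triple construction agree with those in \meqref{eq:co ao}. An alternative, more laborious route would be to verify \meqref{eq:bialg1}--\meqref{eq:bialg7} and \meqref{eq:co1}--\meqref{eq:co5} directly from \meqref{eq:bib}, \meqref{eq:aver op}, \meqref{eq:ao pair}, \meqref{eq:aoco1}, \meqref{eq:aoco2}, but the above pipeline avoids this by reusing the equivalences already proved.
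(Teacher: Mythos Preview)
Your proof is correct but takes a different route from the paper's primary argument. The paper first computes the duals $\vartheta,\theta$ exactly as you do, but then verifies \meqref{eq:bialg1} directly by expanding both sides using \meqref{eq:bib} and the admissibility condition \meqref{eq:aoco2}, and declares \meqref{eq:bialg2}--\meqref{eq:bialg7} to follow similarly; you instead pass through the Manin triple via Proposition~\mref{pro:5.2} and invoke Corollary~\mref{cor:5.8} to obtain the bialgebra structure without touching the seven compatibility identities individually. Your pipeline is precisely the alternative the paper itself flags in Remark~\mref{rmk:bialgebra}. What your approach buys is economy: all the work is delegated to equivalences already proved, and the only fresh computation is the short pairing check that the duals of $\triangleright_{A^{*}},\triangleleft_{A^{*}}$ match \meqref{eq:co ao}. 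The paper's direct verification, by contrast, makes the role of \meqref{eq:aoco1}--\meqref{eq:aoco2} in each bialgebra axiom more visible, at the cost of additional case-by-case bookkeeping.
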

\begin{proof}
By Theorem \mref{thm:2.11} and Proposition \mref{pro:com asso and
SDPP}, $(A,\triangleright_{A},\triangleleft_{A})$ with $\triangleright_{A},\triangleleft_{A}$ defined by \meqref{eq:com asso
and SDPP} and
$(A^{*},\triangleright_{A^{*}},$
$\triangleleft_{A^{*}})$ with $ \triangleright_{A^{*}},\triangleleft_{A^{*}} $ defined by \meqref{eq:mp re2} respectively are \sdpps. The
linear duals $\vartheta,\theta:A\rightarrow A\otimes A$ of
$\triangleright_{A^{*}}$ and $\triangleleft_{A^{*}}$ are given by
\begin{eqnarray*}
    &&\langle \theta(x), a^{*}\otimes b^{*}\rangle=\langle x, a^{*}\triangleleft_{A^{*}}b^{*}\rangle\overset{\meqref{eq:mp re2}}{=}-\langle x, P^{*}(a^{*} \cdot_{A^{*}}b^{*})\rangle=-\langle  \Delta(Px),a^{*}\otimes b^{*}\rangle,\\
    &&\langle \vartheta(x), a^{*}\otimes b^{*}\rangle=\langle x, a^{*}\triangleright_{A^{*}}b^{*}\rangle\overset{\meqref{eq:mp re2}}{=}\langle x, Q^{*}(a^{*})\cdot_{A^{*}}b^{*}+P^{*}(a^{*} \cdot_{A^{*}}b^{*})\rangle\\
    &&=\langle (Q\otimes\mathrm{id})\Delta(x)+\Delta(Px),a^{*}\otimes b^{*}\rangle,\;\;\forall x\in A, a^{*},b^{*}\in A^{*},
\end{eqnarray*}that is, \meqref{eq:co ao} holds. By Proposition \mref{lem:co}, $(A,\vartheta,\theta)$ is a special apre-perm
coalgebra.
Let $\eta=\vartheta+\theta$.
%\cm{Let $\eta=\vartheta+\theta$.}
By \meqref{eq:perm
from aver op}, \meqref{eq:com asso and SDPP} and \meqref{eq:mp
re2}, we have
\begin{eqnarray*} \mathcal{L}_{\triangleright_{A}}(y)=\mathcal{L}_{\cdot_{A}}\big(P(y)\big)+Q\mathcal{L}_{\cdot_{A}}(y),\; \mathcal{R}_{\triangleright_{A}}(y)=\mathcal{L}_{\cdot_{A}}(y)P+Q\mathcal{L}_{\cdot_{A}}(y),\;\mathcal{L}_{\triangleleft_{A}}(y)=-Q\mathcal{L}_{\cdot_{A}}(y),
\end{eqnarray*}
for all $y\in A$. Therefore we have
\begin{eqnarray*}
    \eta(x\circ_{A}y)&=&(Q\otimes\mathrm{id})\Delta(x\circ_{A}y)\overset{\meqref{eq:perm from aver op}}{=}(Q\otimes\mathrm{id})\Delta\big(P(x)\cdot_{A}y\big)\\
    &\overset{\meqref{eq:bib}}{=}&(Q\otimes\mathrm{id})\bigg(\Big(\mathcal{L}_{\cdot_{A}}\big(P(x)\big)\otimes\mathrm{id}\Big)\Delta(y)
    +\big(\mathrm{id}\otimes\mathcal{L}_{\cdot_{A}}(y)\big)\Delta\big(P(x)\big)\bigg),\\
    \big(\mathcal{L}_{\circ_{A}} (x)\otimes\mathrm{id}\big)\eta(y)
    &=&\Big(\mathcal{L}_{\cdot_{A}}\big(P(x)\big)\otimes\mathrm{id}\Big)(Q\otimes\mathrm{id})\Delta(y)=(Q\otimes\mathrm{id})\Big(\mathcal{L}_{\cdot_{A}}\big(P(x)\big)\otimes\mathrm{id}\Big)\Delta(y),\\
    -\big(\mathrm{id}\otimes\mathcal{R}_{\circ_{A}}(y)\big)\theta(x)
    &=&\big(\mathrm{id}\otimes\mathcal{L}_{\cdot_{A}}(y)P\big)\Delta\big(P(x)\big)=\big(\mathrm{id}\otimes\mathcal{L}_{\cdot_{A}}(y)\big)(\mathrm{id}\otimes P)\Delta\big(P(x)\big)\\
    &\overset{\meqref{eq:aoco2}}{=}&\big(\mathrm{id}\otimes\mathcal{L}_{\cdot_{A}}(y)\big)(Q\otimes\mathrm{id})\Delta\big(P(x)\big)=(Q\otimes\mathrm{id})\big(\mathrm{id}\otimes\mathcal{L}_{\cdot_{A}}(y)\big)\Delta\big(P(x)\big).
\end{eqnarray*}
Thus \meqref{eq:bialg1} holds. Similarly, \meqref{eq:bialg2}-\meqref{eq:bialg7} hold. Therefore  $(A,\triangleright_{A},\triangleleft_{A},\vartheta,\theta)$ is a \sdppb.
\end{proof}

\begin{rmk}\label{rmk:bialgebra}
The fact that an averaging commutative and cocommutative
infinitesimal bialgebra induces a \sdppb can
also be observed from Proposition \mref{pro:5.2} by the
equivalence between Manin triples and bialgebras. Moreover, such
observation is also helpful in proving that a \sdppb
gives rise to an anti-pre-Lie bialgebra, since there is a
one-to-one correspondence between anti-pre-Lie bialgebras and
Manin triples of Lie algebras associated to commutative
$2$-cocycles (\cite[Corollary 2.16]{LB2024}).
\end{rmk}

This implication and the related structures can be summarized in the following diagram.

\vspace{-.4cm}
{\tiny
    \begin{equation} \label{eq:rbdiag}
\begin{split}
      \xymatrix{
         \txt{Double constructions \\of averaging \\Frobenius commutative\\ algebras}
            \ar@{=>}[r]^-{{\rm Prop.}~\ref{pro:5.2}}
            \ar@{<=>}[d]^-{{\rm Thm.}~\ref{thm:2.11}}
              & \txt{Manin triples of\\ \sdpps}
              \ar@<.4ex>@{<=>}[r]^-{{\rm Prop.}~\ref{pro:equ}}
              & \txt{Manin triples of \\ perm algebras \\ associated to \\ the nondegenerate \\ symmetric left-invariant \\ bilinear forms}
              \ar@{=>}[r]^-{{\rm Prop.}~\ref{pro:antiManin}}
              \ar@{<=>}[d]^-{{\rm Cor.}~\ref{cor:5.8}}
              & \txt{Manin triples of \\ Lie algebras\\
             associated to \\ commutative \\ $2$-cocycles} \ar@{<=>}[d]^-{\text{\tiny ~\cite[Cor. 2.16]{LB2024}}}
             \\
            \txt{averaging commutative\\ and cocommutative \\ infinitesimal bialgebras}
            \ar@{=>}[rr]^-{{\rm Prop.}~\ref{pro:5.9} }
            & & \txt{special \\ apre-perm \\ bialgebras}
            \ar@{=>}[r]
            & \txt{anti-pre-Lie\\ bialgebras}}
\end{split}
\end{equation}
}

\noindent{\bf Acknowledgments.} This work is supported by NSFC
(12271265, 12261131498,  W2412041, 12401031), the Postdoctoral Fellowship
Program of CPSF under Grant Number GZC20240755, 2024T005TJ and
2024M761507,
the Fundamental Research Funds for the Central Universities and
Nankai Zhide Foundation.

\noindent
{\bf Declaration of interests. } The authors have no conflict of interest to declare that are relevant to this article.

\noindent
{\bf Data availability. } Data sharing is not applicable to this article as no data were created or analyzed.

\vspace{-.2cm}

\end{document}